\definecolor{gr}{rgb}   {0.,   0.69,   0.23 }
\definecolor{bl}{rgb}   {0.,   0.5,   1. }
\definecolor{mg}{rgb}   {0.85,  0.,    0.85}
\definecolor{yl}{rgb}   {0.8,  0.7,   0.}
\definecolor{or}{rgb}  {0.7,0.2,0.2}
\tikzset{
	ddot/.style={circle,fill=white,draw=black,inner sep=0pt,minimum size=0.8mm},
	>=stealth,
	}
\tikzset{
	ddot2/.style={circle,fill=black,draw=black,inner sep=0pt,minimum size=0.8mm},
	>=stealth,
	}
\newtheorem{theorem}{Theorem} [section]
\newtheorem{lemma}[theorem]{Lemma}
\newtheorem{proposition}[theorem]{Proposition}
\newtheorem{remark}[theorem]{Remark}
\newtheorem{corollary}[theorem]{Corollary}
\DeclareMathOperator*{\supp}{supp}
\newcommand{\1}{\hspace{0.5mm}\text{I}\hspace{0.2mm}}
\newcommand{\II}{\text{I \hspace{-2.8mm} I} }
\newcommand{\III}{\text{I \hspace{-2.9mm} I \hspace{-2.9mm} I}}
\newcommand{\IV}{\text{I \hspace{-2.9mm} V}}
\newcommand{\I}{\mathcal{I}}
\newcommand{\noi}{\noindent}
\newcommand{\Z}{\mathbb{Z}}
\newcommand{\R}{\mathbb{R}}
\newcommand{\C}{\mathbb{C}}
\newcommand{\T}{\mathbb{T}}
\newcommand{\bul}{\bullet}
\let\Re=\undefined\DeclareMathOperator*{\Re}{Re}
\let\Im=\undefined\DeclareMathOperator*{\Im}{Im}
\let\P= \undefined
\newcommand{\P}{\mathbf{P}}
\newcommand{\PP}{\mathbb{P}}
\newcommand{\Q}{\mathbf{Q}}
\newcommand{\E}{\mathbb{E}}
\renewcommand{\L}{\mathcal{L}}
\newcommand{\F}{\mathcal{F}}
\newcommand{\al}{\alpha}
\newcommand{\be}{\beta}
\newcommand{\dl}{\delta}
\newcommand{\nb}{\nabla}
\newcommand{\Dl}{\Delta}
\newcommand{\eps}{\varepsilon}
\newcommand{\kk}{\kappa}
\newcommand{\g}{\gamma}
\newcommand{\G}{\Gamma}
\newcommand{\ld}{\lambda}
\newcommand{\Ld}{\Lambda}
\newcommand{\s}{\sigma}
\newcommand{\ft}{\widehat}
\newcommand{\Ft}{{\mathcal{F}}}
\newcommand{\wt}{\widetilde}
\newcommand{\cj}{\overline}
\newcommand{\dt}{\partial_t}
\newcommand{\ta}{\theta}
\newcommand{\ze}{\zeta}
\renewcommand{\l}{\ell}
\newcommand{\om}{\omega}
\renewcommand{\O}{\Omega}
\newcommand{\U}{\Theta}
\newcommand{\les}{\lesssim}
\newcommand{\ges}{\gtrsim}
\newcommand{\jb}[1]
{\langle #1 \rangle}
\newcommand{\ind}{\mathbf 1}
\newcommand{\V}{\mathcal{V}}
\newcommand{\D}{\mathcal{D}}
\newcommand{\N}{\mathbb{N}}
\renewcommand{\H}{\mathcal{H}}
\newtheorem*{ackno}{Acknowledgements}
\newcommand{\too}{\longrightarrow}
\newcommand{\rhoo}{\vec{\rho}}
\newcommand{\deff}{\stackrel{\textup{def}}{=}}
\newcommand{\Ph}{\P_\eps^{\textup{high}}}
\newcommand{\Pl}{\P_\eps^{\textup{low}}}
\newcommand{\Phh}{\P_\eps^{\textup{high}, \theta}}
\newcommand{\Pll}{\P_\eps^{\textup{low}, \theta}}
\numberwithin{equation}{section}
\numberwithin{theorem}{section}
\begin{document}

\title[On the convergence of the 2-$d$ singular SNLW]
{Smoluchowski-Kramers approximation for singular stochastic wave
equations in two dimensions}

\author[Y.~Zine]
{Younes Zine}


\address{
Younes Zine\\
School of Mathematics\\
The University of Edinburgh\\
and The Maxwell Institute for the Mathematical Sciences\\
James Clerk Maxwell Building\\
The King's Buildings\\
Peter Guthrie Tait Road\\
Edinburgh\\ 
EH9 3FD\\
United Kingdom\\
and Chair of Probability and PDEs\\
EPFL SB MATH\\
PROPDE\\
MA C2 595\\
CH-1015 Lausanne\\
Switzerland}

\email{younes.zine@epfl.ch}

\subjclass[2020]{35K15,60H15,58J35}

\keywords{stochastic nonlinear wave equation; nonlinear wave equation; 
damped nonlinear wave equation; 
renormalization; white noise}

\begin{abstract}We study a family of nonlinear damped wave equations indexed by a parameter $\eps >0$ and forced by a space-time white noise on the two dimensional torus, with polynomial and sine nonlinearities. We show that as $\eps \to 0$, the solutions to these equations converge to the solution of the corresponding two dimensional stochastic quantization equation. In the sine nonlinearity case, the convergence is proven over arbitrary large times, while in the polynomial case, we  prove that this approximation result holds over arbitrary large times when the parameter $\eps$ goes to zero even with a lack of suitable global well-posedness theory for the corresponding wave equations.
\end{abstract}



\maketitle
%


\tableofcontents

\section{Introduction}\label{SEC1}

\subsection{Smoluchowski-Kramers approximation} For $\eps>0$, we look at the following stochastic damped nonlinear wave equation on $\T^2 = (\R / 2 \pi \Z)^2$,

\noi
\begin{align}
\begin{cases}
\eps^2 \dt^2 u_\eps +  \dt u_\eps + (1- \Dl) u_\eps +  \mathcal{N}(u_\eps)  \,  = \Phi \xi,    \\
(u_\eps,\dt u_\eps)|_{t=0}=(\phi _0,\phi _1),
\end{cases}
\quad (x,t) \in \T^2 \times \R_+.
\label{SNLWa}
\end{align}

\noi
In the above, $\mathcal{N}$ is a nonlinearity, $\Phi$ is a bounded operator on $L^2(\T^2)$ and $\xi(x, t)$ denotes a real-valued (Gaussian) space-time white noise on $\T^2\times \R_+$
with the space-time covariance formally given by
\[ \E\big[ \xi(x_1, t_1) \xi(x_2, t_2) \big]
= \dl(x_1 - x_2) \dl (t_1 - t_2).\]

It has been proven in various contexts that the solution $u_\eps$ to \eqref{SNLWa} converges to the solution $u$ of the following parabolic equation as $\eps \to 0$:

\noi
\begin{align}
\begin{cases}
\dt u + (1 - \Dl) u + \mathcal{N}(u) = \Phi \xi,    \\
u |_{t=0}= \phi_0,
\end{cases}
\quad (x,t) \in \T^2 \times \R_+.
\label{SQEa}
\end{align}

\noi
This type of convergence result is known as the Smoluchowski-Kramers approximation. In the infinite dimensional setting, various works have proven the Smoluchowski-Kramers approximation property on some bounded domains and for equations stochastically forced by some random noises \cite{CF1,CF2,CF3,CF4,CG,CS1,CS2,CS3,ChF}. Once the approximation is established other important questions arise: how do relevant properties for the second and first order systems compare? For instance, the evolutions of the invariant states \cite{CF2,CF4,CG} and the large deviation and exit problems \cite{CS2,ChF} have also been studied. However in dimension larger than two, the available results only considered random noises which were smooth in space. Indeed, although, in the polynomial case, the well-posedness theory for \eqref{SQEa} has been known for some time since the work of Da Prato and Debussche \cite{DPD}, it is only until recently that well-posedness for \eqref{SNLWa} is well understood. See \cite{GKO1,GKO2,GKOT,OOR,OOT1,OOT2,OWZ}. In the case of a sine nonlinearity; namely, for the so-called sine-Gordon model, the well-posedness issue for the wave and heat equations has only been studied recently \cite{HS, ORSW1, ORSW2, CHS}.

In this paper, we investigate the Smoluchowski-Kramers approximation problem for infinite dimensional systems stochastically forced by a white noise for the first time. We note that Fukuizimi, Hoshino and Inui studied independently in a recent paper \cite{FHI} a similar problem: the non-relativistic limit problem for the (complex) stochastic nonlinear damped wave equation in two dimensions with the Gibbs measure initial data. See Remark \ref{RMK:FHI} for some additional comments on the differences between our results and the ones in \cite{FHI}.

In the deterministic setting (i.e. with $\Phi \equiv 0$ in \eqref{SNLWa} and \eqref{SQEa}) such problems have already been studied in \cite{IM} on Euclidean domains. Note that on $\R^d$ ($d \ge 2$), the scaling $(x,t) \mapsto (\eps^{-1}x, \eps^{-2}t)$ correlates the asymptotics $\eps \to 0$ and $t \to \infty$ in \eqref{SNLWa} (with $\Phi \equiv 0$ and with $1 - \Dl$ replaced by $-\Dl$). In this regard, it has been observed, that solutions to nonlinear wave equations tend to get closer to solutions to corresponding linear heat equations over large times. See for instance \cite{HO,MN,Narazaki,Wakasugi}.

We now informally discuss why the Smoluchowski-Kramers approximation occurs. Consider the following homogeneous linear damped wave equation: 
\begin{align}
\begin{cases}
\eps^2 \dt^2 u_\eps +  \dt u_\eps + (1 - \Dl) u_\eps  = 0    \\
(u_\eps,\dt u_\eps)|_{t=0}=(\phi_0,\phi_1).
\end{cases}
\label{X1}
\end{align}

\noi
By taking the spatial Fourier transform, we have 
\begin{align}
\eps^2 \dt^2 \ft u_\eps (n) +  \dt \ft u_\eps(n)  + \jb{n}^2 \ft  u_\eps(n)   = 0    
\label{X2}
\end{align}

\noi
for $n \in \Z^2$.
The roots of the characteristic polynomial $\eps^2 \Ld^2 + \Ld + \jb{n}^2 = 0$ are given by 
\begin{align}
\Ld_\eps^\pm (n)
= \frac{-1 \pm \sqrt{ 1- 4 \jb{n}^2 \eps^2}}{2\eps^2}.
\label{ld1}
\end{align}

\noi
Note that we have $\Ld_\eps^\pm (n) \in \R$ if and only if $\jb{n} \leq (2\eps)^{-1}$. In the low frequency regime $\jb{n} \leq (2\eps)^{-1}$,
the solution to \eqref{X2} with 
\begin{align}
(\ft u_\eps(n) ,\dt \ft u_\eps(n))|_{t=0}=(\ft \phi_0(n),\ft \phi_1(n))
\label{X3}
\end{align}

\noi
is given by 
\begin{align}
\ft  u_\eps(n, t)  = e^{-\frac{t}{2\eps^2}} \cosh (\ld_\eps(n) t) \ft \phi_0(n)
+ e^{-\frac{t}{2\eps^2}} \frac{\sinh (\ld_\eps(n) t)}{\ld_\eps(n)}
\bigg( \frac{1}{2\eps^2} \ft \phi_0(n) + \ft \phi_1(n)\bigg), 
\label{L1}
\end{align}

\noi
where $\ld_\eps(n)$ is defined by 
\begin{align}
\ld_\eps(n)
= \frac{ \sqrt{ 1- 4 \jb{n}^2 \eps^2}}{2\eps^2}.
\label{ld2}
\end{align}

\noi
In the high frequency regime $\jb{n} >  (2\eps)^{-1}$,
the solution to \eqref{X2} with initial data \eqref{X3}
is given by

\begin{align}
\ft  u_\eps(n, t)  = e^{-\frac{t}{2\eps^2}} \cos (\ze_\eps(n) t) \ft \phi_0(n)
+ e^{-\frac{t}{2\eps^2}} \frac{\sin (\ze_\eps(n) t)}{\ze_\eps (n)}
\bigg( \frac{1}{2\eps^2} \ft \phi_0(n) + \ft \phi_1(n)\bigg), 
\label{L2}
\end{align}

\noi
where $\ze_\eps(n)$ is defined by 
\begin{align}
\ze_\eps(n)
= \frac{ \sqrt{  4 \jb{n}^2 \eps^2 - 1}}{2\eps^2}.
\label{ld3}
\end{align}

Let $\Pl$ and $\Ph$ be the sharp projections
onto the (spatial) frequencies 
 $\{ n \in \Z^2: \jb{n} \leq (2\eps)^{-1}\}$
and  $\{ n \in \Z^2: \jb{n} >  (2\eps)^{-1}\}$, respectively, 
defined by 
\begin{align}
\Pl  f =  \F^{-1} (\ind_{\jb{n} \le (2\eps)^{-1}} \ft f(n))
\qquad \text{and}\qquad 
\Ph  f =  \F^{-1} (\ind_{\jb{n} > (2\eps)^{-1}} \ft f(n)).
\label{proj1}
\end{align}

\noi
Then, define the operator $S_\eps(t)$ and $\D_\eps(t)$ by setting
\begin{align}
S_\eps(t) = 
 \frac{\sinh (\ld_\eps(\nb) t)}{\ld_\eps(\nb)} \Pl
 +  \frac{\sin (\ze_\eps(\nb) t)}{\ze_\eps(\nb)} \Ph
\label{L3}
\end{align}

\noi
and 
\begin{align}
\D_\eps(t) = e^{-\frac{t}{2\eps^2}} S_\eps(t) .
\label{L4}
\end{align}

\noi
\begin{remark}\rm \label{RMK:set}
At this point, the operator $\D_\eps$ is only defined for $\eps \in I$ where $I$ is the set $I:= (0,\infty) \setminus \{  \frac{1}{2 \jb{n}} : n \in \Z^2 \}$. However, we show in Lemma \ref{LEM:F3} that the map $\eps \in I \mapsto \D_\eps$ can be extended to $(0,\infty)$. In what follows, since we are interested in the behaviour of the solutions to \eqref{SNLWa} near $\eps =0$, we will consider that all quantities are defined for $\eps \in [0,1]$.
\end{remark}

\noi
From  \eqref{L1} and \eqref{L2}, we see that 
the solution $u_\eps$ to \eqref{X1} is given by 
\begin{align}
u_\eps(t) = \dt \D_\eps(t) \phi_0 + \D_\eps(t) (\eps^{-2} \phi_0 + \phi_1).
\label{L5}
\end{align}

\noi
Furthermore, by the Duhamel principle, 
the solution $u_\eps$ to the following nonhomogeneous
linear damped wave equation:
\begin{align}
\begin{cases}
\eps^2 \dt^2 u_\eps +  \dt u_\eps + (1 - \Dl) u_\eps  = F\\
(u_\eps,\dt u_\eps)|_{t=0}=(\phi_0,\phi_1).
\end{cases}
\label{X4}
\end{align}

\noi
is given by 
\begin{align}
u_\eps(t) = \dt \D_\eps(t) \phi_0 + \D_\eps(t) (\eps^{-2} \phi_0 + \phi_1)
+ \int_0^t \eps^{-2}\D_\eps(t -t') F(t') dt'
\label{L6}
\end{align}

From \eqref{ld1} with a Taylor expansion, we have
\begin{align}
\begin{split}
\Ld_\eps^+ (n)
& = \frac{-2\jb{n}^2} {1+ \sqrt{ 1- 4 \jb{n}^2 \eps^2}}
= - \jb{n}^2 + O(\jb n ^4 \eps^2) \too - \jb n ^2,\\
\Ld_\eps^- (n)
& = \frac{-2\jb{n}^2} {1- \sqrt{ 1- 4 \jb{n}^2 \eps^2}}
\too - \infty
\end{split}
\label{ld4}
\end{align}

\noi
in the regime $\jb{n} = o(\eps^{-\frac12})$ as $\eps \to 0$, 
namely in the regime 

\begin{align}
\sqrt{ 1- 4 \jb{n}^2 \eps^2} =  1 - 2 \jb n ^2 \eps^2 + O(\jb n ^4 \eps^4)
\label{taylor1}
\end{align}

\noi
as $\eps \to 0$.
Let $\chi$ be a smooth non-negative function such that $\chi \equiv 1$ on $\{ x \in \R : |x| \le 1\}$ and $\supp (\chi) \subset \{ x \in \R: |x| \le 2\}$.  Hence, if we denote by $\P_{N}$ ($N \in \R$) the smooth projection onto (spatial) frequencies $\{ n \in \Z^2 : \jb n \le N \}$ defined by

\noi
\begin{align}
\P_{N} f := \F^{-1} ( \chi_{N}(n)  \ft f (n) ),
\label{proj1b}
\end{align}

\noi
with $\chi_N = \chi( \frac{ \jb{\cdot}}{N} )$,\footnote{Here, we introduce smooth Fourier cut-offs as they are required in the proof of Lemma \ref{LEM:green} below.} we have, at a formal level:
\begin{align}
\begin{split}
 \eps^{-2} \D_\eps(t) \P_{\eps^{-\frac12+\ta}}  
& = \frac{e^{\Ld_\eps^+(\nb) t } - e^{\Ld_\eps^-(\nb) t }}{\sqrt{ 1- 4 \jb{\nb}^2 \eps^2}}
 \P_{\le \eps^{-\frac12+\ta}}  
 \too P_0(t)  \P_{ \eps^{-\frac12+\ta}}  , 
\label{L7}\\
\dt \D_\eps(t)   \P_{ \eps^{-\frac12+\ta}}  
& = \bigg( \Big( 1 - \frac{1}{\sqrt{1 - 4 \jb{\nb}^2 \eps^2}} \Big) \frac{e^{\Ld^+_\eps(\nb)t}}{2}\\
& \hphantom{XX}
+ \Big( 1 + \frac{1}{\sqrt{1 - 4 \jb{\nb}^2 \eps^2}} \Big) \frac{e^{\Ld^-_\eps(\nb)t}}{2}  \bigg) \P_{ \eps^{-\frac12+\ta}}  \\
& \too 0
\end{split}
\end{align}

\noi
for any $0 < \ta \ll 1$, 
where 
\begin{align}
P_0(t) = e^{(\Dl - 1) t}.
\label{Lheat}
\end{align}

\noi
See Lemma \ref{LEM:F2} for a rigorous justification of \eqref{L7}. 

\noi
\begin{remark}\rm
By looking more carefully into the kernels involved, one can prove that the convergence \eqref{L7} occurs in the regime $\jb n = o(\eps^{-1})$. Namely, the convergence in \eqref{L7} is valid with $\P_{ \eps^{-\frac12+\ta}}$ replaced by $\P_{ \eps^{-1+\ta}}$, see Lemma \ref{LEM:F2}
\end{remark}

\noi
\begin{remark}
\rm Note that (the proof of) Lemma \ref{LEM:F2} shows in particular that $ \P_{\eps^{- 1 + \ta}} \big( \eps^{-2} \D_\eps - P_0 \big)$ and $\P_{\eps^{- 1 + \ta}} \dt \D_\eps$ - viewed as operators from $H^s(\T^2)$ to itself for any $s \in \R$ - both converge to zero as $\eps \to 0$ only pointwisely in time. In order to get a uniform-in-time convergence, i.e. in $L^{\infty} \big( [0,T]; H^s(\T^2)\big)$ for any $ T >0$ and $s \in \R$, one must work with the operator $\P_{\eps^{- \frac12 + \ta}} \big( \eps^{-2} \D_\eps + \dt \D_\eps \big)$ which enjoys some extra cancellation. See Lemma \ref{LEM:F2} and Corollary \ref{COR:OP}.
\end{remark}

\noi
Therefore,
we see that $u_\eps$ in \eqref{L6} formally converges
to 
\begin{align}
u(t) = P_0(t) \phi_0 
+ \int_0^t P_0(t -t') F(t') dt', 
\label{L8}
\end{align}

\noi
satisfying the nonhomogeneous linear heat equation:
\begin{align}
\begin{cases}
  \dt u + (1 - \Dl) u  = F\\
u|_{t=0}=\phi_0.
\end{cases}
\label{X5}
\end{align}

\noi
Thus, we expect that the solution to \eqref{SNLWa} converges to the solution of the stochastic quantization equation \eqref{SQEa} as $\eps \to 0$.

\subsection{Results and outline of the approach}

For any $\eps > 0$, we introduce the stochastic convolutions by

\noi
\begin{align}
& \Psi_\eps= \sqrt 2 \int_{0}^t \eps^{-2}\D_\eps(t -t') dW(t') \label{sto1}, \\
& \Psi_0 = \sqrt 2 \int_{0}^t P_0(t-t') dW(t') \label{sto2}.
\end{align}

\noi
where  $W$ denotes a cylindrical Wiener process on $L^2(\T^2)$, defined on some probability space $(\O,\mathbb{P})$:
\begin{align}
W(t)
: = \sum_{n \in \Z^2 } B_n (t) e_n
\label{W1}
\end{align}

\noi
and  
$\{ B_n \}_{n \in \Z^2}$ 
is defined by 
$B_n(t) = \jb{\xi, \ind_{[0, t]} \cdot e_n}_{ x, t}$.
Here, $\jb{\cdot, \cdot}_{x, t}$ denotes 
the duality pairing on $\T^2\times \R$.
As a result, 
we see that $\{ B_n \}_{n \in \Z^2}$ is a family of mutually independent complex-valued\footnote
{In particular, $B_0$ is  a standard real-valued Brownian motion.} 
Brownian motions conditioned so that $B_{-n} = \cj{B_n}$, $n \in \Z^2$. 
By convention, we normalized $B_n$ such that $\text{Var}(B_n(t)) = t$.

In this paper we look at \eqref{SNLWa} and \eqref{SQEa} in the (singular) white noise setting $\Phi \equiv \sqrt 2 \operatorname{Id}$ with two types of nonlinearities: the polynomial case and the sine nonlinearity. Namely, $\mathcal{N}(u) = u^k$ for some integer $k \ge 2$ or $\mathcal{N}(u) = \g \sin(\be u)$ for some $\g, \be >0$. The corresponding equations are (for $\eps >0)$,

\noi
\begin{align}
& \begin{cases}
\eps^2 \dt^2 u_\eps +  \dt u_\eps + (1- \Dl) u_\eps +  u_\eps^k  \,  = \sqrt 2 \xi,    \\
(u_\eps,\dt u_\eps)_{|t=0}=(\phi _0,\phi _1),
\end{cases}
\quad (x,t) \in \T^2 \times \R_+.
\label{SNLW} \\ \nonumber \\
&
\begin{cases}
\eps^2 \dt^2 u_\eps + \dt u_\eps + (1- \Dl)  u_\eps   + \g  \sin(\be u_\eps) = \sqrt 2 \xi\\
(u_\eps, \dt u_\eps)_{|_{t = 0}} = (\phi_0, \phi_1) , 
\end{cases}
\qquad (x, t) \in \T^2 \times \R_+,
\label{SdSG}
\end{align}

\noi
while their parabolic counterparts (i.e. for $\eps=0$) read

\noi
\begin{align}
& \begin{cases}
\dt u + (1 - \Dl) u + u^k = \sqrt 2 \xi,    \\
u_{|t=0}= \phi_0,
\end{cases}
\quad (x,t) \in \T^2 \times \R_+
\label{SQE} \\ \nonumber \\
& \begin{cases}
\dt u + (1- \Dl)  u +  \g  \sin(\be u) = \sqrt 2 \xi\\
u_{|t=0} = \phi_0, 
\end{cases}
\qquad (x, t) \in \T^2 \times \R_+.
\label{SSG}
\end{align}

\noi

Given $\eps \in [0,1]$ and $N \in \N$, 
we define the truncated stochastic convolution $\Psi_{\eps,N} = \P_{ N} \Psi_{\eps}$, 
solving the truncated linear stochastic  wave equation/heat equation (for $\eps =0$):

\noi
\begin{align}
\eps^2 \dt^2 \Psi_{\eps,N} +  \dt \Psi_{\eps,N} + (1- \Dl) \Psi_{\eps,N}  = \sqrt 2  \P_{N} \xi,
\label{convo}
\end{align}

\noi
with the zero initial data.
Here,   $\P_{N}$ is as in \eqref{proj1b}. Then, $\Psi_{\eps,N}$ is represented by the following formula:

\noi
\begin{align}
\Psi_{\eps,N} = \sqrt 2 \int_{0}^t \eps^{-2}\D_\eps(t -t') d (\P_N W)(t'),
\label{convo10}
\end{align}

\noi
where $W$ is as in \eqref{W1}. For each fixed $\eps \in (0,1]$, $x \in \T^2$ and $t \geq 0$,  
we see from \eqref{convo10} and \eqref{ld3} that  $\Psi_{\eps,N}(x, t)$
 is a mean-zero real-valued Gaussian random variable with variance
\begin{align}
\begin{split}
\s_{\eps,N}(t)  \stackrel{\text{def}}{=} \E \big[\Psi_{\eps,N}(x, t)^2\big]
& = C_\eps(t) +  \sum_{\substack{n \in \Z^2\\ \eps^{-1} \ll \jb n  \les N}} 
\int_0^t \bigg[\frac{\sin( \ze_\eps(n) (t - t'))}{\ze_\eps(n)} \bigg]^2 dt'
\\
& \sim  \log N
\end{split}
\label{sig}
\end{align}

\noi
\noi
for some constant $C = C_\eps(t)$ and  $N \gg \eps^{-1}$ and $\ze_\eps$ as in \eqref{ld3}. Note that the implicit constant in \eqref{sig} depends on $\eps$ and $t$. See Lemma \ref{LEM:cov} for precise bounds on $\s_{\eps,N}$.
We point out that the variance $\s_{\eps,N}(t)$ is time-dependent.
For any $t > 0$, 
we see that  $\s_{\eps,N}(t) \to \infty$ as $N \to \infty$, 
which can be used to show  that $\{\Psi_{\eps,N}(t)\}_{N \in \N}$
is almost surely unbounded in $W^{0, p}(\T^2)$ for any $1 \leq p \leq \infty$. Similar comments apply to $\Psi_{0,N}$ and its variance $\s_{0,N}$. 

For notational convenience, we denote by $u_0$ and $w_0$ the solutions to \eqref{SQE} and \eqref{SSG} respectively and we view them as solutions to \eqref{SNLW} and \eqref{SdSG} with $\eps = 0$. For $N \in \N$ and $\eps \in [0,1]$, let $u_{\eps,N}$ and $w_{\eps,N}$ denote the solution to \eqref{SNLW} and \eqref{SdSG} where the rough noise $\xi$ is replace by the regularized noise $\P_{N}\xi$. Proceeding with the following decomposition of $u_{\eps,N}$ the solutions to \eqref{SNLW} and \eqref{SdSG} (\cite{McKean, BO96, DPD}):

\noi
\begin{align}
u_{\eps,N} =  v_{\eps,N} + \Psi_{\eps,N}.
\label{H1}
\end{align}

\noi
In the case of the polynomial nonlinearity \eqref{SNLW}, this decomposition leads to

\noi
\begin{align}
\eps^2 \dt^2 v_{\eps,N} +  \dt v_{\eps,N} + (1 - \Dl) v_{\eps,N} +  \sum_{\ell=0}^k {k\choose \ell} \Psi_{\eps,N}^\ell v_{\eps,N}^{k-\ell} = 0 \label{SNLW2}.
\end{align}

\noi
Due to the deficiency of regularity, 
the power $\Psi_{\eps,N}^\ell$ does not converge to any limit  as $N\to \infty$. 
This is where we  introduce
the Wick renormalization.
Namely, 
we replace  $\Psi^\ell_{\eps,N}$
by  its Wick ordered counterpart: 
\begin{align}
:\!\Psi^\l_{\eps,N}(x, t) \!: \, \stackrel{\text{def}}{=} H_\ell(\Psi_{\eps,N}(x, t);\sigma_{\eps,N}(t)), 
\label{Herm1}
\end{align}

\noi
where 
$H_\l(x; \s )$ is the Hermite polynomial of degree $\l$ with variance parameter $\s$.
See Section~\ref{SEC:2}.
Then, for each $\l \in \N$, the Wick power
$ :\! \Psi_{\eps,N}^\l \!:$
converges 
to a limit, denoted by 
$:\! \Psi_{\eps}^\l  \!: \,$, 
in $C([0,T];W^{-\s,\infty}(\T^2))$
for any $\eps \in [0,1]$, $\s > 0$, and $T > 0$, 
almost surely. See Proposition \ref{PROP:sto} below.

Similarly, in the case of the sine nonlinearity \eqref{SdSG}, the decomposition \eqref{H1} gives

\noi
\begin{align}
\eps^2 \dt^2 v_{\eps,N} +  \dt v_{\eps,N} + (1 - \Dl) v_{\eps,N} + \g \Im \big(  e^{i \be v_{\eps,N}}  e^{i \be \Psi_{\eps,N}}   \big) = 0. \label{SdSG2}
\end{align}

\noi
Since $\Psi_{\eps,N}$ converges to a distribution of negative regularity, $e^{i \be \Psi_{\eps,N}}$ will experience some ``averaging effect" and will converge to $0$ in the sense of distributions. Hence, in order to see a non-trivial behavior in the limit $N \to \infty$, one needs to have $\g  \to \infty$ in \eqref{SdSG2}. More precisely, we define $\g_{\eps,N} = \g_{\eps,N}(\be)$ by 

\noi
 \begin{align}\label{CN}
 \g_{\eps,N} = e^{\frac{\be^2}{2}\s_{\eps,N}},
 \end{align}

\noi
and the so-called imaginary Gaussian multiplicative chaos $\U_{\eps,N}$ by 
 \begin{align}
 \U_{\eps,N}(t,x)  = \,:\!e^{ i\be\Psi_{\eps,N}(t,x)}\!:\,\,  
 \deff
\g_{\eps,N} e^{i \be\Psi_{\eps,N}(t,x)}
=  e^{\frac{\be^2}2 \s_{\eps,N}} e^{i \be \Psi_{\eps,N}(t,x)}, 
\label{Ups}
 \end{align} 
 
\noi
where 
 $\g_{\eps,N}$ and $\s_{\eps,N}$ are as in \eqref{CN} and \eqref{sig}. Using stochastic analysis, one can then show that $\U_{\eps,N}$ converges almost surely to a distribution $\U_{\eps}$ in $C \big( [0,T]; W^{- \frac{\be^2}{4 \pi} - \s, \infty }(\T^2) \big)$, for any $\s, T > 0$. 
 
\noi
\begin{remark}\rm
We actually show that our stochastic objects $:\! \Psi_{\eps}^\l  \!: $ and $\U_{\eps}$ are continuous in $\eps \in [0,1]$. For the latter this is achieved by tracking down the exact dependence in $\eps \in [0,1]$ of the covariance functions $\big\{ \E [ \Psi_{\eps}(t,x) \Psi_{\eps}(t,y) ] \big\}_{\eps \in [0,1]}$. See Subsection \ref{SUBSEC:sin1}.
\end{remark}
 
These renormalizations give rise 
to the renormalized versions of \eqref{SNLW2} and \eqref{SdSG2}:

\noi
 \begin{align}
& \eps^2 \dt^2 v_{\eps,N} + \dt v_{\eps,N} + (1 - \Dl) v_{\eps,N} +  \sum_{\ell=0}^k {k\choose \ell} :\!\Psi_{\eps,N}^\l\!:  v_{\eps,N}^{k-\ell} = 0, \label{SNLW11} \\
& \eps^2 \dt^2 v_{\eps,N} + \dt v_{\eps,N} +(1-\Dl)v_{\eps,N}  + \Im \big\{e^{i \be  v_{\eps,N} }\U_{\eps,N} \big\}=0, \label{SdSG11}
\end{align}

\noi
for $\eps \in [0,1]$. By taking a limit as $N \to \infty$, 
we then obtain the limiting equations:

\noi
 \begin{align}
&\eps^2 \dt^2 v_\eps + \dt v_\eps + (1 - \Dl) v_\eps +  \sum_{\ell=0}^k {k\choose \ell} :\!\Psi_\eps^\l\!:  v_\eps^{k-\ell} = 0,
\label{SNLW4}\\
& \eps^2 \dt^2 v_{\eps} + \dt v_{\eps} +(1-\Dl)v_{\eps}  + \Im \big\{e^{i \be  v_{\eps} }\U_{\eps} \big\}=0. \label{SdSG3} \
\end{align}

\noi
Given the almost-sure space-time regularity of the Wick powers
$:\!\Psi_\eps^\l\!:$, $\l = 1, \dots, k$, and of the imaginary Gaussian multiplicative chaos $\U_{\eps}$, standard deterministic analysis using the product estimates (Lemma~\ref{LEM:bilin})
yield local well-posedness of \eqref{SNLW4} and \eqref{SdSG3}.

Recalling the decomposition~\eqref{H1}, 
this argument also shows that 
the solution $u_{\eps,N} = \Psi_{\eps,N} + v_{\eps,N}$, with $v_{\eps,N}$ solving \eqref{SNLW11}, to the renormalized 
equation with the regularized noise $\P_{N} \xi$

\noi
\begin{align} 
 \eps^2 \dt^2 u_{\eps,N} + \dt u_{\eps,N} + (1 -  \Dl)  u_{\eps,N}   + :\!u_{\eps,N}^k\!: \, = \sqrt 2 \P_{N} \xi \label{SNLW10}
\end{align}

\noi
where the renormalized nonlinearity $:\!u_{\eps,N}^k\!: $ is interpreted as

\noi
\begin{align*}
& :\!u_{\eps,N}^k \!: 
= \, :\!(\Psi_{\eps,N} + v_{\eps,N})^k \!: \, 
= 
 \sum_{\ell=0}^k {k\choose \ell} :\!\Psi_{\eps,N}^\l\!:  v_{\eps,N}^{k-\ell},
\end{align*}

\noi
converge almost surely to the stochastic process $u_\eps
 = \Psi_\eps + v_\eps$,  where $v_\eps$ satisfies \eqref{SNLW4}.

Similarly, the solution $u_{\eps,N} = \Psi_{\eps,N} + v_{\eps,N}$, with $v_{\eps,N}$ solving \eqref{SdSG11}, to the renormalized equation with regularized noise $\P_N \xi$

\noi
\begin{align}
\eps^2 \dt^2 u_{\eps,N} + \dt u_{\eps,N} + (1 -  \Dl)  u_{\eps,N}   + :\!  \sin \big( \be u_{\eps,N} \big)  \!: \, = \sqrt 2 \P_{N} \xi \label{SdSG10}
\end{align}

\noi
where the nonlinearity is interpreted as 

\noi
\begin{align}
:\! \sin(\be u_{\eps,N})\!: = \g_{\eps,N} \sin ( \be( \Psi_{\eps,N}+ v_{\eps,N})) =\Im \big\{e^{i \be  v_{\eps,N} }\U_{\eps,N} \big\},
\end{align}

\noi
converges almost surely to the stochastic process $u_\eps = \Psi_\eps + v_\eps$ where $v_\eps$ solves \eqref{SdSG3}.

It is in this sense that we say that 
the following renormalized versions of the polynomial models \eqref{SNLW} and \eqref{SQE}

\noi
\begin{align}
\eps^{2}\dt^2 u_\eps + \dt u_\eps + (1 -  \Dl)  u  _\eps +  :\!u_\eps^k\!: \, & = \sqrt 2 \xi  \label{rSNLW},\\
 \dt u +  (1 -  \Dl)  u  +  :\!u^k\!: \,& = \sqrt 2 \xi  \label{rSNLW1},
\end{align}

\noi
and the sine-Gordon models \eqref{SdSG} and \eqref{SSG}

\begin{align}
 \eps^2 \dt^2 u_{\eps} + \dt u_{\eps} + (1 -  \Dl)  u_{\eps}   + :\! \sin(\be u_{\eps})\!: \, & = \sqrt 2  \xi,\label{rSdSG}\\
\dt u + (1 -  \Dl)  u   + :\! \sin(\be u) \!: \, & = \sqrt 2   \xi \label{rSdSG1},
\end{align}

\noi
are locally well-posed for $\eps \in [0,1]$
(and for  initial data of suitable regularity).

We can now state the main results of the paper. Firstly, we show the following local existence and Schmoluchowski-Kramers approximation for polynomial nonlinearities. Let $\H^s(\T^2) := H^s(\T^2) \times H^{s-1}(\T^2) $ for any $s \in \R$.

\noi
\begin{theorem}\label{THM1}Fix $k \ge 2$ an integer. Let $(\phi_0, \phi_1) \in \H^s(\T^2)$ for $s > \frac{2k-3}{2k-2}$. There exists a random time $T = T(\om) $ almost surely positive such that for each $\eps \in (0,1]$, there exists a solution $u_\eps$ to \eqref{rSNLW} with initial data $(\phi_0,\phi_1)$ and a solution $u$ to \eqref{rSNLW1} with initial data $\phi_0$ which belong to the class $C \big( [0,T]; H^{-\s}(\T^2) \big)$, for any $\s > 0$.

Moreover, $\{u_\eps\}_{\eps \in (0,1]}$ converges almost surely to the solution $u$ in $C \big( [0,T]; H^{-\s}(\T^2) \big)$ as $\eps \to 0$.
\end{theorem}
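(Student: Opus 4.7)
The plan is to implement the Da Prato--Debussche decomposition $u_\eps = \Psi_\eps + v_\eps$ uniformly in $\eps \in [0,1]$ and to solve for the residual $v_\eps$ via a Duhamel fixed point argument whose constants do not depend on $\eps$. Recalling \eqref{L6}, for $\eps \in (0,1]$ the residual satisfies
\begin{align*}
v_\eps(t) = \dt \D_\eps(t)\phi_0 + \D_\eps(t)\bigl(\eps^{-2}\phi_0 + \phi_1\bigr) - \int_0^t \eps^{-2}\D_\eps(t-t')\, \NN(\Psi_\eps, v_\eps)(t')\, dt',
\end{align*}
where $\NN(\Psi_\eps, v) = \sum_{\ell=0}^{k} \binom{k}{\ell}\, :\!\Psi_\eps^\ell\!:\, v^{k-\ell}$, while $v_0 = u_0 - \Psi_0$ satisfies the analogous identity with $\eps^{-2}\D_\eps(t)$ replaced by $P_0(t)$ and the homogeneous part reduced to $P_0(t)\phi_0$. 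The key structural observation is that although $\eps^{-2}\phi_0$ formally diverges as $\eps \to 0$, the combination $\dt \D_\eps(t) \phi_0 + \eps^{-2}\D_\eps(t) \phi_0$ enjoys a cancellation, so that by Lemma \ref{LEM:F2} and Corollary \ref{COR:OP} the operators involved enjoy bounds in $H^s(\T^2)$ that are uniform in $\eps \in [0,1]$ and converge to their heat counterparts as $\eps \to 0$.

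Next, I would run a Picard iteration for $v_\eps$ in $X_T := C([0,T]; H^s(\T^2))$. The threshold $s > \frac{2k-3}{2k-2}$ is calibrated so that the product estimate (Lemma \ref{LEM:bilin}) handles each term $:\!\Psi_\eps^\ell\!:\, v_\eps^{k-\ell}$: the stochastic factor lies in $W^{-\sigma,\infty}(\T^2)$ for any $\sigma > 0$ with uniform-in-$\eps$ bounds via Proposition \ref{PROP:sto}, while the smooth factor $v_\eps^{k-\ell}$ is controlled in $H^s$ by Sobolev embedding $H^s(\T^2) \hookrightarrow L^{2(k-\ell)}(\T^2)$. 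Combining these inputs with the uniform operator bounds on $\eps^{-2}\D_\eps(t-t')$ and on the linear evolution of the initial data, one obtains a closed self-map and contraction on a ball of $X_T$ for a random time $T = T(\om) > 0$ independent of $\eps \in [0,1]$. This yields the unique $v_\eps \in X_T$ and hence $u_\eps = \Psi_\eps + v_\eps \in C([0,T]; H^{-\sigma}(\T^2))$ for every $\eps \in [0,1]$.

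For the convergence $u_\eps \to u_0$, since $\Psi_\eps \to \Psi_0$ in $C([0,T]; H^{-\sigma}(\T^2))$ by Proposition \ref{PROP:sto}, it suffices to establish $v_\eps \to v_0$ in $X_T$. Subtracting the two Duhamel formulas, I would split $v_\eps - v_0$ into four pieces: (i) a linear piece $(\dt \D_\eps(t) + \eps^{-2}\D_\eps(t))\phi_0 + \D_\eps(t)\phi_1 - P_0(t)\phi_0$, which vanishes in $H^s$ by Corollary \ref{COR:OP}; (ii) a kernel-difference source $\int_0^t [\eps^{-2}\D_\eps(t-t') - P_0(t-t')]\, \NN(\Psi_0, v_0)(t')\, dt'$, which tends to zero by the same operator convergence combined with dominated convergence; (iii) a stochastic input difference driven by $:\!\Psi_\eps^\ell\!: - :\!\Psi_0^\ell\!:\,$, which vanishes by the joint continuity in $\eps$ of the enhanced noise; and (iv) a nonlinear piece $\NN(\Psi_\eps, v_\eps) - \NN(\Psi_\eps, v_0)$, which is absorbed by the contraction constant on $[0,T(\om)]$, possibly after a Gronwall argument on a further shrunk interval.

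The main obstacle, to my mind, is the analysis of the linear propagators: $\eps^{-2}\D_\eps(t)$ resembles the heat kernel $P_0(t)$ only on the low-frequency regime $\jb{n} \ll \eps^{-1}$, while on the complementary high-frequency regime the multiplier is oscillatory (see \eqref{L2}) and, as noted in the remark after \eqref{L7}, converges only pointwise in time. What allows the Duhamel scheme to close uniformly in $\eps$ despite the prefactor $\eps^{-2}\phi_0$ is the cancellation between $\dt \D_\eps$ and $\eps^{-2}\D_\eps$ on the initial data, encoded in Corollary \ref{COR:OP}. Once this structural point is in place, the rest of the proof reduces to a standard uniform-in-$\eps$ Picard iteration combined with the continuity of the stochastic inputs from Proposition \ref{PROP:sto}.
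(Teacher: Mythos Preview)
Your proposal is correct and follows the same Da~Prato--Debussche strategy as the paper, with the same key ingredients: the uniform-in-$\eps$ operator bounds on $(\eps^{-2}+\dt)\D_\eps$ and on the Duhamel map (Proposition~\ref{PROP:det}, Corollary~\ref{COR:OP}), the uniform regularity and continuity-in-$\eps$ of the Wick powers (Proposition~\ref{PROP:sto}), and a product estimate in $H^s$ under the threshold $s>\tfrac{2k-3}{2k-2}$.

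The one organisational difference worth noting is how continuity in $\eps$ is obtained. You solve the fixed point for each $\eps$ separately in $C([0,T];H^s)$ with $\eps$-uniform constants, and then prove $v_\eps\to v_0$ by a four-piece difference estimate. The paper instead runs the contraction directly in the joint space $C\big([0,1]\times[0,T];H^{1-\delta}\big)$ (Proposition~\ref{PROP:LWP} via Corollary~\ref{COR:det}), so that continuity of $\eps\mapsto v_\eps$ falls out of the fixed point for free and no separate difference argument is needed. The price for the paper's route is a harmless $\theta$-derivative loss on the data (coming from the $C_{\eps,T}$ bounds in Corollary~\ref{COR:det}); the price for your route is the extra bookkeeping in pieces (i)--(iv). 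The paper in fact acknowledges (Remark after Proposition~\ref{PROP:LWP}) that your ``by hand'' route is viable but more involved; conversely, your route would reach the endpoint $s=\tfrac{2k-3}{2k-2}$ while the paper's $C_{\eps,T}$ argument stops just short of it.
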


\noi
\begin{remark} \rm
We emphasize here that the convergence result in Theorem \ref{THM1} means that there exists a set $\O_0 \subset \O$ (where $(\O,\mathbb{P})$ is the underlying probability space on which the noise $\xi$ is defined) such that $\mathbb{P}(\O_0) = 1$ and 

\noi
\begin{align*}
\| u^\om_\eps - u^\om _0\|_{C_T H^s_x} \to 0,
\end{align*}
for any $\om \in \O_0$ as $\eps \to 0$ and with $T$ and $s$ as in Theorem \ref{THM1}. The subsequent convergence results are also proved in that fashion. It is achieved by showing that the stochastic objects are continuous in $(\eps,t) $ by using a bi-parameter Kolmogorov continuity criterion, see Lemma \ref{LEM:kol}, and by solving a fixed point argument for $(\eps,t) \mapsto v_\eps(t)$ \eqref{H1} in spaces of the form $C \big( [0,1 ] \times [0,T]; H^s(\T^2) \big)$ of functions both continuous in $\eps \in [0,1]$ and in time $t \in [0,T]$.

This is in contrast with the literature where such convergence results are obtained in probability (which only implies the almost sure convergence up to a subsequence of $\eps$'s going to 0). See for instance \cite{CF2, FHI}.
\end{remark}

In the non-singular case with a polynomial nonlinearity (for instance \eqref{SNLWa} with a colored noise in space or in one space dimension) then the result of last theorem can be extended to arbitrary large time intervals since the solutions are known to be global. See for instance \cite{CF1, CF2}. In our singular setting however, the convergence of Theorem \ref{THM1} cannot be established over longer times because of a lack of a global well-posedness theory for \eqref{rSNLW} and $k > 3$ (the solutions are known to be global in time for $k=3$; see Remark \ref{RMK:cubic} below). Since, for $k$ odd, the solution $u_0$ to \eqref{rSNLW} is known to exist globally in time by an argument of Mourrat and Weber \cite{MW1} (see also \cite{Trenberth}), we can show {\it asymptotic large time well-posedness} for \eqref{rSNLW} (for $\eps >0$). More precisely, since $u_\eps$ gets closer to $u_0$ as $\eps \to 0$, we can extend the existence time and the convergence of our local solutions $u_\eps$ over larger times as $\eps \to 0$. This is the purpose of the following theorem.

\noi
\begin{theorem}\label{THM:GWP}
Let $k \ge 2$ be an odd integer. Fix a \textup{(}deterministic\textup{)} target time $T>0$. Let $(\phi_0, \phi_1) \in \H^s(\T^2)$ for $s > \frac{2k-3}{2k-2}$. There exists an almost surely positive random variable $\eps_0 = \eps_0(\om) $ such that for each $\eps \in [0,\eps_0]$ the solutions $u_\eps$ and $u$ to \eqref{rSNLW} and \eqref{rSNLW1}, respectively, constructed in Theorem \ref{THM1} exist up to time $T$.

Furthermore, $\{u_\eps\}_{\eps \in (0,\eps_0]}$ converges almost surely to $u_0$ in $C \big( [0,T]; H^{-\s} (\T^2) \big)$ as $\eps \to 0$, for any $\s >0$.
\end{theorem}

\begin{remark} \label{RMK:cubic} \rm
Fix $s > \frac45$. In \cite{GKOT}, the authors proved that \eqref{rSNLW} for $k=3$ and $\eps =1$ is globally well-posed in $\H^{s}(\T^2)$. By modifying their argument, one could likely show global well-posedness in $\H^{s}(\T^2)$ for $\eps \in (0,1]$. Since the stochastic quantization equation \eqref{SNLW} for $\eps = 0$ is globally well-posed in $H^{s}(\T^2)$ as well; see \cite{MW1, Trenberth}, the Smoluchowski-Kramers approximation proved in Theorem \ref{THM1} holds globally in time, i.e. in $C\big( [0,T]; H^{-\s}(\T^2) \big)$ (for any $\s >0$) for any $T>0$ in the cubic case $k = 3$.
\end{remark}

We prove the following result for the sine-Gordon model: 

\noi
 \begin{theorem}\label{THM:sinGWP}
Let  $0<\be^2<2\pi$ and $s > 1 - \frac{\be^2}{4\pi}$. Let $(\phi_0,\phi_1) \in \H^{s }(\T^2)$. Fix $T>0$. Then, for each $\eps \in (0,1]$, there exists a solution $u_\eps$ to the stochastic sine-Gordon equation ~\eqref{rSdSG} with initial data given by $(\phi_0,\phi_1)$ and a solution $u_0$ to \eqref{rSdSG} \textup{(and $\eps=0$)} with initial data given by $\phi_0$ which belong to the class $C( [0,T];  H^{-\s}(\T^2))$, for any $\s>0$. 

Moreover, $\{ u_\eps \}_{\eps \in (0,1]}$ converges to $u_0$ in $ C( [0,T];  H^{-\s}(\T^2))$ as $\eps \to 0$.
 \end{theorem}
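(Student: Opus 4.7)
The plan is to adapt the Da Prato--Debussche strategy for the stochastic sine--Gordon equation from \cite{ORSW1,ORSW2,HS} to the $\eps$-dependent propagator $\D_\eps$, treating $\eps$ as an auxiliary variable so that the solution is jointly continuous in $(\eps,t)\in[0,1]\times[0,T]$. From Subsection~\ref{SUBSEC:sin1} we know that both $\Psi_\eps$ and the imaginary Gaussian multiplicative chaos $\U_\eps$ from \eqref{Ups} extend to almost-surely continuous maps
\begin{align*}
(\eps,t)\in[0,1]\times[0,T]\longmapsto \U_\eps(t)\in W^{-\frac{\be^2}{4\pi}-\s',\infty}(\T^2)
\end{align*}
for any $\s',T>0$. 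Given $(\phi_0,\phi_1)\in\H^s(\T^2)$ with $s>1-\frac{\be^2}{4\pi}$, pick $\s'>0$ small and $\alpha$ with $\frac{\be^2}{4\pi}<\alpha<s$ (possible since $\be^2<2\pi$ forces $\frac{\be^2}{4\pi}<\tfrac12<1-\frac{\be^2}{4\pi}$); the product estimate (Lemma~\ref{LEM:bilin}) together with the composition estimate $\|e^{i\be z}\|_{H^\alpha}\les(1+\|z\|_{H^\alpha})^{c}$ then makes the nonlinearity $\Im\{e^{i\be z}\U_\eps\}\in H^{-\alpha}$ whenever $z\in H^\alpha$, with a polynomial bound in $\|z\|_{H^\alpha}$.

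First I would decompose $w_\eps=\Psi_\eps+z_\eps$ as in \eqref{H1} and rewrite \eqref{SdSG3} via \eqref{L6} as the fixed-point problem
\begin{align*}
z_\eps(t) = \dt\D_\eps(t)\phi_0 + \D_\eps(t)(\eps^{-2}\phi_0+\phi_1) - \int_0^t \eps^{-2}\D_\eps(t-t')\,\Im\bigl\{e^{i\be z_\eps(t')}\U_\eps(t')\bigr\}\,dt'.
\end{align*}
Using the heat-type smoothing of $\eps^{-2}\D_\eps$ on low frequencies and the $e^{-t/(2\eps^2)}$ damping on high frequencies (encoded in Lemma~\ref{LEM:F2} and Corollary~\ref{COR:OP}, which also control the linear-data piece uniformly in $\eps\in[0,1]$), one runs a contraction in the Banach space $C([0,1]_\eps\times[0,T_0]_t;H^s(\T^2))$ for some random $T_0>0$, producing a solution jointly continuous in $(\eps,t)$ on a common time interval. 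Continuity at $\eps=0$ is forced by the operator limits \eqref{L7} together with the continuity of $\U_\eps$ in $\eps$ established in Subsection~\ref{SUBSEC:sin1}.

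Next I would globalize. Because $|e^{i\be z}|=1$ and the composition bound is polynomial, the local existence time $T_0$ only depends on $\|\U_\eps\|_{C_TW^{-\alpha,\infty}}$ and mildly on $\|z_\eps(t_\star)\|_{H^s}$ in a way that is uniform in $\eps\in[0,1]$. Iterating the local argument as in \cite{ORSW1,ORSW2} on successive small windows then produces, for any prescribed $T>0$, an $\eps_0>0$ and a solution $z_\eps\in C([0,T];H^s(\T^2))$ for all $\eps\in[0,\eps_0]$, with uniform bounds. Finally, for the Smoluchowski--Kramers limit I would write
\begin{align*}
z_\eps - z_0 &= \bigl[\dt\D_\eps(t)\phi_0 + \D_\eps(t)(\eps^{-2}\phi_0+\phi_1) - P_0(t)\phi_0\bigr] \\
&\quad + \int_0^t \bigl(\eps^{-2}\D_\eps(t-t')-P_0(t-t')\bigr)\Im\{e^{i\be z_0}\U_0\}(t')\,dt' \\
&\quad - \int_0^t \eps^{-2}\D_\eps(t-t')\bigl(\Im\{e^{i\be z_\eps}\U_\eps\}-\Im\{e^{i\be z_0}\U_0\}\bigr)(t')\,dt',
\end{align*}
observe that the first two brackets tend to $0$ in $C([0,T];H^s(\T^2))$ by Corollary~\ref{COR:OP}, Lemma~\ref{LEM:F2} and $\U_\eps\to\U_0$, and absorb the third through a Gronwall argument using the Lipschitz dependence of the nonlinearity in $z$ with constants fixed by the uniform bound from the previous step.

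The main obstacle I expect is the uniform-in-$\eps$ smoothing in the contraction: the operator $\eps^{-2}\D_\eps$ behaves parabolically only for frequencies $\jb n\ll \eps^{-1}$ and oscillates on the complementary regime, so the fixed-point scheme has to carry the low/high-frequency split \eqref{proj1} throughout, combining the heat-like gain on $\Pl$ with the strong $e^{-t/(2\eps^2)}$ damping on $\Ph$ to recover enough regularity to close the estimate in $H^s$ uniformly in $\eps\in[0,1]$. Combined with the uniform-in-$\eps$ continuity of $\U_\eps$ established earlier, this is what enables the whole argument to be run jointly in $(\eps,t)$ rather than through a subsequential extraction.
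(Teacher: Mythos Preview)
Your approach is essentially the paper's: Da Prato--Debussche decomposition $w_\eps=\Psi_\eps+z_\eps$, a contraction for $(\eps,t)\mapsto z_\eps(t)$ run jointly in $C([0,1]\times[0,T_0];H^{1-\alpha})$ using the operator bounds of Corollary~\ref{COR:OP}, and iteration based on the fact that the local time depends only on $\|\U\|$. Two points deserve sharpening.

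First, you write that iteration produces a solution on $[0,T]$ for $\eps\in[0,\eps_0]$. This undersells the sine--Gordon case and conflates it with Theorem~\ref{THM:GWP}. Because $|e^{i\be z}|=1$, the bound \eqref{sin10} gives a contraction radius $R\sim\|(\phi_0,\phi_1)\|_{\H^{1-\alpha+\ta}}+1$ and a time $T_0\sim\|\U\|^{-2}$ that is \emph{independent} of $R$; there is no ``mild'' dependence on $\|z_\eps(t_\star)\|$ to manage. Hence iteration extends the solution to any $T>0$ simultaneously for all $\eps\in[0,1]$, as the theorem claims, not merely for small $\eps$.

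Second, the iteration step is not quite a repeat of the first local argument: at the restart time you only have $z(\eps,T_0)\in H^{1-\alpha}$, while $\dt z(\eps,T_0)$ carries a factor $\eps^{-2}$ and is not uniformly in $H^{-\alpha}$. The paper (Proposition~\ref{PROP:sinlwp2}, following Lemma~\ref{LEM:GWP1}) deals with this by bounding not $\dt z(\eps,T_0)$ itself but the composite object $\D_\eps(t)\dt z(\eps,T_0)$, exploiting that the troublesome $\eps^{-2}$ is absorbed by another application of Corollary~\ref{COR:OP}. Your sketch should flag this; otherwise the ``successive small windows'' iteration is formally incomplete. Your Gronwall argument for $z_\eps\to z_0$ is fine and matches the paper's exponential-weight variant in Lemma~\ref{PROP:GWP1}.
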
 
 
 We conclude this section with a few remarks which complement the last statements. 
 
\noi
\begin{remark}\rm
Per usual when one uses the decomposition \eqref{H1}, the solutions constructed in Theorems \ref{THM1}, \ref{THM:GWP} and \ref{THM:sinGWP} are unique in classes of the form $\Psi_\eps + C([0,T]; H^{1-\s} (\T^2) )$ for any $\eps \in [0,\eps_0]$ for some appropriate $\eps_0 \in [0,1]$, $T>0$ and $\s >0$.
\end{remark}

\noi
\begin{remark}\label{RMK:FHI} \rm
In \cite{FHI}, Fukuizimi, Hoshino and Inui studied a similar convergence problem. They considered the following complex-valued equation for $n \in \N$:

\noi
\begin{align}
\begin{cases}
\eps^2 \dt^2 u_\eps +2 \al  \dt u_\eps + (1- \Dl) u_\eps + : \! u_\eps ^{n+1} \cj{u_\eps}^{n} \! :  \,  = 2 \sqrt{\Re(\al)}  \xi,    \\
(u_\eps, \eps \dt u_\eps)|_{t=0}=(\phi _0,\phi _1) \sim \rhoo_{n},
\end{cases}
\quad (x,t) \in \T^2 \times \R_+,
\label{FHI1}
\end{align}

\noi
$\eps > 0$ and $\al \in \C$ with $\Re(\al) > 0$ and $\Im(\al) \neq 0$ and where $\rhoo_n$ is the Gibbs measure associated with \eqref{FHI1} (see \cite{OT2} for a construction of $\rhoo_n$). The authors then looked at the limits $\eps \to 0$ and $\Im(\al) \to 0$, respectively known as the non-relativistic and ultra relativistic limits. As for the non-relativistic limit, the authors construct a set $\O_0$ with $\rhoo_n(\O_0) = 1$ such that $\{u_{j^{-1}}\}_{j \in \N}$, the family of solutions to \eqref{FHI1} with $\eps(j) = \frac1 j$, and with initial data $(\phi_0, \phi_1) \in \O_0$ converges to the following stochastic complex Ginzburg-Landau equation (which was studied in \cite{Trenberth}):

\noi
\begin{align}
\begin{cases}
2 \al  \dt u_0 + (1- \Dl) u_0 + : \! u_0 ^{n+1} \cj{u_0}^{n} \! :  \,  = 2 \sqrt{\Re(\al)}  \xi,    \\
u_0|_{t=0}= \phi_0,
\end{cases}
\quad (x,t) \in \T^2 \times \R_+,
\label{FHI2}
\end{align}

\noi
$\PP$-almost surely in $C \big([0,T]; H^{-\s}(\T^2 )\big) $ (for any $T, \s >0$). See \cite[Theorem 3]{FHI}. This result is similar to Theorems \ref{THM1}, \ref{THM:GWP} and \ref{THM:sinGWP}. There are however some differences with these results, which we describe in details below.

\smallskip

\noi
(i) In \cite{FHI}, the authors treated polynomial nonlinearities but we also study the sine nonlinearity - the so-called sine-Gordon model in \eqref{rSdSG}.

\smallskip

\noi
(ii) In \cite{FHI}, they considered Gibbs measure initial data. On the other hand, in Theorems \ref{THM1}, \ref{THM:GWP} and \ref{THM:sinGWP}, we consider deterministic initial data. In the polynomial case, we establish in Theorem \ref{THM:GWP}, a pathwise (asymptotic) global-in-time approximation despite the lack of a global well-posedness theory for our equations (except for the cubic case, see Remark \ref{RMK:cubic} above). Furthermore, for the sine nonlinearity \eqref{SdSG}, we prove a pathwise global-in-time convergence result in Theorem \ref{THM:sinGWP}. 


\smallskip

\noi
(iii) In \cite{FHI}, the authors treated the convergence problem in the coordinates $(u, \eps \dt u)$ with $O(1)$ initial data $(\phi_0, \phi_1)$. On the other hand, we work in the coordinates $(u, \dt u)$ with the same $O(1)$ initial data $(\phi_0, \phi_1)$, which would correspond to the scaled initial data $(\phi_0, \eps^{-1} \phi_1)$ in the coordinates $(u, \eps \dt u)$. This gives rise to some issue when controlling the time derivative of our solutions. See the discussion above Proposition \ref{PROP:LWP2}.

\smallskip

\noi
(iv) In \cite{FHI}, the convergence was proven for the sequence $\{ u_{j^{-1}} \} $ (i.e. along the discrete sequence $\eps (j) = \frac 1 j$). In the current work, we prove the convergence according to the continuous parameter $\eps \in (0,1]$. This is done by applying a bi-parameter Kolmogorov continuity criterion \cite[Theorem 2.1]{Baldi}. This appears to be also new in the literature related to the Smoluchowski-Kramers approximation. See for instance \cite{CF1, CF2}.
%
\end{remark}

The paper is organized as follows. In Section \ref{SEC:2} we review some useful results from Probability theory and some deterministic estimates. In Section \ref{SEC3}, we study the convergence of the linear flows and Duhamel integrals defined in \eqref{O1a}, \eqref{OP1} below.  In Section \ref{SEC2}, we turn our attention to the polynomial model \eqref{rSNLW} and present proofs of Theorems \ref{THM1} and \ref{THM:GWP}. Finally, in Section \ref{SEC6}, we look at the sine-Gordon model \eqref{rSdSG} and prove Theorem \ref{THM:sinGWP}.
\noi
\section{Notations and preliminary lemmas}
\label{SEC:2}

In this section, we introduce some notations and go over basic lemmas.

\noi
\subsection{Notations} Here, we introduce the notations that are used throughout the paper.

\smallskip

\noi
{\bf $\bullet$ Preliminary notations.} We write $ A \les B $ to denote an estimate of the form $ A \leq CB $. 
Similarly, we write  $ A \sim B $ to denote $ A \les B $ and $ B \les A $ and use $ A \ll B $ 
when we have $A \leq c B$ for small $c > 0$. We may write $A \les_\ta B$ for $A \leq C B$ with $C = C(\ta)$ if we want to emphasize the dependence of the implicit constant on some parameter $\ta$. Given two functions $f$ and $g$ on $\T^2$, we write 

\noi
\begin{align*}
f \approx g,
\end{align*}

\noi
if there exists two constants $c_1, c_2 \in \R$ such that $f(x) + c_1 \le g(x) \le f(x) + c_2$ for any $x \in \T^2 \setminus \{0\}$.

We introduce the set of parameters $I$ given by 

\noi
\begin{align}
I = (0,\infty) \setminus \Big\{ \frac{1}{2 \jb n}: n \in \Z^2 \Big\}.
\label{setI}
\end{align}

\noi
As discussed in Remark \ref{RMK:set}, the set $I$ is the natural set of $\eps$'s for which the operator $\D_\eps$ is a priori well-defined. In Lemma \ref{LEM:F3} below, we however show that $\eps \mapsto \D_\eps$ can be naturally extended to the half-line $(0, \infty)$.

Lastly, given a set $A$ we denote by $\ind_A$ the indicator function of $A$.

\smallskip

\noi
{\bf $\bul$ Fourier transforms.} We set for $n \in \Z^2$,

\noi
\begin{align*}
e_n(x) \deff \frac{1}{2\pi} e^{in \cdot x}.
\end{align*}

\noi
for the orthonormal Fourier basis in $L^2(\T^2)$. The spatial Fourier transform of the function $f$ is denoted by $\ft f$ (or by $\mathcal{F}(f)$) and is defined by

\noi
\begin{align*}
\ft f (n) = \int_{\T^2} f(x) e_n (x) dx, \quad n \in \Z^2.
\end{align*}

\noi
We also denote by $\F^{-1}(\{a_n\}_{n\in \Z^2})$ the inverse Fourier transform of the sequence $\{a_n\}_{n \in \Z^2}$ given by

\noi
\begin{align*}
\F^{-1}(\{a_n\}_{n\in \Z^2})(x) = \sum_{n\in\Z^2} a_n e_n(x).
\end{align*}

\noi
The convolution product is then given by 

\noi
\begin{align*}
(f \ast g)(x) = \frac{1}{2 \pi} \int_{\T^2} f(y) g(x-y) dy,
\end{align*}

\noi
so that $\Ft (f \ast g) = \Ft (f) \Ft(g)$.

Next, we recall the Poisson summation formula; see \cite{Grafakos1}. Let $f \in L^1 (\R^2)$ such that (i) $|f(x)| \les \jb{x}^{-2 - \eta}$ for some $\eta >0$ and any $x \in \R^2$, and (ii) $\sum_{n \in \Z^2} |\ft f (n)| < \infty$. Then, we have

\noi
\begin{align}
\sum_{n \in \Z^2} \ft f (n) e_n (x) = \sum_{\l \in \Z^2} f(x + 2 \pi \l),
\label{poisson}
\end{align}

\noi
for any $x \in \R ^2$.

\smallskip

\noi
{\bf $\bul$ Frequency projections.} In addition to the projectors $\Ph$, $\Pl$ and $\P_{N}$ defined in \eqref{proj1} and \eqref{proj1b} respectively, we also define $\Pll$ and $\Phh$ by

\noi
\begin{align}
\begin{split}
\Pll  f & =  \F^{-1} (\ind_{\jb{n} \leq (1+\ta) \cdot (2\eps)^{-1}} \ft f(n)), \\
\Phh  f & =  \F^{-1} (\ind_{\jb{n} > (1+\ta) \cdot (2\eps)^{-1}} \ft f(n)),
\end{split}
\label{proj2}
\end{align}

\noi
for some $ 0 < \ta \ll 1$, which is chosen to be much smaller than other fixed (i.e. not $\eps$) parameters.

\smallskip

\noi
{\bf $\bul$ Function spaces.} For $s \in \R$, the space $H^s(\T^2)$ denotes the usual $L^2 (\T^2)$-based Sobolev space and we define $\H^s(\T^2)$ by $\H^s(\T^2) := H^s(\T^2) \times H^{s-1}(\T^2) $. We also use shortcut notations such as $L^{\infty}_T H^s_x$ and $C_{\eps,T} H^s_x$ (for functions of the form $f = f(\eps,t,x)$) for $L^{\infty} \big( [0,T]; H^s(\T^2) \big)$ and $C \big( [0,1] \times [0,T]; H^s(\T^2) \big)$ respectively, etc.

\subsection{Preliminary results from stochastic analysis}


In this subsection, 
we recall some basic tools from probability theory and Euclidean quantum field theory
(\cite{Kuo, Nu, Shige, Simon}).
First, 
recall the Hermite polynomials $H_k(x; \s)$ 
defined through the generating function:
\begin{equation*}
F(t, x; \s) \stackrel{\text{def}}{=}  e^{tx - \frac{1}{2}\s t^2} = \sum_{k = 0}^\infty \frac{t^k}{k!} H_k(x;\s).
 \end{equation*}
	
\noi
For readers' convenience, we write out the first few Hermite polynomials:
\begin{align*}
\begin{split}
& H_0(x; \s) = 1, 
\quad 
H_1(x; \s) = x, 
\quad
H_2(x; \s) = x^2 - \s,   
\quad
 H_3(x; \s) = x^3 - 3\s x.
\end{split}
\end{align*}
	
\noi
Note that the Hermite polynomials verify the following standard identity:

\noi
\begin{align*}
H_k(x+y;\s) = \sum_{\l = 0}^k x^{k-\l} H_{\l}(y; \s). 
\end{align*}

Next, we recall the Wiener chaos estimate.
Let $(H, B, \mu)$ be an abstract Wiener space.
Namely, $\mu$ is a Gaussian measure on a separable Banach space $B$
with $H \subset B$ as its Cameron-Martin space.
Given  a complete orthonormal system $\{e_j \}_{ j \in \N} \subset B^*$ of $H^* = H$, 
we  define a polynomial chaos of order
$k$ to be an element of the form $\prod_{j = 1}^\infty H_{k_j}(\jb{x, e_j})$, 
where $x \in B$, $k_j \ne 0$ for only finitely many $j$'s, $k= \sum_{j = 1}^\infty k_j$, 
$H_{k_j}$ is the Hermite polynomial of degree $k_j$, 
and $\jb{\cdot, \cdot} = \vphantom{|}_B \jb{\cdot, \cdot}_{B^*}$ denotes the $B$-$B^*$ duality pairing.
We then 
denote the closure  of the span of
polynomial chaoses of order $k$ 
under $L^2(B, \mu)$ by $\mathcal{H}_k$.
The elements in $\H_k$ 
are called homogeneous Wiener chaoses of order $k$.
We also set
\[ \H_{\leq k} = \bigoplus_{j = 0}^k \H_j\]

\noi
 for $k \in \N$.

Let $L = \Dl -x \cdot \nabla$ be 
 the Ornstein-Uhlenbeck operator.\footnote{For simplicity, 
 we write the definition of the Ornstein-Uhlenbeck operator $L$
 when $B = \R^d$.}
Then, 
it is known that 
any element in $\mathcal H_k$ 
is an eigenfunction of $L$ with eigenvalue $-k$.
Then, as a consequence
of the  hypercontractivity of the Ornstein-Uhlenbeck
semigroup $U(t) = e^{tL}$ due to Nelson \cite{Nelson2}, 
we have the following Wiener chaos estimate
\cite[Theorem~I.22]{Simon}.
See also \cite[Proposition~2.4]{TTz}.

\begin{lemma}\label{LEM:hyp}
Let $k \in \N$.
Then, we have
\begin{equation*}
\|X \|_{L^p(\O)} \leq (p-1)^\frac{k}{2} \|X\|_{L^2(\O)}
 \end{equation*}
 
 \noi
 for any $p \geq 2$
 and any $X \in \H_{\leq k}$.
\end{lemma}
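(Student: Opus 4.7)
The plan is to deduce the lemma from Nelson's hypercontractivity of the Ornstein--Uhlenbeck semigroup $U(t) = e^{tL}$, combined with the spectral decomposition of $\H_{\le k}$.

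\smallskip

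\textbf{Step 1 (Nelson's hypercontractivity as a black box).} I would invoke Nelson's theorem \cite{Nelson2}: for $1 < q \le p < \infty$ and $t \ge 0$ with $e^{2t} \ge \tfrac{p-1}{q-1}$, the semigroup $U(t)$ is a contraction from $L^q(B,\mu)$ into $L^p(B,\mu)$. I will only use the case $q=2$, so the condition reads $e^{2t} \ge p-1$. This is the deep analytic input; everything else is bookkeeping.

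\smallskip

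\textbf{Step 2 (Spectral structure).} Recall from the statement that every element of $\mathcal H_j$ is an eigenfunction of $L$ with eigenvalue $-j$, and that the spaces $\{\mathcal H_j\}_{j\in\N}$ are mutually orthogonal in $L^2(B,\mu)$. For $X \in \H_{\le k}$, write the orthogonal decomposition
\[
X = \sum_{j=0}^{k} X_j, \qquad X_j \in \mathcal H_j,
\qquad \|X\|_{L^2}^2 = \sum_{j=0}^{k} \|X_j\|_{L^2}^2.
\]
By the eigenvalue property, $U(t) X_j = e^{-jt} X_j$ for every $t \ge 0$.

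\smallskip

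\textbf{Step 3 (Reverse the semigroup on $\H_{\le k}$).} Fix $p \ge 2$ and choose $t = \tfrac12 \log(p-1)$, so that the hypercontractivity condition $e^{2t} = p-1$ is saturated. Define
\[
Y \deff \sum_{j=0}^{k} e^{jt} X_j \in \H_{\le k}.
\]
Then $U(t) Y = \sum_j e^{jt} e^{-jt} X_j = X$, and by orthogonality of the chaoses,
\[
\|Y\|_{L^2}^2 = \sum_{j=0}^{k} e^{2jt} \|X_j\|_{L^2}^2 \le e^{2kt} \sum_{j=0}^{k} \|X_j\|_{L^2}^2 = (p-1)^{k} \|X\|_{L^2}^2.
\]

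\smallskip

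\textbf{Step 4 (Conclude).} Applying hypercontractivity from Step~1 with $q=2$,
\[
\|X\|_{L^p} = \|U(t) Y\|_{L^p} \le \|Y\|_{L^2} \le (p-1)^{k/2} \|X\|_{L^2},
\]
which is the claimed bound. The case $p=2$ is trivial.

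\smallskip

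The only nontrivial step is Step~1; everything else is a clean algebraic trick using the spectral decomposition on $\H_{\le k}$ to remove any extra $k$-dependent constant that a naive triangle-inequality argument over the orthogonal summands would produce. Since Nelson's hypercontractivity is a standard, well-referenced result (see \cite[Thm.~I.22]{Simon}), the lemma is essentially a one-paragraph deduction, and I expect no real obstacle beyond citing it.
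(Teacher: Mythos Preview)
Your proof is correct and follows exactly the standard deduction from Nelson's hypercontractivity that the paper has in mind; in fact the paper does not give its own proof but simply states the lemma as a consequence of hypercontractivity, citing \cite{Nelson2} and \cite[Theorem~I.22]{Simon}. Your Steps~2--4 spell out precisely the spectral-inversion trick on $\H_{\le k}$ that yields the sharp constant $(p-1)^{k/2}$, so there is nothing to add.
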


We recall the following property of Wick products; see \cite[Theorem $\1$.3]{Simon} for a proof.

\begin{lemma}\label{LEM:Wick}
Let $f$ and $g$ be jointly Gaussian random variables with variances $\s_f$
and $\s_g$.
Then, we have 
\begin{align*}
\E\big[ H_k(f; \s_f) H_m(g; \s_g)\big] = \dl_{km} k! \big\{\E[ f g] \big\}^k.
\end{align*}
\end{lemma}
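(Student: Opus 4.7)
The plan is to use the generating function identity $F(t,x;\sigma) = e^{tx - \frac{1}{2}\sigma t^2} = \sum_{k=0}^\infty \frac{t^k}{k!} H_k(x;\sigma)$ from the preceding paragraph to reduce the identity to a computation of the moment generating function of a bivariate Gaussian. This is the cleanest way to handle both the orthogonality (when $k \neq m$) and the value of the $L^2$ inner product (when $k = m$) in a single stroke.

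First, I would compute $\E[F(s,f;\sigma_f) F(t,g;\sigma_g)]$ in two ways. Since $f$ and $g$ are jointly Gaussian (or, if only individual Gaussianity is assumed, one can interpret the statement under the joint-Gaussian hypothesis implicit in the covariance $\E[fg]$; the only nontrivial case for the lemma's applications is when the pair is in a Gaussian Hilbert space), the linear combination $sf + tg$ is Gaussian with mean zero and variance $s^2\sigma_f + 2st\,\E[fg] + t^2\sigma_g$. Hence
\begin{align*}
\E\bigl[F(s,f;\sigma_f) F(t,g;\sigma_g)\bigr]
&= e^{-\frac12 \sigma_f s^2 - \frac12 \sigma_g t^2}\,\E\bigl[e^{sf + tg}\bigr] \\
&= e^{-\frac12 \sigma_f s^2 - \frac12 \sigma_g t^2}\, e^{\frac12(s^2\sigma_f + 2st\,\E[fg] + t^2\sigma_g)} \\
&= e^{st\,\E[fg]} = \sum_{k=0}^\infty \frac{(st)^k}{k!} \bigl\{\E[fg]\bigr\}^k.
\end{align*}

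On the other hand, expanding the generating functions (and justifying the interchange of summation and expectation by absolute convergence of the series in $L^2(\Omega)$, which follows from Gaussian moment bounds) gives
\begin{align*}
\E\bigl[F(s,f;\sigma_f) F(t,g;\sigma_g)\bigr]
= \sum_{k,m \geq 0} \frac{s^k t^m}{k!\,m!}\, \E\bigl[H_k(f;\sigma_f) H_m(g;\sigma_g)\bigr].
\end{align*}

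Finally, I would equate the coefficients of $s^k t^m$ in the two power-series expressions. The right-hand side of the first computation only contains monomials with $k = m$, which immediately yields $\E[H_k(f;\sigma_f) H_m(g;\sigma_g)] = 0$ when $k \neq m$. For $k = m$, comparing the coefficient of $(st)^k$ gives $\frac{1}{(k!)^2} \E[H_k(f;\sigma_f) H_k(g;\sigma_g)] = \frac{1}{k!}\{\E[fg]\}^k$, i.e.\ $\E[H_k(f;\sigma_f) H_k(g;\sigma_g)] = k! \{\E[fg]\}^k$, as claimed. There is no real obstacle here; the only point that needs minor care is the justification for interchanging sum and expectation, which is standard via dominated convergence using the bound $|H_k(x;\sigma)| \lesssim_k (1+|x|)^k$ and Gaussian integrability.
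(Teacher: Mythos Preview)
Your generating-function argument is correct and is the standard proof of this identity. The paper does not actually supply a proof of this lemma; it is stated without proof as a basic tool from probability theory and Euclidean quantum field theory, with references to \cite{Kuo, Nu, Shige, Simon}. Your approach is precisely the one found in those references. Your remark that the result requires $f$ and $g$ to be \emph{jointly} Gaussian (and mean zero) is well taken: the lemma as stated only says each is Gaussian, but the intended hypothesis---implicit from the context of Wiener chaos---is that $(f,g)$ lies in a common Gaussian Hilbert space.
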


We now state a two-dimensional version of the usual Kolmogorov continuity criterion for readers' convenience. Before doing so, let us recall that given an index set $T$, we say that a stochastic process $\{ \wt X_t \}_{t \in T}$ is a modification of another process $\{ X_t \}_{t \in T}$ (both defined on the same probability space $(\O, \PP)$) if for any $t \in T$, we have $\PP \big( X_t = \wt X_t \big) =1$.

\noi
\begin{lemma}[bi-parameter Kolmogorov continuity criterion]\label{LEM:kol}
Let $I \times J$ be two intervals of $\R$. Let $\{ X(\eps,t) \}_{(\eps, t) \in I \times J}$ be a stochastic process defined on a probability space $(\O, \PP)$ and taking values in a complete metric space $(E,d)$. Assume that there exist real numbers $q, \al >0$ and $A >0$ such that 

\noi
\begin{align*}
\E \big[ d \big( X(\eps_1, t_1), X(\eps_2, t_2)  \big)^q \big] \le A   \| (\eps_1 - \eps_2, t_1 - t_2) \|_2^{2 + \al},
\end{align*}

\noi
for any $(\eps_1, \eps_2) \in I^2$ and $(t_1,t_2) \in J^2$ and where $\| \cdot \|_2$ denotes the canonical Euclidean norm on $\R^2$. Then, there exists a modification $\wt X$ of $X$ whose sample paths are H\"older continuous on $I \times J$ with exponent $\dl$ for any $0 < \dl = \dl(\al,q)$. Namely, we have

\noi
\begin{align*}
d \big( X^\om (\eps_1, t_1), X^\om (\eps_2, t_2)  \big) \les_{\om} A^{\g}  \| (\eps_1 - \eps_2, t_1 - t_2) \|_2^{\dl},
\end{align*}

\noi
for some $\g >0$ and for any $\om \in \O_0$, where $\O_0 \subset \O$ is a full $\PP$-probability set. Furthermore, we have the following tail estimate:

\noi
\begin{align*}
\PP \bigg( \frac{d \big( X^\om (\eps_1, t_1), X^\om (\eps_2, t_2)  \big)}{A^{\g}  \| (\eps_1 - \eps_2, t_1 - t_2) \|_2^{\dl}} > M  \bigg) \les M^{-q}.
\end{align*}
\end{lemma}

\noi
The proof of Lemma \ref{LEM:kol} follows from a slight modification of the argument in the proof of \cite[Theorem 2.1]{Baldi}. See \cite{Bass} for the proof of the tail estimate.

\subsection{Deterministic estimates}
We recall the following product estimates.
See \cite{GKO1} for a proof.

\begin{lemma}\label{LEM:bilin}
 Let $0\le s \le 1$.

\smallskip

\noi
\textup{(i)} Suppose that 
 $1<p_j,q_j,r < \infty$, $\frac1{p_j} + \frac1{q_j}= \frac1r$, $j = 1, 2$. 
 Then, we have  
\begin{equation*}  
\| \jb{\nb}^s (fg) \|_{L^r(\T^2)} 
\les \Big( \| f \|_{L^{p_1}(\T^2)} 
\| \jb{\nb}^s g \|_{L^{q_1}(\T^2)} + \| \jb{\nb}^s f \|_{L^{p_2}(\T^2)} 
\|  g \|_{L^{q_2}(\T^2)}\Big).
\end{equation*}

\smallskip

\noi
\textup{(ii)} 
Suppose that  
 $1<p,q,r < \infty$ satisfy the condition:
$\frac1p+\frac1q\leq \frac1r + \frac{s}2 $.
Then, we have
\begin{align*}
\big\| \jb{\nb}^{-s} (fg) \big\|_{L^r(\T^2)} \les \big\| \jb{\nb}^{-s} f \big\|_{L^p(\T^2) } 
\big\| \jb{\nb}^s g \big\|_{L^q(\T^2)}.  
\end{align*}
%
%
\end{lemma}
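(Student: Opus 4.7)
My plan is to prove Lemma \ref{LEM:bilin} by Littlewood--Paley theory, which is the standard toolkit for fractional Leibniz / chain rules and their negative-regularity duals. Throughout, $P_j$ denotes the Littlewood--Paley projection to frequencies $\jb{n}\sim 2^j$, and $S_j = \sum_{k\le j} P_k$ denotes the associated low-frequency cutoff, with the usual convention at $j=0$.

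For part (i), I would expand $fg = \sum_{j,k} P_j f\cdot P_k g$ and split the double sum into the three regimes: high--low ($j\ge k+C$), low--high ($k\ge j+C$), and diagonal ($|j-k|\le C$). In the high--low regime, the output is frequency-localized at $\jb{n}\sim 2^j$, so $\jb{\nb}^s$ may be absorbed into $\jb{\nb}^s f$; applying H\"older with exponents $(p_2,q_2)$ and the Fefferman--Stein / Littlewood--Paley square function characterization of $L^r$ (using $\frac{1}{p_2}+\frac{1}{q_2}=\frac{1}{r}$) yields the second term on the right. The low--high regime is symmetric and gives the first term. For the diagonal piece, either factor may carry $\jb{\nb}^s$; one loses the frequency-localization of the output, but a further $\ell^2$ decomposition (e.g. $P_j f\cdot P_k g = \sum_{l\ge\max(j,k)} P_l(P_j f \cdot P_k g)$) and Bernstein's inequality recover summability.

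For part (ii), I would argue by duality. Writing
\begin{align*}
\bigl\|\jb{\nb}^{-s}(fg)\bigr\|_{L^r(\T^2)}
= \sup_{\|h\|_{L^{r'}}\le 1}\Bigl|\int_{\T^2} f\cdot g\cdot \jb{\nb}^{-s}h\, dx\Bigr|,
\end{align*}
one applies H\"older's inequality with exponents $(p, \tilde q, \tilde r)$ satisfying $\frac{1}{p}+\frac{1}{\tilde q}+\frac{1}{\tilde r}=1$, chosen so that the Sobolev embeddings $\jb{\nb}^s g\in L^q \hookrightarrow L^{\tilde q}$ and $\jb{\nb}^{-s}h\in L^{r'} \hookrightarrow L^{\tilde r}$ hold. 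The scaling condition $\frac{1}{p}+\frac{1}{q}\le \frac{1}{r}+\frac{s}{2}$ is exactly what is needed to make these two Sobolev embeddings compatible.

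For part (iii), the Christ--Weinstein fractional chain rule, I would distinguish $0<\al\le 1$ from $\al>1$. For $0<\al<1$, the cleanest route is the integral representation
\begin{align*}
|\nb|^\al u(x) = c_\al \int_{\R^2} \frac{u(x)-u(x+y)}{|y|^{2+\al}}\, dy
\end{align*}
(adapted to the torus via the heat kernel / Poisson semigroup characterization), combined with the pointwise bound $|F(u(x))-F(u(x+y))|\le L\,|u(x)-u(x+y)|$; Minkowski and $L^p$-boundedness then give the estimate. For $\al=1$, the ordinary chain rule and $\|F'\|_{L^\infty}\le L$ suffice. The main obstacle I anticipate is the diagonal sum in (i), where summability requires using orthogonality of the Littlewood--Paley pieces rather than naively summing $\ell^1$; this is also the only step that genuinely uses $s\ge 0$ as opposed to $s>0$, since the endpoint $s=0$ reduces to H\"older and causes no trouble.
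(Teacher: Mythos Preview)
Your outline for (i) and (iii) is essentially correct and standard; the paper itself does not give a proof but simply cites \cite{GKO1}, so there is nothing to compare against beyond noting that paraproduct decomposition is indeed the method used there.

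However, your argument for (ii) has a genuine gap. After writing
\[
\big\|\jb{\nb}^{-s}(fg)\big\|_{L^r}
= \sup_{\|h\|_{L^{r'}}\le 1}\Big|\int_{\T^2} f\, g\, \jb{\nb}^{-s}h\, dx\Big|
\]
and applying H\"older with exponents $(p,\tilde q,\tilde r)$, you obtain a bound of the form $\|f\|_{L^p}\|g\|_{L^{\tilde q}}\|\jb{\nb}^{-s}h\|_{L^{\tilde r}}$. The Sobolev embeddings you invoke then turn this into $\|f\|_{L^p}\|\jb{\nb}^s g\|_{L^q}$, \emph{not} $\|\jb{\nb}^{-s}f\|_{L^p}\|\jb{\nb}^s g\|_{L^q}$. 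Since $\|\jb{\nb}^{-s}f\|_{L^p}\les \|f\|_{L^p}$ but not conversely, you have proved a strictly weaker inequality than the one stated. There is no Sobolev-type embedding that lets you control $\|f\|_{L^{\tilde p}}$ by $\|\jb{\nb}^{-s}f\|_{L^p}$ for $s>0$.

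The correct route for (ii) is again a paraproduct decomposition (this is what \cite{GKO1} does): write $fg$ as low--high, high--low, and high--high pieces and analyze each separately. The key point is that the $\jb{\nb}^{-s}$ on the product and the $\jb{\nb}^{-s}$ on $f$ must be matched at the level of each dyadic block; the high--low piece (where $f$ is at high frequency) is where the negative Sobolev norm of $f$ is genuinely used, and this cannot be captured by a single application of H\"older followed by Sobolev. Alternatively, one can dualize more carefully: set $\tilde f=\jb{\nb}^{-s}f$, move $\jb{\nb}^s$ from $\tilde f$ onto the product $g\cdot\jb{\nb}^{-s}h$ by self-adjointness, and then invoke part (i) on that product; but this still requires the fractional Leibniz rule rather than plain H\"older.
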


Note that
while  Lemma \ref{LEM:bilin} (ii) 
was shown only for 
$\frac1p+\frac1q= \frac1r + \frac{s}2 $
in \cite{GKO1}, 
the general case
$\frac1p+\frac1q\leq \frac1r + \frac{s}2 $
follows from the inclusion $L^{r_1}(\T^2)\subset L^{r_2}(\T^2)$
for $r_1 \geq r_2$.

\smallskip

We record the following elementary result on the regularizing effect of the heat semi-group in Sobolev spaces whose proof is a simple consequence of Plancherel's identity.
\noi
\begin{lemma}\label{LEM:heat}
Let $\al, \be \in \R$ with $\al \ge \be$ and $t >0$. We have

\noi
\begin{align*}
\| P_0(t) f \|_{H^\al (\T^2) } \les t^{- \frac{\al- \be}{2}} \| f \|_{H^\be(\T^2)},
\end{align*}
where $P_0$ is as in \eqref{Lheat}.
\end{lemma}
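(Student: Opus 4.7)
My plan is to reduce the estimate to an elementary scalar inequality via the Fourier transform. Since $P_0(t) = e^{(\Delta - 1)t}$ is a Fourier multiplier, I would first compute, for each $n \in \Z^2$,
\begin{align*}
\widehat{P_0(t) f}(n) = e^{-(1+|n|^2)t}\, \widehat{f}(n) = e^{-\jb{n}^2 t}\, \widehat{f}(n).
\end{align*}
Expanding the Sobolev norm by Plancherel, I would then write
\begin{align*}
\| P_0(t) f\|_{H^\alpha(\T^2)}^2 = \sum_{n \in \Z^2} \jb{n}^{2\alpha} e^{-2\jb{n}^2 t} |\widehat{f}(n)|^2 = \sum_{n \in \Z^2} \bigl( \jb{n}^{2(\alpha-\beta)} e^{-2\jb{n}^2 t}\bigr) \jb{n}^{2\beta} |\widehat{f}(n)|^2,
\end{align*}
so that after pulling out the supremum over $n$ of the parenthesized factor, the remaining sum is exactly $\|f\|_{H^\beta(\T^2)}^2$.

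The main step is then the elementary bound
\begin{align*}
\sup_{n \in \Z^2} \jb{n}^{2(\alpha-\beta)} e^{-2\jb{n}^2 t} \lesssim t^{-(\alpha-\beta)},
\end{align*}
valid for $\alpha \geq \beta$ and $t > 0$. This follows from the one-variable inequality $\sup_{y \geq 0} y^a e^{-cy} = (a/(ec))^{a}$ (for $a \geq 0$, $c>0$) applied with $a = \alpha-\beta$, $c = 2t$, and $y = \jb{n}^2$. (When $\alpha = \beta$ the supremum is bounded by $1$, which is consistent with the claimed power $t^0$.) Combining this with the Plancherel identity above and taking square roots yields the claim. Since the argument is entirely routine and uses no special structure beyond the exponential decay of the heat multiplier, I do not anticipate any real obstacle; the only mild point is to observe that one does \emph{not} need $t \in (0,1]$ because the decay factor $e^{-2t}$ from the mass term only improves the estimate for $t$ large.
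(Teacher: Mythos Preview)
Your proof is correct and is precisely the standard Fourier-multiplier argument one would give for this estimate. The paper does not actually supply a proof of this lemma (it is stated as an elementary recorded fact), so your approach is exactly what is implicitly intended.
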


In the current $L^2_x$-setting, the proof of Lemma \ref{LEM:heat} is an easy application of Plancherel's identity. We include it here for completeness.

\begin{proof} By Plancherel's identity, we have the following bound:

\noi
\begin{align*}
\| P_0(t) f \|_{H^\al (\T^2) } & = \| \jb n ^{\al} e^{-t \jb n ^2} \ft f (n) \|_{\l ^2_n} \\
& \les  t^{- \frac{\al- \be}{2}} \| \jb n ^{\al} \jb n ^{\be - \al} \ft f (n) \|_{\l ^2_n} = t^{- \frac{\al- \be}{2}} \| f \|_{H^\be(\T^2)},
\end{align*}

\noi
where we used the bound $e^{-x} \les x^{- \frac{\al - \be}{2}}$ for any $x >0$.
\end{proof}

\section{Convergence of the deterministic objects}\label{SEC3}

In this section, we study the convergence as $\eps \to 0$ and prove bounds for various deterministic and stochastic objects depending on $\eps \ge 0$. This is needed to prove Theorems \ref{THM1} and \ref{THM:GWP}. More precisely, we first study the deterministic linear objects \eqref{OP1} and \eqref{O1a} defined below. We then proceed with the construction of the Wick powers \eqref{Herm1} uniformly in $\eps \in [0,1]$ and $N \in \N$.

We introduce the following Duhamel operators for $\eps > 0$:

\noi
\begin{align}
\begin{split}
& \I_\eps(F)(t) = \int_{0}^t \eps^{-2}\D_\eps(t -t') F(t') dt' \\
& \I_0(F)(t) = \int_{0}^t P_0(t-t') F(t') dt',
\end{split}
\label{OP1}
\end{align}

\noi
for a space-time function $F$ and $t \in \R$, with $P_0$ and $\D_\eps$ as in \eqref{Lheat} and \eqref{L4}.

In this section, we study the convergence of the linear flow \eqref{L5} and the Duhamel operator \eqref{OP1}.

Let $P_\eps$ be the linear operator associated to the homogeneous linear solution \eqref{X1}. For two distributions $\phi_0$, $\phi_1$, we have

\noi
\begin{align}
P_{\eps}(t)(\phi_0,\phi_1) :=  (\eps^{-2} + \dt)  \D_\eps(t) \phi_0 + \D_\eps(t)\phi_1,
\label{O1a}
\end{align}

\noi
for $t \ge 0$. Hereafter, we identify the operator $P_0$ with $(P_0,0)$ defined by 

\noi
\begin{align*}
(P_0,0)(\phi_0, \phi_1)(t) := P_0(t) \phi_0,
\end{align*}

\noi
for $t\ge 0$.

The main result of this section is the following proposition which provides bounds for both $P_\eps$ and $\I_\eps$.

\noi
\begin{proposition}\label{PROP:det}
Let $s \in \R$ and $T > 0$. For any $\eps \in [0,1]$, we have the following bounds:

\noi
\begin{align}
\begin{split}
& \sup_{\eps \in [0,1]} \|  P_{\eps} (\phi_0, \phi_1) \|_{C_T H^s_x} \les  \|  (\phi_0, \phi_1) \|_{\H^s_x}, \\
& \sup_{\eps \in [0,1]} \| \I_\eps(F) \|_{C_T H^s_x} \les T^\frac12 \|  F \|_{L^{\infty}_T H^{s-1}_x},
\end{split}
\label{OPunif}
\end{align} 

\noi
for any functions $\phi_0, \phi_1$ and $F$. Moreover, for any $0 < \ta \ll 1$, we have

\noi
\begin{align}
\begin{split}
 \| (P_\eps - P_0)(\phi_0, \phi_1) \|_{C_T H^s_x} & \les \eps^{\frac{\ta}{2}} \|  (\phi_0, \phi_1) \|_{\H^{s+\ta}_x}, \\
 \| (\I_\eps- \I_0)(F)  \|_{C_T H^s_x} & \les T^{\frac12} \eps^{\frac{\ta}{2}} \|  F \|_{L^{\infty}_T  H^{s-1+\ta}_x}.
\end{split}
\label{detconv}
\end{align}

\noi
for any functions $\phi_0, \phi_1$ and $F$.
\end{proposition}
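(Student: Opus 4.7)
The plan is to work at the level of Fourier multipliers. Recalling \eqref{O1a}, I write
\[ P_\eps(t)(\phi_0,\phi_1) - P_0(t)\phi_0 = \bigl(A_\eps(t) - P_0(t)\bigr)\phi_0 + \D_\eps(t)\phi_1, \qquad A_\eps(t) := (\eps^{-2}+\dt)\D_\eps(t), \]
and use the analogous decomposition of $\I_\eps - \I_0$ based on the multiplier $\eps^{-2}\D_\eps - P_0$. From \eqref{L1}--\eqref{L2} one reads off the closed-form symbols in the low-frequency regime $\jb n \le (2\eps)^{-1}$:
\begin{align*}
A_\eps(t;n) &= \tfrac12\bigl(1+\tfrac{1}{\sqrt{1-4\jb n^2\eps^2}}\bigr)e^{\Ld_\eps^+ t} + \tfrac12\bigl(1-\tfrac{1}{\sqrt{1-4\jb n^2\eps^2}}\bigr)e^{\Ld_\eps^- t}, \\
\eps^{-2}\D_\eps(t;n) &= \tfrac{e^{\Ld_\eps^+ t} - e^{\Ld_\eps^- t}}{\sqrt{1-4\jb n^2\eps^2}},
\end{align*}
together with the $\sin(\ze_\eps t)/\ze_\eps$ analogues in the high frequencies $\jb n > (2\eps)^{-1}$.

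For the uniform bounds \eqref{OPunif}, I would establish the pointwise-in-$(\eps,n,t)$ symbol estimates
\[ |A_\eps(t;n)| \les 1, \qquad |\jb n\,\D_\eps(t;n)| \les \eps, \qquad |\jb n\cdot \eps^{-2}\D_\eps(t;n)| \les t^{-1/2}. \]
Away from the transition region $\jb n \sim (2\eps)^{-1}$ the denominator $\sqrt{1-4\jb n^2\eps^2}$ is bounded below and the bounds follow from $\Ld_\eps^+ \le -\jb n^2$ and $\Ld_\eps^- \le -\frac{1}{2\eps^2}$, combined with the elementary inequality $\jb n \tau e^{-\jb n^2 \tau} \les \tau^{-1/2}$. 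In the transition region and in the high-frequency regime I revert to the raw expressions $\sinh(\ld_\eps t)/\ld_\eps$ and $\sin(\ze_\eps t)/\ze_\eps$ combined with the exponential damping $e^{-t/(2\eps^2)}$; concretely, for $\eps^{-2}\D_\eps$ I use $|(e^a-e^b)/(a-b)| \le e^{\max(a,b)}$ on the low-frequency side and $|\sin(\ze_\eps t)/\ze_\eps| \le t$ together with $\eps^{-2}t e^{-t/(2\eps^2)} \le 2/e$ otherwise. These multiplier estimates give $\|P_\eps(\cdot)(\phi_0,\phi_1)\|_{L^\infty_T H^s_x} \les \|(\phi_0,\phi_1)\|_{\H^s_x}$ directly, while integrating $\int_0^T \tau^{-1/2}\,d\tau \les T^{1/2}$ yields the Duhamel estimate for $\I_\eps$.

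For the convergence rate \eqref{detconv} I split frequencies at $N = \eps^{-1/2}$. The term $\D_\eps\phi_1$ is already $O(\eps)$ in $H^s$ by the second pointwise bound above, hence a fortiori $\les \eps^{\ta/2}\|\phi_1\|_{H^{s-1+\ta}_x}$. For $(A_\eps - P_0)\phi_0$, in the low-frequency regime $\jb n \le N$ the smallness $\jb n^2\eps^2 \le \eps$ permits a Taylor expansion of $1/\sqrt{1-4\jb n^2\eps^2}$ together with $\Ld_\eps^+ + \jb n^2 = O(\jb n^4\eps^2)$ and the mean value theorem, producing
\[ |A_\eps(t;n) - e^{-\jb n^2 t}| \les \jb n^2 \eps^2\bigl(e^{-\jb n^2 t} + e^{-t/(2\eps^2)}\bigr) + \jb n^4\eps^2\, t\, e^{-\jb n^2 t}. \]
Taking the supremum in $t$ via $\jb n^2 t e^{-\jb n^2 t} \les 1$ gives a $t$-uniform bound $\les \jb n^2 \eps^2$, so the low-frequency contribution is controlled by $N^2\eps^2 \cdot N^{-\ta} = \eps^{1+\ta/2}$ after absorbing the $\jb n^{-\ta}$ factor from the derivative loss. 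In the high-frequency regime $\jb n > N$ the uniform estimate $|A_\eps| + |P_0| \les 1$ together with $\jb n^{-\ta} \le N^{-\ta} = \eps^{\ta/2}$ closes the estimate for $P_\eps - P_0$. The Duhamel difference $\I_\eps - \I_0$ is handled by the same split applied to $\eps^{-2}\D_\eps - P_0$, keeping the $\tau^{-1/2}$ time-behavior that integrates to the advertised $T^{1/2}$ factor.

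The main technical obstacle is the transition region $\jb n \sim (2\eps)^{-1}$, where $1/\sqrt{1-4\jb n^2\eps^2}$ is singular in the closed-form symbol expressions. The cure is to observe that although each of the two terms defining $A_\eps$ individually blows up near the transition, their combination (as well as $\D_\eps$ and $\eps^{-2}\D_\eps$ themselves) extends smoothly in $\eps$ across the boundary $\jb n = (2\eps)^{-1}$, consistently with Lemma~\ref{LEM:F3}. Reverting to the raw $\sinh(\ld_\eps t)/\ld_\eps$ form, which has the limit $t$ at the boundary and merges continuously with the $\sin(\ze_\eps t)/\ze_\eps$ form on the high-frequency side, together with the prefactor $e^{-t/(2\eps^2)}$, removes the apparent singularity and restores uniform control throughout.
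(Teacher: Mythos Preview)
Your approach is correct and is essentially the paper's own: both arguments establish pointwise symbol bounds on $A_\eps$, $\D_\eps$, and $\eps^{-2}\D_\eps$ (the paper packages these as Lemmas~\ref{LEM:F1}--\ref{LEM:F2} and the operator estimates of Lemma~\ref{COR:OP}), and obtain the convergence rate by a frequency split with a Taylor expansion of the symbols below the threshold and the $\jb n^{-\ta}$ derivative loss above it. The only differences are cosmetic---your threshold $N=\eps^{-1/2}$ versus the paper's $\eps^{-1+\ta}$, and your inline treatment of the transition region via the raw $\sinh/\sin$ forms is precisely what Lemmas~\ref{LEM:F3} and~\ref{LEM:F1} do.
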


We deduce from Proposition \ref{PROP:det}, the following bounds on the operators $\{ P_\eps \}_{\eps \in [0,1]}$ and $\{ \I_\eps \}_{\eps \in [0,1]}$, viewed as continuous objects in $\eps \in [0,1]$.

\noi
\begin{corollary}\label{COR:det}
Let $s \in \R$, $T > 0$ and $\ta >0$. We have the following bounds:

\noi
\begin{align}
& \| P_\eps(t) (\phi_0,\phi_1) \|_{C_{\eps,T} H^s_x } \les \|  (\phi_0, \phi_1) \|_{\H^{s}_x}, \label{C11} \\
& \| \I_\eps(t) \big(F(\eps, \cdot) \big) \|_{C_{\eps,T} H^s_x } \les T^{\frac12}  \| F \|_{C_{\eps,T}H^{s-1}_x}, \label{C12}
\end{align}

\noi
for any functions $\phi_0, \phi_1$ and $F$.
\end{corollary}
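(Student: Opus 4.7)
The plan is to deduce the corollary from Proposition~\ref{PROP:det} via a triangle-inequality argument. The uniform bounds in \eqref{C11}--\eqref{C12} follow immediately from \eqref{OPunif} together with the embedding $\H^{s+\ta}(\T^2) \hookrightarrow \H^s(\T^2)$. The actual content of the statement is therefore the joint continuity of the two maps
\begin{equation*}
(\eps,t) \longmapsto P_\eps(t)(\phi_0,\phi_1),
\qquad
(\eps,t) \longmapsto \I_\eps\bigl(F(\eps,\cdot)\bigr)(t)
\end{equation*}
from $[0,1] \times [0,T]$ into $H^s(\T^2)$.

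To treat the first map, I would fix $(\eps_0,t_0) \in [0,1] \times [0,T]$ and insert $P_{\eps_0}(t)(\phi_0,\phi_1)$ to split
\begin{equation*}
\bigl\|P_\eps(t)(\phi_0,\phi_1) - P_{\eps_0}(t_0)(\phi_0,\phi_1)\bigr\|_{H^s}
\leq \bigl\|(P_\eps(t) - P_{\eps_0}(t))(\phi_0,\phi_1)\bigr\|_{H^s}
+ \bigl\|P_{\eps_0}(t)(\phi_0,\phi_1) - P_{\eps_0}(t_0)(\phi_0,\phi_1)\bigr\|_{H^s}.
\end{equation*}
The second summand (continuity in $t$ at fixed $\eps_0$) follows from the explicit representations \eqref{L1}--\eqref{L5} when $\eps_0 > 0$ and from strong continuity of the heat semigroup \eqref{Lheat} when $\eps_0 = 0$, using only the smoothness of $\phi_0$ and $\phi_1$. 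For the first summand I would distinguish two cases. At $\eps_0 = 0$, estimate \eqref{detconv} with the chosen $\ta>0$ yields
\begin{equation*}
\bigl\|(P_\eps(t) - P_0(t))(\phi_0,\phi_1)\bigr\|_{H^s}
\les \eps^{\ta/2}\|(\phi_0,\phi_1)\|_{\H^{s+\ta}},
\end{equation*}
which vanishes as $\eps \to 0$. At an interior point $\eps_0 \in (0,1]$, Lemma~\ref{LEM:F3} gives the continuity of $\eps \mapsto \D_\eps$ (and of its time derivative) as a Fourier multiplier pointwise in $n \in \Z^2$; combined with the uniform Fourier-multiplier bounds implicit in \eqref{OPunif} and dominated convergence applied mode by mode, this gives continuity of $\eps \mapsto P_\eps(t)$ as an operator $\H^{s+\ta} \to H^s$ at $\eps_0$.

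The Duhamel term $\I_\eps(F(\eps,\cdot))$ is handled in parallel, after the additional split
\begin{equation*}
\I_\eps\bigl(F(\eps,\cdot)\bigr) - \I_{\eps_0}\bigl(F(\eps_0,\cdot)\bigr)
= \I_\eps\bigl(F(\eps,\cdot) - F(\eps_0,\cdot)\bigr) + (\I_\eps - \I_{\eps_0})\bigl(F(\eps_0,\cdot)\bigr),
\end{equation*}
which separates the $\eps$-dependence of the integrand from that of the propagator. The first term is controlled by the uniform bound \eqref{OPunif} together with the assumed joint continuity of $F$ encoded in $F \in C_{\eps,T}H^{s-1+\ta}_x$, while the second is handled exactly as for $P_\eps$ (using \eqref{detconv} at $\eps_0 = 0$ and Lemma~\ref{LEM:F3} at $\eps_0 > 0$).

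The main obstacle is really upstream of this corollary: it is the sharp convergence rate $\eps^{\ta/2}$ in \eqref{detconv} (which is what forces the $\ta$-derivative loss in the corollary) and the continuous extension of $\D_\eps$ to $\eps_0 \in (0,1]$ supplied by Lemma~\ref{LEM:F3}. Granted those two ingredients, the present corollary is a routine triangle-inequality argument.
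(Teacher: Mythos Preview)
Your proposal is correct and follows essentially the same approach as the paper: the uniform bounds come from \eqref{OPunif}, continuity in $\eps$ on $(0,1]$ is obtained from Lemma~\ref{LEM:F3} (smoothness of the multiplier $(\eps,t)\mapsto\ft{\D_\eps}(n,t)$) combined with dominated convergence, and continuity at $\eps=0$ is read off from \eqref{detconv}. The paper's argument is more terse but identical in substance; your additional split of the Duhamel term to isolate the $\eps$-dependence of $F$ is a natural elaboration that the paper subsumes under ``similar arguments''.
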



\noi
\begin{remark}\rm
By combining using the bounds in Proposition \ref{PROP:det} it is easy to prove the convergence of the solution of the nonhomegeneous linear damped wave equation \eqref{X4} to that of linear heat equation \eqref{X5} as $\eps \to 0$, provided the forcing term $F$ is smooth enough. This makes the formal derivation of the Smoluchowski-Kramers approximation in Section \ref{SEC1} rigorous.
\end{remark}

We prove several useful lemmas first and postpone the proof of Proposition \ref{PROP:det} and Corollary \ref{COR:det} to the end of the section.

Let $\{ \ft{\D_\eps} (n,t) \}_{n \in \Z^2}$ be the symbol associated to the multiplier $\D_\eps(t)$ defined in \eqref{L4}. We define $ \eta \in \R^2 \mapsto  \ft{\D_\eps} (\eta,t) $ its natural extension to $\R^2$ given by

\noi
\begin{align}
\ft{\D_\eps} (\eta,t) := \begin{cases}
 e^{- \frac{t}{2 \eps^2}} \frac{\sinh( \ld_\eps(\eta) t ) }{\ld_\eps(\eta)} \, & \text{if $\jb \eta \le (2 \eps)^{-1} $} \\
e^{- \frac{t}{2 \eps^2}} \frac{\sin( \ze_\eps(\eta) t ) }{\ze_\eps(\eta)} \,  &\text{if $\jb \eta > (2 \eps)^{-1} $}\end{cases},
\label{De1}
\end{align}

\noi
where $ \eta \mapsto \ld_\eps(\eta) $ and $\eta \mapsto \ze_\eps(\eta)$ are the obvious extensions to $\R^2$ of the functions $\ld_\eps$ and $\ze_\eps$ defined in \eqref{ld2} and \eqref{ld3}, respectively. 

From \eqref{De1}, \eqref{ld2} and \eqref{ld3}, it might seem that $(\eps, t, \eta) \mapsto \ft \D_\eps (\eta, t)$ is ill-defined on the hypersurface $\{ (\eps, \eta ) \in (0, \infty) \times \R^2  : \eps = \frac{1}{2 \jb \eta} \}$. We however prove in the next lemma that $(\eps, t, \eta) \mapsto \ft \D_\eps (\eta, t)$ can in fact be extended to a smooth function on $(0,\infty) \times \R_+ \times \R^2$ and provide a control on its derivatives in $\eps$ and $\eta$.

For an integer $p \ge 1$ and a multi-index $\al \in \{1,2\}^p$, we denote by $|\al| = p$ its length. 

\noi
\begin{lemma}\label{LEM:F3}
Recall the definition of the set $I$ in \eqref{setI}. The function $(\eps,t, \eta) \in I \times \R_+ \times \R^2 \mapsto \ft{\D_\eps}(\eta,t) $ can be extended to a $C^\infty$ function on $(0,\infty) \times \R_+ \times \R^2$. Moreover, we have the following bound:

\noi
\begin{align}
&  | \partial^{\al}_{\eta} \ft{  \D_\eps }(\eta,t) | \les_{\al} e^{- \frac{t}{2 \eps^2}} t^{|\al| +1} \eps^{- |\al|} \sum_{p = 1}^{|\al|} (1 + |t \eps^{-1} \eta|^p) \big( \ind_{\jb \eta \le (2\eps)^{-1}}  \, e^{t \ld_\eps(\eta)}  + \ind_{\jb \eta > (2\eps)^{-1}} \big) \label{fun14} \\
& | \partial_\eps \ft{  \D_\eps }(\eta,t)  | \les  \eps^{-5} \jb \eta ^2,  \label{B1}
\end{align}

\noi
for any $\eps \in (0,\infty)$, $\eta \in \R^2$, $t \ge 0$ and multi-index $\al$. We highlight that the implicit constants in \eqref{fun14} and \eqref{B1} are uniform in the parameters $(\eps, t, \eta) \in I$.
\end{lemma}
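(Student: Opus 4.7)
The key move is to unify the two branches of \eqref{De1} through a single entire-function representation. Defining $A_\eps(\eta) := 1 - 4 \jb{\eta}^2 \eps^2$, one has $\ld_\eps(\eta)^2 = A_\eps(\eta)/(4 \eps^4)$ when $A_\eps \geq 0$ and $-\ze_\eps(\eta)^2 = A_\eps(\eta)/(4\eps^4)$ when $A_\eps \leq 0$. Introducing the entire function $\Phi(z) := \sum_{k \geq 0} z^k/(2k+1)!$, which coincides with $\sinh(\sqrt{z})/\sqrt{z}$ for $z > 0$ and with $\sin(\sqrt{-z})/\sqrt{-z}$ for $z < 0$, both branches of \eqref{De1} collapse to the single formula
\begin{align*}
\ft{\D_\eps}(\eta,t) = t\, e^{- t/(2\eps^2)}\, \Phi\!\left( \frac{A_\eps(\eta)\, t^2}{4\eps^4} \right).
\end{align*}
Since the right-hand side is manifestly smooth in $(\eps, t, \eta) \in (0,\infty) \times \R_+ \times \R^2$, this delivers the $C^{\infty}$ extension across the critical locus $\{\jb{\eta} = (2\eps)^{-1}\}$.

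For \eqref{fun14} I would apply Faà di Bruno's formula to $\Phi(u)$ with $u = A_\eps(\eta)\, t^2/(4\eps^4)$. The decisive simplification is that $u$ is quadratic in $\eta$: $\partial_{\eta_i} u = -(2t^2/\eps^2) \eta_i$, $\partial_{\eta_i \eta_j} u = -(2t^2/\eps^2) \dl_{ij}$, and higher $\eta$-derivatives vanish. Hence a partition of $\{1,\dots,|\al|\}$ with $j_1$ singletons and $j_2$ pairs (so $j_1 + 2 j_2 = |\al|$ and $|\pi| = j_1 + j_2$) contributes a term of size $\les |\Phi^{(|\pi|)}(u)| \cdot t^{|\al|} \eps^{-|\al|} \cdot (t|\eta|/\eps)^{2|\pi| - |\al|}$. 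The elementary coefficient inequality $\frac{(m+j)!}{m!(2m+2j+1)!} \leq 2^{-j}/(2m+1)!$ (obtained by comparing the even factors $2(m+1), \ldots, 2(m+j)$ of $(2m+2j+1)!/(2m+1)!$ with $(m+j)!/m!$) yields $|\Phi^{(j)}(u)| \leq 2^{-j} \Phi(|u|)$, hence $|\Phi^{(j)}(u)| \leq 2^{-j} e^{\ld_\eps t}$ in the low-frequency regime. In the high-frequency regime one instead uses the representation $\Phi(u) = \sin(\sqrt{-u})/\sqrt{-u}$ together with the ODE $4 u \Phi''(u) + 6 \Phi'(u) - \Phi(u) = 0$ (derived from $v^2 y'' + 2 v y' + v^2 y = 0$ for $y(v) = \sin(v)/v$) and induction on $j$ to get $|\Phi^{(j)}(u)| \les_j 1$ uniformly for $u \leq 0$. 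Summing over partitions with $p := 2|\pi| - |\al| \in \{0, \dots, |\al|\}$ then reproduces \eqref{fun14}.

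For \eqref{B1}, differentiating the representation in $\eps$ and using $\partial_\eps u = (t^2/\eps^5)(2\jb{\eta}^2 \eps^2 - 1)$ gives
\begin{align*}
\partial_\eps \ft{\D_\eps} = \frac{t^2}{\eps^3} e^{-t/(2\eps^2)} \Phi(u) + \frac{t^3}{\eps^5}(2 \jb{\eta}^2 \eps^2 - 1)\, e^{-t/(2\eps^2)}\, \Phi'(u).
\end{align*}
In the high-frequency regime $|\Phi|, |\Phi'| \les 1$ and $\sup_{t \geq 0} t^k e^{-t/(2\eps^2)} \les \eps^{2k}$ deliver a bound $\les \eps \jb{\eta}^2$. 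The bottleneck is the low-frequency regime: using $1 - \sqrt{1-x} \geq x/2$ for $x \in [0,1]$ one obtains the crucial coercivity $1/(2\eps^2) - \ld_\eps \geq \jb{\eta}^2$, and combining $|\Phi|, |\Phi'| \leq e^{\ld_\eps t}$ with $\sup_{t \geq 0} t^k e^{-\mu t} \les \mu^{-k}$ for $\mu \geq \jb{\eta}^2$ produces a bound of order $\eps^{-5} \jb{\eta}^{-4} \leq \eps^{-5} \jb{\eta}^2$.

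The main obstacle is the bound on $|\Phi^{(j)}|$ in the high-frequency regime: the naive estimate $\Phi(|u|) \les e^{\sqrt{|u|}} = e^{\ze_\eps t}$ would overwhelm the damping $e^{-t/(2\eps^2)}$ precisely when $\ze_\eps \gg 1/(2\eps^2)$, i.e., for $\jb{\eta} \gg \eps^{-1}$, and destroy all the uniform bounds. Leveraging the genuine oscillation of $\sin$ through the sinc representation (or the ODE) to extract uniform-in-$u \leq 0$ boundedness of all derivatives of $\Phi$ is therefore the essential input.
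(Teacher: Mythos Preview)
Your proof is correct and follows essentially the same route as the paper: the paper also introduces the entire function $\phi(x) = \sum_{j\ge 0} x^j/(2j+1)!$ (your $\Phi$), writes $\ft{\D_\eps}(\eta,t) = t\,e^{-t/(2\eps^2)}\phi(t^2\ld_\eps(\eta)^2)$, bounds $|\phi^{(p)}(x)| \les \ind_{x\ge 0}\,e^{\sqrt{x}} + \ind_{x<0}$ (obtaining the $x<0$ case ``by induction'' from the sinc representation, which your ODE argument makes explicit), and then differentiates in $\eta$ and in $\eps$ exactly as you do. Your use of Fa\`a di Bruno in place of the paper's substitution $\eta \mapsto t\eps^{-1}\eta$, and your cleaner coefficient inequality $|\Phi^{(j)}(u)| \le 2^{-j}\Phi(|u|)$, are cosmetic variants of the same argument.
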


We refer the reader to \cite[Lemma 2.1.5]{thesis} for a proof of Lemma \ref{LEM:F3}.

In the next lemma, we prove bounds on $\ft \D_\eps(n,t)$ \eqref{L4} (and its time derivative) which are uniform in $\eps >0$. By Plancherel's identity, this will be sufficient to obtain uniform bounds as \eqref{OPunif} (at least for $\eps >0$).

\noi
\begin{lemma}\label{LEM:F1}
Fix $0 < \ta \ll 1$. We have the following bounds:

\noi
\begin{align}
\eps^{- 2} | \ft{\D_\eps} (n,t) | \les \begin{cases}
e^{- \ta t \jb n ^2}   &  \textup{for $\jb n \leq (1+\ta)(2 \eps)^{-1}$} \\
 e^{- \frac{t}{2 \eps^2} } \eps^{-1} \jb{n}^{-1}  & \textup{otherwise
 },
\end{cases}
\label{mul1}
\end{align}

\noi
\begin{align}
| \dt \ft{\D_\eps} (n,t) | \les \begin{cases}
e^{- \ta t \jb n ^2}   &  \textup{for $\jb n \leq (1+\ta)(2 \eps)^{-1}$} \\
 e^{- \frac{t}{2 \eps^2} }   & \textup{otherwise
 },
\end{cases}
\label{mul2}
\end{align}

\noi
with implicit constants independent of $\eps > 0$ and $t > 0$. 
\end{lemma}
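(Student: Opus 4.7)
The plan is to prove both estimates by splitting the Fourier variable $n$ into the low-frequency regime $\jb n \le (1+\ta)(2\eps)^{-1}$ and the high-frequency regime $\jb n > (1+\ta)(2\eps)^{-1}$, and then further subdividing the low regime into a bulk part $\jb n \le (1-\ta)(2\eps)^{-1}$ (where $\jb n\eps$ stays bounded away from $\tfrac12$) and a narrow transition zone $(1-\ta)(2\eps)^{-1} < \jb n \le (1+\ta)(2\eps)^{-1}$ (where $\ld_\eps(n)$ or $\ze_\eps(n)$ may vanish). The three recurring inputs are the explicit formulas \eqref{De1}, the bound $e^{-\frac{t}{2\eps^2}}e^{\ld_\eps(n)t}\le e^{-\jb n^2 t}$ from \eqref{upheat}, and the elementary inequality $(1+y)^k e^{-cy}\les_{k,c}1$ for $y,c\ge 0$.

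In the high-frequency regime, $4\jb n^2\eps^2>(1+\ta)^2$ forces $\ze_\eps(n)\ges_\ta \jb n/\eps$. Using the $\sin$-branch of \eqref{De1} with $|\sin(\ze_\eps(n)t)|\le 1$ gives $\eps^{-2}|\ft{\D_\eps}(n,t)|\les_\ta \eps^{-1}\jb n^{-1}e^{-\frac{t}{2\eps^2}}$, and differentiating yields $\dt\ft{\D_\eps}(n,t)=e^{-\frac{t}{2\eps^2}}\bigl(\cos(\ze_\eps(n)t)-(2\eps^2\ze_\eps(n))^{-1}\sin(\ze_\eps(n)t)\bigr)$, so $|\dt\ft{\D_\eps}(n,t)|\les_\ta e^{-\frac{t}{2\eps^2}}$. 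In the bulk low-frequency part, $\sqrt{1-4\jb n^2\eps^2}\ges_\ta 1$ gives $\eps^2\ld_\eps(n)\ges_\ta 1$, and the $\sinh$-branch of \eqref{De1} together with $|\sinh(y)|\le e^{|y|}$ and \eqref{upheat} directly yields $\eps^{-2}|\ft{\D_\eps}(n,t)|\les_\ta (\eps^2\ld_\eps(n))^{-1}e^{-\jb n^2 t}\les_\ta e^{-\ta\jb n^2 t}$, establishing \eqref{mul1}.

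For \eqref{mul2} in the bulk low-frequency part, a direct bound from the $\cosh/\sinh$ formula would produce an unwanted factor $t/\eps^2$ that cannot be absorbed when $\jb n\ll \eps^{-1}$, so the needed cancellation has to be made explicit. I will do this by working from the root representation $\ft{\D_\eps}(n,t)=(\Ld_\eps^+(n)-\Ld_\eps^-(n))^{-1}(e^{\Ld_\eps^+(n)t}-e^{\Ld_\eps^-(n)t})$ with $\Ld_\eps^\pm(n)$ as in \eqref{ld1}, whose time derivative is $(\Ld_\eps^+-\Ld_\eps^-)^{-1}(\Ld_\eps^+e^{\Ld_\eps^+ t}-\Ld_\eps^- e^{\Ld_\eps^- t})$. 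The routine inequalities $\Ld_\eps^+\le -\jb n^2$, $|\Ld_\eps^+|\le 2\jb n^2$, $\Ld_\eps^-\le -(2\eps^2)^{-1}$, $|\Ld_\eps^-|\le \eps^{-2}$, and $|\Ld_\eps^+-\Ld_\eps^-|\ges_\ta \eps^{-2}$, combined with $\jb n^2\eps^2\le\tfrac14$ and $\jb n^2\le(2\eps)^{-2}$, produce
\[
|\dt\ft{\D_\eps}(n,t)|\les_\ta \jb n^2\eps^2 e^{-\jb n^2 t}+e^{-\frac{t}{2\eps^2}}\les_\ta e^{-\ta\jb n^2 t}.
\]

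The main obstacle is the transition zone, where the bulk argument breaks down because $\ld_\eps(n)$ or $\ze_\eps(n)$ can be arbitrarily small. The saving feature is that $\jb n^2\sim_\ta\eps^{-2}$ throughout this zone, so any factor $\eps^{-2}$ can be replaced by $\jb n^2$ up to a $\ta$-dependent constant. Applying the trivial bounds $|\sinh(y)/y|\le e^{|y|}$, $|\sin(y)/y|\le 1$, $\cosh(y)\le e^{|y|}$, and $|\cos(y)|\le 1$ to whichever branch of \eqref{De1} is active, and using \eqref{upheat} on the $\sinh$ side, yields $|\ft{\D_\eps}(n,t)|\les t\,e^{-\jb n^2 t}$ and $|\dt\ft{\D_\eps}(n,t)|\les (1+\jb n^2 t)\,e^{-c_\ta\jb n^2 t}$ for some $c_\ta>0$. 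The inequality $(1+y)^k e^{-(c_\ta-\ta)y}\les_{k,\ta}1$ with $y=\jb n^2 t$ then absorbs the polynomial prefactors into an extra $e^{-\ta\jb n^2 t}$, provided $\ta$ is chosen small enough that $c_\ta-\ta>0$. This constraint on the size of $\ta$ (needed uniformly across all three sub-regimes) is the reason the lemma is stated under the assumption $0<\ta\ll 1$.
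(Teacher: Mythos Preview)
Your proof is correct and follows essentially the same three-regime decomposition as the paper (bulk low frequencies $\jb n \le (1-\ta)(2\eps)^{-1}$, transition zone, and high frequencies $\jb n > (1+\ta)(2\eps)^{-1}$), using \eqref{upheat} as the key input. Two minor differences are worth noting. For \eqref{mul1} in the transition zone, the paper performs an explicit case split on whether $\ld_\eps(n)t \le 1$ or $>1$ (invoking a sharpened version of \eqref{upheat} in the latter case), whereas your single-line argument via $|\sinh(y)/y|\le e^{|y|}$ followed by $y e^{-cy}\les 1$ is tidier and avoids this split. For \eqref{mul2}, the paper proceeds more uniformly: it writes $\dt\ft{\D_\eps}(n,t)=-\tfrac{1}{2\eps^2}\ft{\D_\eps}(n,t)+e^{-t/(2\eps^2)}\dt\ft{S_\eps}(n,t)$ once for all $n$, applies the already-proved \eqref{mul1} to the first term, and bounds the second via $\cosh$/$\cos$ and \eqref{upheat}. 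This bypasses the root representation $\Ld_\eps^\pm$ you use in the bulk, so it is slightly shorter; on the other hand your root-based bulk argument makes the cancellation you flagged completely explicit. Both routes yield the same bounds with the same dependence on $\ta$.
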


We infer from \eqref{mul1} and \eqref{mul2} that both $\eps^{- 2} \ft{\D_\eps} (n,t)$ and $\dt \ft{\D_\eps} (n,t)$ behave like the heat propagator $P_0$ in the low-frequency regime $\jb n \les \eps^{-1}$. However, in the high-frequency regime $\jb n \gg \eps^{-1}$, they essentially behave like (a scaled version of) the damped propagator $\D_{\eps = 1} (n,t)$ (or its time derivative).

\begin{proof} By the smoothness of the map $(\eps,t, \eta) \mapsto \ft{\D_\eps}(\eta,t)$ discussed in Lemma \ref{LEM:F3} above, it suffices to prove \eqref{mul1} and \eqref{mul2} with $\jb n \neq (2 \eps)^{-1}$, which we assume in the remaining of the proof.

We first prove \eqref{mul1}. Fix $t \ge 0$ and $\eps >0$. By \eqref{ld2} and \eqref{L3}, we have

\noi
\begin{align}
\eps^{- 2} \ft{\D_\eps} (n,t) = \eps^{-2} e^{- \frac{t}{2 \eps^2}}  \frac{\sinh (\ld_\eps(n) t)}{\ld_\eps(n)},
\label{M1}
\end{align}

\noi
in the regime $\jb n < (2 \eps)^{-1}$. Note that by the inequality $\sqrt{1-x} \leq 1 - \frac x2$ for $0 \le x \le 1$, we get 

\noi
\begin{align}
e^{- \frac{t}{2 \eps^2}} e^{\ld_\eps(\eta)t} \le e^{- \jb \eta ^2 t}.
\label{upheat}
\end{align}

\noi
Hence, by \eqref{upheat}, we have that

\noi
\begin{align}
e^{- \frac{t}{2 \eps^2}} \sinh ( t \ld_\eps (n)) \les e^{- \jb n ^2 t},
\label{upsinh}
\end{align}

\noi
for $\jb n \leq (2\eps)^{-1}$. By using the inequality $\ld_\eps(n) \ges_\ta \eps^{-2} $ for $\jb n  \le (1 - \ta) (2\eps)^{-1}$, we then obtain by \eqref{upsinh} and \eqref{M1}

\noi
\begin{align}
\eps^{- 2} | \ft{\D_\eps} (n,t) | \les e^{-t \jb{n}^2}.
\label{M2}
\end{align}

\noi
for $\jb n \le (1 - \ta) (2\eps)^{-1}$.

We now estimate $ \eps^{- 2} \ft{\D_\eps} (n,t)$ for $(1 - \ta) (2\eps)^{-1} < \jb n < (2 \eps)^{-1}$.

\medskip
\noi
$\bullet$
{\bf Case 1:} $\ld_\eps(n)t \le 1$.
\quad
In this regime, by using the bounds $| \!  \sinh(x) | \les |x| $ for $0 \le x \le 1$ and $e^{-y} \les y^{-1}$ for $y > 0$ and \eqref{M1} with $\jb n < (2\eps)^{-1}$, we get

\noi
\begin{align}
\begin{split}
\eps^{- 2} | \ft{\D_\eps} (n,t) | & \les \eps^{-2 } t \cdot  e^{- \frac{t}{2 \eps^2}}  \\
& \les e^{- \frac{t}{4 \eps^2}} \les e^{- t \jb n ^2}.
\end{split}
\label{M3}
\end{align}

\medskip
\noi
$\bullet$
{\bf Case 2:} $\ld_\eps(n)t > 1$.
\quad
In this case, we note that we have $\sqrt{1-x} \le 1 - \ta (1 + \frac x 2)$ for $1- \ta \le x \le 1$ and $0 < \ta \ll 1$ which implies 

\noi
\begin{align}
e^{- (1-\ta) \frac{t}{2 \eps^2}} \sinh ( \ld_\eps (n)t) \les e^{- \ta  t \jb n ^2 },
\label{upsinh2}
\end{align}

\noi
for $ (1- \ta)  (2\eps)^{-1} < \jb n < (2\eps)^{-1}$. Using \eqref{M1}, \eqref{upsinh2} and the inequality $e^{-y} \les y^{-1}$ for $y > 0$, we then get

\noi
\begin{align}
\begin{split}
\eps^{- 2} | \ft{\D_\eps} (n,t) | & \les \eps^{-2 } t \cdot e^{- \ta \frac{t}{2 \eps^2} } \cdot e^{- \ta t \jb{n}^2} \\
& \les e^{- \ta t \jb{n}^2}.
\end{split}
\label{M4}
\end{align}

\noi
Hence, from \eqref{M3} and \eqref{M4} we deduce 

\noi
\begin{align}
\eps^{- 2} | \ft{\D_\eps} (n,t) | \les e^{- \ta t \jb{n}^2}
\label{M5}
\end{align}

\noi
for $ (1- \ta)  (2\eps)^{-1} < \jb n < (2\eps)^{-1}$.

For $\jb n > (2\eps)^{-1}$, we have from \eqref{L3} with \eqref{ld3} and \eqref{L4}:

\noi
\begin{align}
\eps^{- 2}  \ft{\D_\eps} (n,t)  = \eps^{-2} e^{- \frac{t}{2 \eps^2}}  \frac{\sin (\ze_\eps(n) t)}{\ze_\eps(n)}.
\label{M6}
\end{align}

\noi
By using the inequalities $| \! \sin(x) |\les |x|$ for $x \in \R$ and $e^{-y} \les y^{-1}$ for $y > 0$, \eqref{M6} and $\jb n \sim \frac{1}{2 \eps^2}$, we have

\noi
\begin{align}
\begin{split}
\eps^{- 2} | \ft{\D_\eps} (n,t) | & \les \eps^{-2} t \cdot e^{ - \frac{t}{2 \eps^2}} \\
& \les e^{ - \frac{t}{4 \eps^2}} \les e^{- \frac{t}{10} \jb n ^2}.
\end{split}
\label{M7}
\end{align}

\noi
for $ (2 \eps)^{-1} < \jb n \le (1 + \ta) (2 \eps)^{-1} $.

For $\jb n > (1+\ta) \cdot (2 \eps)^{-1}$, we have $\ze_\eps(n) \ges_\ta  \eps^{-1} \jb n$. Thus, we get the following bound from from \eqref{M6},

\noi
\begin{align}
\eps^{- 2} | \ft{\D_\eps} (n,t) | \les  e^{- \frac{t}{2 \eps^2} } \eps^{-1} \jb{n}^{-1}
\label{M8}
\end{align}

\noi
Collecting \eqref{M2}, \eqref{M5}, \eqref{M7} and \eqref{M8} yields \eqref{mul1} 

We now prove \eqref{mul2}. Note that from \eqref{M1} and \eqref{M6}, we have 

\noi
\begin{align}
\dt \ft{\D_\eps}(n,t) = - \frac{1}{2 \eps^2} \ft{\D_\eps}(n,t) + e^{- \frac{t}{2 \eps ^2}} \dt \ft{S_\eps}(n,t)
\label{M9}
\end{align}

\noi
where $\{ \ft{S_\eps}(n,t) \}_{n \in \Z^2}$ is the Fourier symbol associated to $S_\eps(t)$ defined in \eqref{L3}.

By \eqref{mul1}, it suffices to estimate the contribution of $e^{- \frac{t}{2 \eps ^2}} \dt \ft{S_\eps}(n,t) $. By \eqref{L3}, we have

\noi
\begin{align}
\dt \ft{S_\eps}(n,t)  = \cosh( \ld_\eps(n) t) \ind_{\jb n < (2 \eps)^{-1}} + \cos( \ze_\eps(n) t) \ind_{\jb n > (2 \eps)^{-1}}.
\label{M10}
\end{align}

\noi
Hence, by \eqref{upheat}, we get

\noi
\begin{align}
e^{- \frac{t}{2 \eps ^2}} \dt \ft{S_\eps}(n,t) \les \begin{cases} e^{-t \jb n ^2} & \text{ if $\jb n \le (2 \eps)^{-1}$} \\
 e^{-\frac{t}{2\eps^2}} & \text{ otherwise},
\end{cases}
\label{M11}
\end{align}

\noi
which concludes the proof of \eqref{mul2}.	
\end{proof}

In the next lemma, we study the behavior near $\eps = 0$ of the symbols $\ft \D_{\eps}(n,t)$. 

\begin{lemma}\label{LEM:F2}
Fix $0 < \ta \ll1$. The following estimates hold:

\noi
\begin{align}
\big| \eps^{- 2}  \ft{\D_\eps} (n,t) - e^{-t \jb n ^2} \big| \ind_{\jb n \les \eps^{- 1 + \ta}} \les e^{- \frac{t}{2 \eps^2}} + \eps^{2 \ta}  e^{- \frac t 2  \jb n ^2},
\label{mul3}
\end{align}

\noi
and

\noi
\begin{align}
\big| (\eps^{- 2} + \dt) \ft{\D_\eps} (n,t) - e^{-t \jb n ^2} \big| \ind_{\jb n \les \eps^{- 1 + \ta}} \les  \eps^{2 \ta}  e^{- \frac t 2 \jb n ^2},
\label{mul4}
\end{align}

\noi
for any $t \ge 0$.
\end{lemma}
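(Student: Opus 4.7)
The key observation is that in the regime $\jb n \les \eps^{-1+\ta}$, we are safely in the low-frequency regime since $A := 4 \eps^2 \jb n^2 \les \eps^{2\ta} \ll 1$, and we may use the eigenvalue representation
\[
\eps^{-2} \ft{\D_\eps}(n,t) = \frac{e^{\Ld_\eps^+(n) t} - e^{\Ld_\eps^-(n) t}}{\sqrt{1 - A}},
\]
with $\Ld_\eps^{\pm}(n) = -\frac{1}{2\eps^2}\mp\ld_\eps(n)$ as in \eqref{ld1}. A Taylor expansion (cf. \eqref{ld4}, \eqref{taylor1}) yields, in this regime, the three key quantitative facts: (a)~$0 \le -\Ld_\eps^+(n) - \jb n^2 \les \eps^2 \jb n^4$, (b)~$-\Ld_\eps^-(n) \ges \eps^{-2}$ so that $e^{\Ld_\eps^-(n) t} \les e^{-\frac{t}{2\eps^2}}$, and (c)~$\big|\tfrac{1}{\sqrt{1-A}} - 1\big| \les A \les \eps^{2\ta}$.

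For \eqref{mul3}, I would write
\[
\eps^{-2}\ft{\D_\eps}(n,t) - e^{-t\jb n^2} = \tfrac{1}{\sqrt{1-A}}\bigl(e^{\Ld_\eps^+ t} - e^{-t \jb n^2}\bigr) \;-\; \tfrac{1}{\sqrt{1-A}} e^{\Ld_\eps^- t} \;+\; e^{-t\jb n^2}\bigl(\tfrac{1}{\sqrt{1-A}}-1\bigr).
\]
The second term is bounded by $e^{-\frac{t}{2\eps^2}}$ by (b), and the third by $\eps^{2\ta}e^{-t\jb n^2}$ by (c). For the first, I use the inequality $|e^{-x}-e^{-y}| \le e^{-\min(x,y)}\min(1,|x-y|)$ and (a) to get $|e^{\Ld_\eps^+ t}-e^{-t\jb n^2}| \les e^{-t\jb n^2}\min(1,\eps^2 \jb n^4 t)$; splitting into $\jb n^2 t \le 1$ and $\jb n^2 t > 1$, and using the elementary bound $x e^{-x/2} \les 1$ together with $(\eps\jb n)^2 \les \eps^{2\ta}$, one obtains $\les \eps^{2\ta} e^{-\frac t 2 \jb n^2}$.

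For \eqref{mul4}, a short computation gives $\dt \ft{\D_\eps}(n,t) = \tfrac12[(1-a)e^{\Ld_\eps^+ t} + (1+a) e^{\Ld_\eps^- t}]$, where $a = \tfrac{1}{\sqrt{1-A}}$, and hence the crucial cancellation
\[
(\eps^{-2}+\dt)\ft{\D_\eps}(n,t) = \tfrac{1+a}{2}e^{\Ld_\eps^+ t} + \tfrac{1-a}{2}e^{\Ld_\eps^- t}.
\]
The coefficient $\frac{1-a}{2} = O(\eps^{2\ta})$ multiplies the "bad" mode $e^{\Ld_\eps^- t}$, so using (b) this summand is $\les \eps^{2\ta} e^{-\frac{t}{2\eps^2}}$. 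Writing $\tfrac{1+a}{2} e^{\Ld_\eps^+ t} - e^{-t\jb n^2} = \tfrac{1+a}{2}(e^{\Ld_\eps^+ t}-e^{-t\jb n^2}) + (\tfrac{1+a}{2}-1)e^{-t\jb n^2}$ and using (a), (c), and the same argument as above, both pieces are $\les \eps^{2\ta} e^{-\frac t 2 \jb n^2}$. Finally, since $\jb n \les \eps^{-1+\ta}$ implies $\jb n^2 \le \eps^{-2}$ for small enough $\eps$, one has $e^{-\frac{t}{2\eps^2}} \le e^{-\frac t 2 \jb n^2}$, so all contributions are absorbed into $\eps^{2\ta} e^{-\frac t 2 \jb n^2}$.

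The main technical point is the exact identification of the cancellation in the operator $\eps^{-2} + \dt$ applied to $\D_\eps$: at the symbol level this precisely eliminates the leading contribution from the mode $\Ld_\eps^-$ (i.e. the factor $\frac{1}{\sqrt{1-A}}$ drops to $\frac{1-a}{2} = O(\eps^{2\ta})$), which is what upgrades the $e^{-\frac{t}{2\eps^2}}$ term in \eqref{mul3} to a genuine $\eps^{2\ta}$-gain in \eqref{mul4}; the remainder of the proof is then elementary Taylor analysis and the inequality $x e^{-x/2}\les 1$.
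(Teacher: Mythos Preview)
Your proof is correct and takes essentially the same approach as the paper: both arguments decompose $\eps^{-2}\ft{\D_\eps}$ via the eigenvalues $\Ld_\eps^\pm$, bound the $\Ld_\eps^-$ contribution by $e^{-t/(2\eps^2)}$, control $e^{\Ld_\eps^+ t}-e^{-t\jb n^2}$ via the Taylor expansion $\Ld_\eps^+ + \jb n^2 = O(\eps^2\jb n^4)$ together with $xe^{-x/2}\les 1$, and for \eqref{mul4} identify the exact cancellation that reduces the coefficient of $e^{\Ld_\eps^- t}$ to $\tfrac{1-a}{2}=O(\eps^{2\ta})$. The paper merely regroups the $e^{\Ld_\eps^+ t}$ coefficient as $a + \tfrac{1-a}{2}$ (their $\mathcal P_\eps + \mathcal R_\eps$) rather than your $\tfrac{1+a}{2}$, which is algebraically identical. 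One small typo: you wrote $\Ld_\eps^\pm = -\tfrac{1}{2\eps^2}\mp\ld_\eps$, but from \eqref{ld1} the sign is $\pm$; your subsequent use of facts (a) and (b) shows you intended the correct convention.
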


\begin{proof} Fix $t\ge 0$ and $\jb n \les \eps^{- 1 + \ta}$. From \eqref{M1} with \eqref{ld2}, we write

\noi
\begin{align}
\eps^{- 2} \ft{\D_\eps} (n,t) = e^{- \frac{t}{2 \eps^2}}  \frac{ e^{\ld_\eps(n) t} - e^{-\ld_\eps(n) t}  }{ \sqrt{1 - 4 \eps^2 \jb n ^2} } =: \1 - \II.
\label{N1}
\end{align}

\noi
Since $\ld_\eps(n)$ is non-negative, we have 

\noi
\begin{align}
| \II | \les e^{- \frac{t}{2 \eps^2}}.
\label{N2}
\end{align}

\noi
Furthermore, we get, using \eqref{ld1}, the inequality $\Ld^+_{\eps}(n)  + \jb n ^2 \le 0$, the asymptotic expansion of $\Ld^+_\eps(n)$ in \eqref{ld4}, the mean value theorem with the inequality $e^{-y} \les y^{-1}$ for $y>0$, we deduce

\noi
\begin{align}
\begin{split}
| \1 - e^{- t \jb n^2} | & \les | \big( \frac{1}{\sqrt{1 - 4 \eps^2 \jb n ^2} } - 1 \big) e^{\Ld^+_\eps(n) t} | + | \big( e^{\Ld^+_\eps(n) t} - e^{- t \jb n^2} \big)  |, \\
& \les \eps^{ 2\ta} e^{-t \jb n ^2} +  t \jb n ^4 \eps^2  e^{- t \jb n ^2} \les \eps^{2 \ta}   e^{- \frac t2  \jb n ^2}.
\end{split}
\label{N3SK}
\end{align}

\noi
Putting \eqref{N1}, \eqref{N2} and \eqref{N3SK} together gives \eqref{mul3}.

We now prove \eqref{mul4}. By using \eqref{M1}, \eqref{ld1}, \eqref{ld2} and \eqref{M9}, we write

\noi
\begin{align}
(\eps^{-2} + \dt) \ft{\D_\eps}(n,t) = \mathcal{P}_\eps (n,t) + \mathcal{R}_\eps (n,t),
\label{N4SK}
\end{align}

\noi
with
\begin{align*}
& \mathcal{P}_\eps (n,t) := \frac{e^{\Ld_\eps ^+(n)t}}{\sqrt{1 - 4 \jb{n}^2 \eps^2}} \\
& \mathcal{R}_\eps (n,t) := \Big( 1 - \frac{1}{\sqrt{1 - 4 \jb{n}^2 \eps^2}} \Big) \frac{e^{\Ld^+_\eps(n)t}}{2} + \Big(  1 -  \frac{1}{\sqrt{1 - 4 \jb{n}^2 \eps^2}}  \Big) \frac{e^{\Ld^-_\eps(n)t}}{2}.
\end{align*}

\noi
By arguing as in \eqref{M3}, we find

\noi
\begin{align}
\begin{split}
& | \mathcal{P}_\eps (n,t)  - e^{-t \jb n ^2}| \les  \eps^{2 \ta}   e^{- \frac t2  \jb n ^2}, \\
& | \mathcal{R}_\eps (n,t) | \les \eps^{ 2 \ta} e^{-t \jb n ^2}.
\end{split}
\label{N5}
\end{align}

\noi
Thus, \eqref{mul4} follows from \eqref{N4SK} and \eqref{N5}.

\end{proof}


In the next lemma, we deduce bounds in Sobolev spaces for the linear propagator $\D_\eps$ from the corresponding estimates at the level of the Fourier symbol $\ft{\D_\eps} (n,t)$ in Lemma \ref{LEM:F1} and Lemma \ref{LEM:F2}.

\noi
\begin{lemma}\label{COR:OP}
Fix $0 < \ta \ll 1$. Recall the definitions of $\Pll$ and $\Phh$ in \eqref{proj2}. The following inequalities hold:

\smallskip
\noi
\textup{(i)} \textup{(}parabolic smoothing\textup{)} Let $\al, \be \in \R$ with $\al \geq \be$ and $t>0$. We have

\noi
\begin{align*}
\big\| \eps^{-2} \Pll \D_\eps(t) f \big\|_{H_x^\al} \les t^{- \frac{\al-\be}{2}}  \| f\|_{H^\be_x},
\end{align*}

\noi
for any function $f$ and with an implicit constant independent of $\eps >0 $ and $t>0$.

\smallskip
\noi
\textup{(ii)} \textup{(}wave smoothing\textup{)} Let $s \in \R$, $\g \in \{ 0,1 \}$ and $t>0$. We have

\noi
\begin{align*}
\big\| \eps^{-2} \Phh \D_\eps(t) f \big\|_{H^s_x} \les e^{-\frac{t}{10 \eps^2}} t^{- \frac{\g}{2}} \| f\|_{H^{s-\g}_x},
\end{align*}

\noi
for any function $f$ and with an implicit constant independent of $\eps >0$ and $t > 0$.

\smallskip
\noi
\textup{(iii)} Let $s \in \R$, $0 \le \g \le 1$ and $t > 0$. We have

\noi
\begin{align*}
\| \D_\eps(t) f \|_{H^s_x} \les \eps^{2 - \g} \| f \|_{H^{s-\g}_x}
\end{align*}

\noi
for any function $f$ and with an implicit constant independent of $\eps >0$ and $t > 0$.

\smallskip
\noi
\textup{(iv)} Let $s \in \R$. We have,

\noi
\begin{align*}
\sup_{\eps, t >0} \big\|  \dt \D_\eps(t) f \big\|_{H^s_x} \les \|f\|_{H^s_x},
\end{align*}

\noi
for any function $f$.

\smallskip
\noi
\textup{(v)} Let $\al, \be \in \R$ with $\al \geq \be$, and $t >0$. We have: 

\noi
\begin{align*}
& \big\|  \P_{\le \eps^{-1+\ta}} \big( (\eps^{-2} + \dt)  \D_\eps (t) - P_0(t)  \big) f \big\|_{H^\al_x} \les \eps^{2\theta}  t^{- \frac{\al-\be}{2}} \|f\|_{H^\be_x},
\end{align*}

\noi
for any function $f$ and with an implicit constant independent of $\eps >0$ and $0 < t \le T$.
\end{lemma}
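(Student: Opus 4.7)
Each of the five inequalities is a Fourier multiplier bound on $L^2$-based Sobolev spaces, so the plan is to reduce by Plancherel to the pointwise estimates on $\ft{\D_\eps}(n,t)$ or $\dt\ft{\D_\eps}(n,t)$ already contained in Lemmas \ref{LEM:F1} and \ref{LEM:F2}, combined with the elementary Gaussian-decay bound
\begin{align*}
\sup_{n \in \Z^2} \jb n^{2(\al-\be)} e^{-c t \jb n^2} \les_c t^{-(\al-\be)}, \qquad \al \ge \be,\ t > 0,
\end{align*}
which is also the mechanism underlying Lemma~\ref{LEM:heat}.

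For (i), on $\jb n \le (1+\ta)(2\eps)^{-1}$ the first branch of \eqref{mul1} gives $\eps^{-2}|\ft{\D_\eps}(n,t)| \les e^{-\ta t \jb n^2}$, and Plancherel followed by the Gaussian-decay bound yields (i) at once. For (ii), on $\jb n > (1+\ta)(2\eps)^{-1}$ the second branch of \eqref{mul1} produces $\eps^{-2}|\ft{\D_\eps}(n,t)| \les e^{-t/(2\eps^2)}\eps^{-1}\jb n^{-1}$; the factor $\eps^{-1}\jb n^{-1}$ is uniformly bounded on this region, which handles the case $\g=0$, while for $\g=1$ we trade $\eps^{-1}$ for $t^{-1/2}$ at the cost of a weaker exponential, via the elementary inequality $(t/\eps^2)^{1/2} e^{-2t/(5\eps^2)} \les 1$ that upgrades $e^{-t/(2\eps^2)}\eps^{-1}$ to $t^{-1/2} e^{-t/(10\eps^2)}$; the remaining $\jb n^{-1}$ absorbs the one derivative loss.

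For (iii) we split at $\jb n \sim \eps^{-1}$: on $\jb n \le (1+\ta)(2\eps)^{-1}$, \eqref{mul1} gives $|\ft{\D_\eps}(n,t)| \les \eps^2$, which combined with $\eps \les \jb n^{-1}$ yields $|\ft{\D_\eps}(n,t)| \les \eps^{1+\g}\jb n^{-(1-\g)}$; on the complementary region $|\ft{\D_\eps}(n,t)| \les \eps\jb n^{-1}$, and $\jb n^{-\g} \les \eps^\g$ produces the same bound, so Plancherel closes (iii). Estimate (iv) is immediate from the uniform bound $|\dt\ft{\D_\eps}(n,t)| \les 1$ furnished by \eqref{mul2}. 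Finally, for (v), since $\P_{\eps^{-1+\ta}}$ is supported in the regime $\jb n \les \eps^{-1+\ta}$, the bound \eqref{mul4} of Lemma \ref{LEM:F2} applies and gives
\begin{align*}
|(\eps^{-2}+\dt)\ft{\D_\eps}(n,t) - e^{-t\jb n^2}| \les \eps^{2\ta} e^{-t\jb n^2/2},
\end{align*}
after which Plancherel and the Gaussian-decay bound exactly as in (i) conclude the argument.

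Since the heavy lifting has been deferred to Lemmas \ref{LEM:F1} and \ref{LEM:F2}, the only delicate points remaining are the two balancing steps: in (ii) with $\g=1$, converting the prefactor $\eps^{-1}$ into $t^{-1/2}$ while retaining an exponential decay of the form $e^{-t/(10\eps^2)}$; and in (iii), interpolating the weight $\eps^{1+\g}\jb n^{-(1-\g)}$ across the low/high-frequency threshold so that it holds uniformly for every $\g \in [0,1]$.
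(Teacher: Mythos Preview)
Your proposal is correct and follows essentially the same route as the paper: items (i), (iv), (v) are reduced to the pointwise multiplier bounds \eqref{mul1}, \eqref{mul2}, \eqref{mul4} plus the Gaussian-decay estimate behind Lemma~\ref{LEM:heat}, and item (ii) is handled exactly as in the paper via the inequality $e^{-y}\les y^{-1/2}$ to convert $\eps^{-1}e^{-t/(2\eps^2)}$ into $t^{-1/2}e^{-t/(10\eps^2)}$. The only cosmetic difference is in (iii): the paper first establishes the endpoint cases $\g=0$ and $\g=1$ separately and then interpolates, whereas you obtain the bound $|\ft{\D_\eps}(n,t)|\les \eps^{1+\g}\jb n^{-(1-\g)}$ directly for every $\g\in[0,1]$ at the multiplier level; both arguments are equivalent.
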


\begin{proof}
Items (i), (iv) and (v) are direct consequences of \eqref{mul1}, \eqref{mul2} and \eqref{mul4}, respectively (along with Lemma \ref{LEM:heat}). We now look at (ii). If $\g=0$, then (ii) comes directly from \eqref{mul1}. If $\g = 1$, then from \eqref{mul1}, we have

\noi
\begin{align*}
 \big\| \eps^{-2} \Phh \D_\eps(t) f \big\|_{H^\s_x} & \les e^{-\frac{t}{2\eps^2}} \eps^{-1} \| f\|_{H^{s-1}_x} \\
 & \les e^{-\frac{t}{10 \eps^2}} t^{- \frac{1}{2}} \| f\|_{H^{s-1}_x},
\end{align*}

\noi
where we used the inequality $e^{-y} \les y^{-\frac12}$ for $y >0$. This proves (ii) for $\g = 1$. From (i) and (ii) with $\g = 0$, and by interpolation, (iii) follows from the bound

\noi
\begin{align}
\| \D_\eps(t) f \|_{H^s_x} \les \eps \| f \|_{H^{s-1} _x},
\label{lin10}
\end{align}

\noi
which we now prove. From (i), \eqref{proj2} and the restriction $\jb n \les \eps ^{-1}$, we get

\noi
\begin{align}
\| \Pll \D_\eps (t) f \|_{H^s_x} & \les \eps^2 \| \Pll f \|_{H^s_x} \notag \\
& \les \eps \| f \|_{H^{s-1}_x}. \label{lin11}
\end{align}

\noi
Furthermore, from \eqref{mul1} with \eqref{proj2}, we have

\noi
\begin{align}
\| \Phh \D_\eps (t) f \|_{H^s_x} & \les \eps^2 e^{-\frac{t}{2 \eps^2} } \eps^{-1} \| f \|_{H^{s-1}_x} \notag \\
& \les \eps \| f \|_{H^{s-1}_x}. \label{lin12}
\end{align}

\noi
Hence, combining \eqref{lin11} and \eqref{lin12} gives \eqref{lin10}.
\end{proof}

\noi
\begin{remark}\label{RMK:mulstr} \rm
From \eqref{mul2}, we actually get the following stronger bounds for $\al , \be, s \in \R$ with $\al \ge \be$:

\noi
\begin{align*}
& \| \Pll \dt \D_\eps(t) f \|_{H_x^{\al}} \les t^{-\frac{\al - \be}{2}} \|f\|_{H^\be _x}, \\
& \| \Phh \dt \D_\eps(t) f \|_{H_x^{s}} \les e^{- \frac{t}{2 \eps^2}} \| f \|_{H^s_x},
\end{align*}

\noi
for any $\eps, t > 0$ and function $f$.
\end{remark}

We are now ready to prove Proposition \ref{PROP:det}.
\noi
\begin{proof}[Proof of Proposition \ref{PROP:det}]
Let $s \in \R$ and $T> 0$ and fix smooth functions $\phi_0$ and $\phi_1$. We first prove \eqref{OPunif}. By Lemma \ref{LEM:heat}, $P_0$ \eqref{Lheat} cleary satisfies the bound

\noi
\begin{align}
\| P_0 (\phi_0, \phi_1) \|_{ L^{\infty}_T H^s_x} \les \| (\phi_0,\phi_1)   \|_{\H ^s_x}.
\label{pf1}
\end{align}

\noi
Hence, it suffices to prove

\noi
\begin{align}
\sup_{\eps \in (0,1]} \|  P_{\eps} (\phi_0, \phi_1) \|_{L^{\infty}_T H^s_x} \les  \|  (\phi_0, \phi_1) \|_{\H^s_x}.
\label{pf2}
\end{align}

\noi
By \eqref{O1a}, Lemma \ref{COR:OP} (i), (ii), (iii) and (iv) with \eqref{proj2}, we have

\noi
\begin{align}
\begin{split}
& \sup_{\eps \in (0,1]} \| ( \eps ^{-2 } + \dt) \D_\eps (t) \phi_0  \|_{L^{\infty}_T H^s_x} \les \|\phi_0\|_{H^s_x} \\
& \sup_{\eps \in (0,1]}  \| \D_\eps (t) \phi_1  \|_{L^{\infty}_T H^s_x} \les \eps \|\phi_1 \|_{H^{s-1}_x}
\end{split}
\label{pf3}
\end{align}

\noi
The continuity in time of $P_\eps(\phi_0,\phi_1)$ for some fixed $\eps >0 $ and $(\phi_0,\phi_1) \in \H^s(\T^2)$ follows from the dominated convergence theorem and \eqref{pf3}. Combining \eqref{pf3} and \eqref{O1a} gives \eqref{pf2}. This concludes the proof of the first part of \eqref{OPunif}.

By \eqref{OP1}, Minkowski's inequality and Lemma \ref{COR:OP} (i) and (ii), we obtain the following estimate for $\eps >0$:

\noi
\begin{align}
\begin{split}
\| \I_\eps(F) \|_{L^{\infty}_T H^s_x} & \les \int_{0}^T \| \eps^{-2} \D_\eps(t) F(t) \|_{H^s_x} dt \\
& \les \int_{0}^T t^{- \frac12} dt \| F \| _{L^{\infty}_T H^{s-1}_x} \les T^{\frac12} \| F \| _{L^{\infty}_T H^{s-1}_x}.
\end{split}
 \label{pf5}
\end{align}

\noi
The same inequality holds for $\eps=0$. The continuity in time of $\I_\eps (F)$ ($\eps \in [0,1]$) follows from a similar computation. This finishes the proof of \eqref{OPunif}.

The estimate on $P_\eps -P_0$ in \eqref{detconv} follows from Lemma \ref{COR:OP} (v), Lemma \ref{LEM:heat} and similar arguments along with the bound $\| \F^{-1}( \ft{f}  \ind_{\jb n > \eps^{-1 + \ta}} ) \|_{H^s_x} \les \eps^{\frac \ta 2} \|f\|_{H^{s+\ta}_x}$ for any $\ta > 0$ small enough. Similarly, the estimate on $\I_\eps - \I_0$ in \eqref{detconv} follows from \eqref{mul3} in Lemma \ref{LEM:F2}.
\end{proof}

Lastly, we prove Corollary \ref{COR:det}.

\noi
\begin{proof}[Proof of Corollary \ref{COR:det}] The continuity of the map $\eps \in (0,1] \mapsto P_\eps $ is deduced from the smoothness of the map $(\eps,t) \mapsto \ft{\D_\eps}(n,t)$ for each $n \in \Z^2$ (by Lemma \ref{LEM:F3}) and the dominated convergence theorem. The continuity at $\eps =0$ of $\eps \mapsto P_\eps $ follows from \eqref{detconv}. Hence, \eqref{C11} follows from the above and Proposition \ref{PROP:det}. The bound \eqref{C12} follows from similar considerations.
\end{proof}

\section{Polynomial models}\label{SEC2}

\subsection{On the stochastic convolution}

In this subsection, we construct the Wick powers \eqref{Herm1}.

\noi
\begin{proposition}\label{PROP:sto} 
Let $\l \in \N$. Fix any finite $p,q \ge 1$, $T> 0$ and $\s > 0$. Then, the following holds:

\smallskip
\noi
\textup{(i)} Let $\eps \in [0,1]$. The sequence $\{ : \!\Psi^\l_{\eps,N} \! : \}_{N \in \N}$ defined in \eqref{Herm1} is a Cauchy sequence in $L^p \big(  \O; L^q ([0,T]; W^{- \s, \infty} (\T^2) ) \big)$ and thus converges, as $N \to \infty$, to a limiting stochastic process in $L^p \big(  \O; L^q ([0,T]; W^{- \s, \infty} (\T^2) ) \big)$, denoted by $:\!\Psi^\l_{\eps} \!:$.

\smallskip
\noi
\textup{(ii)} The sequence $\{( \eps, t) \mapsto  : \!\Psi^\l_{\eps,N} \! : \}_{N \in \N}$ also converges to the process $(\eps, t) \mapsto : \!\Psi^\l_{\eps} \! :$ in $L^p \big(  \O; L^q ( [0,1] \times [0,T]; W^{- \s, \infty} (\T^2) ) \big)$ and almost surely in $C \big( [0,1] \times [0,T]; W^{- \s, \infty} (\T^2) \big)$ as $N \to \infty$.
\end{proposition}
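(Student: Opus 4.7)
The plan is to argue by the standard Da~Prato--Debussche scheme: establish uniform second-moment bounds on the Wick powers in a negative Sobolev space, upgrade them to $L^p(\Omega)$ via the Wiener chaos estimate (Lemma~\ref{LEM:hyp}), pass to continuity via a (bi-parameter) Kolmogorov continuity criterion, and show a Cauchy property in $N$. The only new technical ingredient relative to the classical heat-case analysis (\cite{DPD}) is that the underlying covariance depends explicitly on the parameter $\eps \in [0,1]$ through the kernel $\widehat{\D_\eps}(n,t)$ from Lemma~\ref{LEM:F3}, so all estimates must be carried out with $\eps$-uniform constants and with Hölder control in $\eps$.

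\textbf{Step 1 (Covariance computation).} Using \eqref{sto1}--\eqref{Lheat} and Itô isometry, for $\eps,\eps'\in[0,1]$, $t,t'\geq 0$ and $x,y\in\T^2$,
\begin{align*}
\E\bigl[\Psi_{\eps,N}(t,x)\Psi_{\eps',N}(t',y)\bigr]
=\sum_{\substack{n\in\Z^2\\ \jb{n}\le N}}
\int_0^{\min(t,t')}\!\!\eps^{-2}\widehat{\D_\eps}(n,t{-}s)\,(\eps')^{-2}\widehat{\D_{\eps'}}(n,t'{-}s)\,ds\;e_n(x{-}y).
\end{align*}
By Lemmas~\ref{LEM:F1}--\ref{LEM:F2} and Corollary~\ref{COR:OP}, the kernel $\eps^{-2}\widehat{\D_\eps}(n,\tau)$ is uniformly in $\eps\in[0,1]$ dominated by $e^{-\theta\tau\jb{n}^2}+e^{-\tau/(2\eps^2)}\eps^{-1}\jb{n}^{-1}\mathbf 1_{\jb{n}>(1+\theta)(2\eps)^{-1}}$, which, after time integration, produces a contribution $\lesssim \jb{n}^{-2}$ uniformly in $\eps$. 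Hence the diagonal variance satisfies $\sigma_{\eps,N}(t)\sim\log N$ uniformly, reproducing \eqref{sig}, and for the increments we obtain quantitative bounds
\begin{align*}
&\bigl|\E[\Psi_{\eps,N}(t,x)\Psi_{\eps,N}(t,y)]-\E[\Psi_{\eps',M}(t',x)\Psi_{\eps',M}(t',y)]\bigr|\\
&\qquad\lesssim\bigl(|\eps-\eps'|^{\alpha}+|t-t'|^{\alpha}+N^{-\alpha}\wedge M^{-\alpha}\bigr)\,G(x-y),
\end{align*}
for some $\alpha>0$ small, where $G(x-y)\approx \log\frac{1}{|x-y|}$ is the (logarithmic) Green's function in two dimensions. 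The key inputs for the $\eps$-continuity are the derivative bound \eqref{B1} of Lemma~\ref{LEM:F3} and the convergence estimate \eqref{mul3}--\eqref{mul4}, exactly in the spirit of the proofs of Lemmas~\ref{LEM:F1}--\ref{LEM:F2}.

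\textbf{Step 2 (Second moment of the Wick power).} By Lemma~\ref{LEM:Wick},
\begin{align*}
\E\bigl[\,{:}\Psi^\ell_{\eps,N}(t,x){:}\;{:}\Psi^\ell_{\eps',M}(t',y){:}\,\bigr]
=\ell!\,\bigl\{\E[\Psi_{\eps,N}(t,x)\Psi_{\eps',M}(t',y)]\bigr\}^\ell,
\end{align*}
so the difference of two such Wick products, after polarizing and using Step~1, satisfies
\begin{align*}
\E\bigl[\bigl|{:}\Psi^\ell_{\eps,N}(t,x){:}-{:}\Psi^\ell_{\eps',M}(t',x){:}\bigr|^2\bigr]
\lesssim |\eps-\eps'|^{\alpha}+|t-t'|^{\alpha/2}+(N\wedge M)^{-\alpha}
\end{align*}
uniformly in $x\in\T^2$, and likewise the regularized field $\P_{\tilde N}\bigl({:}\Psi^\ell_{\eps,N}{:}-{:}\Psi^\ell_{\eps,\infty}{:}\bigr)$ has $H^{-\sigma}$-norm controlled by $(N\wedge\tilde N)^{-\alpha}$ for $\sigma>0$; here I use $\sum_{n\in\Z^2}\jb{n}^{-2\sigma-2\ell\cdot 0}<\infty$ after expanding the Wick square in Fourier variables.

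\textbf{Step 3 (Promotion to $L^p(\Omega;W^{-\sigma,\infty})$ and joint continuity).} Each ${:}\Psi^\ell_{\eps,N}(t,x){:}$ lives in the inhomogeneous chaos $\H_{\leq \ell}$, so Lemma~\ref{LEM:hyp} (Wiener chaos/hypercontractivity) upgrades the $L^2(\Omega)$-bounds of Step~2 to $L^p(\Omega)$-bounds with the same rate, for any $p<\infty$. Choosing $p$ sufficiently large and using the Besov/Sobolev embedding $W^{\sigma',p}\hookrightarrow W^{-\sigma,\infty}$ for $\sigma'<\sigma$ and $p\gg 1/\sigma$, one obtains
\begin{align*}
\bigl\|\,{:}\Psi^\ell_{\eps,N}{:}(t)-{:}\Psi^\ell_{\eps',M}{:}(t')\bigr\|_{L^p(\Omega;W^{-\sigma,\infty}(\T^2))}
\lesssim |\eps-\eps'|^{\alpha'}+|t-t'|^{\alpha'}+(N\wedge M)^{-\alpha'}
\end{align*}
for some $\alpha'>0$. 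The $L^p(\Omega;L^q_T W^{-\sigma,\infty}_x)$-Cauchy property then follows from Minkowski, proving the first assertion. For the second assertion, the same estimate serves as the hypothesis of the bi-parameter Kolmogorov continuity criterion \cite[Theorem~2.1]{Baldi} applied to the parameter $(\eps,t)\in[0,1]\times[0,T]$, yielding a jointly Hölder-continuous modification, and a Borel--Cantelli argument along a dyadic subsequence $N=2^j$ gives the almost-sure convergence in $C([0,1]\times[0,T];W^{-\sigma,\infty}(\T^2))$.

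\textbf{Main obstacle.} The genuinely new difficulty is Step~1: producing the Hölder exponent $\alpha>0$ in $\eps$ that is uniform down to $\eps=0$. At $\eps=0$ the wave kernel collapses onto the heat kernel (cf.~\eqref{L7}), but the low-frequency and high-frequency regimes from \eqref{proj2} behave very differently, and the derivative bound~\eqref{B1} in Lemma~\ref{LEM:F3} blows up like $\eps^{-5}\jb{n}^2$. The way out is to not differentiate in $\eps$ at high frequencies but instead use the pointwise smallness provided by \eqref{mul1} together with interpolation in $\eps$ via \eqref{mul3}: on $\jb{n}\leq \eps^{-1+\theta}$ one uses \eqref{mul4} to compare $\eps^{-2}\widehat{\D_\eps}$ directly to the heat symbol $e^{-t\jb{n}^2}$, losing $\eps^{2\theta}$, while on $\jb{n}>\eps^{-1+\theta}$ the contribution to the covariance is already $o(1)$ uniformly. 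Carrying this dichotomy through the $\ell$-fold product in Step~2 is the core computation, and once it is in hand the rest is standard machinery.
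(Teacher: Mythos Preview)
Your approach is essentially the paper's: the same Wick--hypercontractivity--Kolmogorov pipeline, and you correctly isolate the crux of the $\eps$-uniformity argument (interpolate the crude derivative bound~\eqref{B1} against the comparison-to-heat estimate~\eqref{mul3} on low frequencies $\jb{n}\le\eps^{-1+\theta}$, and use the decay from~\eqref{mul1} on high frequencies---the paper carries this out exactly as you sketch, via the three bounds \eqref{S14}--\eqref{S16} and a case split on $h_1\eps^{-7}\gtrless h_1^\gamma$). One slip worth fixing: the displayed bound in your Step~2 on the \emph{pointwise} second moment $\E\bigl[|{:}\Psi^\ell_{\eps,N}(t,x){:}-{:}\Psi^\ell_{\eps',M}(t',x){:}|^2\bigr]$ is false as written---for $M\gg N$ at fixed $\eps,t$ this diverges like $(\log M)^\ell$---and the correct object (which the paper bounds in \eqref{S1} and \eqref{S6}, and which you in fact use in Step~3) is $\E\bigl[|\jb{\nabla}^{-\sigma}(\cdots)(x)|^2\bigr]$; the smoothing is not cosmetic but precisely what makes the frequency sum converge.
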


In \cite{GKO1} and \cite{DPD}, the processes $\{ : \!\Psi^\l_{\eps,N} \! : \}_{N \in \N}$ were constructed for $\eps = 1$ and $\eps = 0$, respectively. The main novelty in Proposition \ref{PROP:sto} lies in (ii), where the stochastic process $(\eps, t) \mapsto : \!\Psi^\l_{\eps} \! :$ is constructed as a continuous function of both $\eps$ and $t$ by using the bi-parameter Kolmogorov continuity criterion (Lemma \ref{LEM:kol}). This implies in particular the convergence of $:  \! \Psi_{\eps}^{\ell} \!  :$ to $:  \! \Psi_{0}^{\ell} \!  :$ along the continuous parameter $\eps \to 0$.

\noi
\begin{proof}
Fix $\l \in \N$, and $\s >0$. Our first goal is to bound the variance:

\noi
\begin{align}
\E \big[  \big( \jb{\nb}^{- \s} : \! \Psi_{\eps,N}^{\ell} (t,\cdot) \! : (x) \big)^2  \big] \les 1,
\label{S1} 
\end{align}

\noi
uniformly in $N \in \N$, $t \ge 0$, $\eps \in [0,1]$, and $x \in \T^2$. 

Fix $N \in \N$, $t\ge 0$, $(x,y) \in (\T^2) ^2$ and $\eps \in (0,1]$ (the case $\eps =0$ in \eqref{S1} follows from similar arguments). By \eqref{convo10}, \eqref{Herm1} and Lemma \ref{LEM:Wick}, we have,

\noi
\begin{align*}
\frac{1}{\ell!} \E \big[  : \! \Psi_{\eps,N}^{\ell} (t,x) \! : \ : \! \Psi_{\eps,N}^{\ell} (t,y) \! :     \big] = \E \big[  \Psi_{\eps,N}(t,x) \Psi_{\eps,N}(t,y)  \big]^\l
\end{align*}

\noi
Applying the Bessel potentials $\jb{\nb_x}^{-\s}$ and $\jb{\nb_y}^{-\s}$ and then setting $x=y$, we see from the previous computation that in order to bound the left-hand-side of \eqref{S1}, we need to bound terms of the form

\noi
\begin{align}
\sum_{ \substack{ n_1, \cdots, n_{\ell} \in \Z^2 \\  \jb {n_j} \les N}} \jb{n_1 + \cdots + n_{\ell}}^{-2 \s} F_1(n_1,t) \cdots F_{\ell}(n_\ell,t),
\label{S2}
\end{align}

\noi
where we write for $1 \le j \le \l$ and $n \in \Z^2$,

\noi
\begin{align*}
& F_j(n,t)  = \E \big[  ( \F ( \Psi_{\eps,N})(n,t)  )^2 \big].
\end{align*}

\noi
Hence, by \eqref{convo10} and \eqref{mul1}, we get

\noi
\begin{align}
\begin{split}
F_j(n,t) & \les \| \ind_{[0,t]}(t')  \eps^{- 2}  \ft{\D_\eps} (n,t)  \|_{L^2_{t'}}^2 \\
& \les \jb{n}^{-2}
\end{split}
\label{S3}
\end{align}

\noi
for $n \in \Z^2$. This gives

\noi
\begin{align*}
\eqref{S2} \les \sum_{ \substack{ n_1, \cdots, n_{\ell} \in \Z^2 \\ \jb{n_j} \les N}} \jb{n_1 + \cdots + n_{\ell}}^{-2 \s}  \prod_{j = 1}^{\l}  \jb{n_j}^{-2} \les 1,
\end{align*}

\noi
and shows \eqref{S1}. 

Let $r > \frac{4}{\s}$ and finite $p, q \ge 1$ with $p \ge q,r$. By Sobolev's and Minkowski's inequalities and Lemma \ref{LEM:hyp} along with \eqref{S1}, we have

\noi
\begin{align}
\begin{split}
\| : \! \Psi_{\eps,N}^{\ell} \! :  \|_{L^p(\O) L^q_T W^{-\s, \infty}_x } & \les \| : \! \Psi_{\eps,N}^{\ell}  \! :  \|_{L^p(\O) L^q_T W^{-\frac{\s}{2}, r}_x } \\
& \leq \big\| \| \jb{\nb}^{- \frac{\s}{2}} : \! \Psi_{\eps,N}^{\ell}  \! :  \|_{L^p(\O)} \big\|_{ L^q_T L^r_x } \\
& \les p^{\frac{\l}{2}} \big\| \| \jb{\nb}^{- \frac{\s}{2}} : \! \Psi_{\eps,N}^{\ell}  \! :  \|_{L^2(\O)} \big\|_{ L^q_T L^r_x } \\
& \les T^{\frac{1}{q}} p^{\frac{\l}{2}} \les_{T,p,\l} 1.
\end{split}
\label{S4}
\end{align}

\noi
Using the inclusion $L^{p_2}(\O) \subset L^{p_1}(\O)$ for $p_1 \le p_2$, we obtain a similar bound for any finite $p \ge 1$. Let $p \ge 1$ be finite and $M \ge N$. By similar arguments, we also get

\noi
\begin{align}
\big\| : \! \Psi_{\eps,N}^{\ell} \! : - : \! \Psi_{\eps,M}^{\ell} \! :  \big\|_{L^p(\O) L^q_T W^{-\s, \infty}_x } \les N^{- \g},
\label{S5}
\end{align}

\noi
for some small $\g >0$. The bound \eqref{S5} shows that $\{ : \! \Psi_{\eps,N}^{\ell} \! : \}_{N \ge 1}$ is a Cauchy sequence in $L^p \big( \O; L^q \big([0,T];  W^{-\s, \infty}(\T^2) \big) \big) $, for any finite $p,q \ge 1$ and $\s >0$. Thus, it converges to some limit denoted by $: \! \Psi_{\eps}^{\ell} \! :$. This shows the first part of the statement, i.e. item (i).

Before proceeding with the proof of (ii), we note that by arguing as in \eqref{S4} and \eqref{S5}, we can construct a process $(\eps, t) \mapsto : \! \Psi ^\l \! : (\eps,t) $ as the limit of the sequence of stochastic objects $\{ (\eps,t) \mapsto  : \! \Psi_{\eps,N}^{\ell} \!:\}_{N \ge 1}$ in ~$L^p \big( \O; L^2 \big( [0,1] \times [0,T];  W^{-\s, \infty}(\T^2) \big) \big) $. Furthermore, this construction is coherent with that of $\{: \! \Psi_{\eps}^{\ell} \! :\}_{\eps \in [0,1]}$ in the sense that $:\! \Psi ^\l\!: (\eps,t)  = \, \, \, :\! \Psi_{\eps}^{\ell} \!:(t)$ in $L^p \big( \O; L^2 \big( [0,1] \times [0,T];  W^{-\s, \infty}(\T^2) \big) \big) $. This can indeed be observed by using the dominated convergence theorem and the uniformity of the bound \eqref{S5} in $\eps \in [0,1]$. This ensures that the process that we are going to construct below indeed corresponds to $\{: \! \Psi_{\eps}^{\ell} \! :\}_{\eps \in [0,1]}$. Furthermore, by arguing as in (the proof of) \cite[Proposition 3.2]{OPTz}, one can prove by using the Borel-Cantelli lemma that the convergence of $\{ (\eps,t) \mapsto  : \! \Psi_{\eps,N}^{\ell} \!:\}_{N \ge 1}$ to $(\eps, t) \mapsto : \! \Psi ^\l \! : (\eps,t) $ holds in $L^2 \big( [0,1] \times [0,T];  W^{-\s, \infty}(\T^2) \big) \big) $, almost surely.

We now prove (ii) and investigate the continuity in $(\eps,t)$ of our stochastic objects. Let $h_1, h_2 \in \R$. We define the operators $\dl_{h_1, h_2} $, $\dl_{h_1}^1$ and $\dl_{h_2}^2$ by 

\noi
\begin{align}
\begin{split}
\dl_{h_1,h_2}X (\eps, t) & = X(\eps + h_1, t+h_2) - X(\eps,t) \\
\dl_{h_1}^1 X(\eps,t) & = X(\eps+h_1,t) - X(\eps,t) \\ \dl_{h_2}^2 X(\eps,t) & = X(\eps,t+h_2) - X(\eps,t),
\end{split}
\label{diff}
\end{align}

Fix $\eps \in [0,1]$ and $t \in [0,T]$. Let $h_1, h_2 \in \R$ such that $\eps + h_1 \in [0,1]$ and $t + h_2 \ge 0$. Let $\l \in \N$, $\s >0$. We aim to show the bound

\noi
\begin{align}
\E \big[  \big( \jb{\nb}^{- \s} \dl_{h_1,h_2} : \! \Psi_{\eps,N}^{\ell} (t,) \! : (x) \big)^2  \big] \les \|(h_1,h_2)\|_2^{\g},
\label{S6}
\end{align}

\noi
for some $\g > 0$ and uniformly in all parameters. In \eqref{S6}, $\| \cdot \|_2$ denotes the Euclidean norm on $\R^2$. 

Let $(x,y) \in (\T^2)^2$. We only treat the case $(\eps,\eps + h_1) \in (0,1]^2$ as the case $\eps = 0$ or $\eps + h_1=0$ follows from similar considerations. Expanding the expression 

\noi
\begin{align}
\frac{1}{\ell!} \E \big[  \dl_{h_1,h_2} : \! \Psi_{\eps,N}^{\ell} (t,x) \! : \dl_{h_1,h_2} : \! \Psi_{\eps,N}^{\ell} (t,y) \! :     \big]
\label{S7}
\end{align}

\noi
yields 

\noi
\begin{align}
\begin{split}
\eqref{S7} & = \Big( \E \big[ \Psi_{\eps+h_1,N} (t+h_2,x) \Psi_{\eps+h_1,N} (t+h_2,y)  \big]^\l \\
& \qquad -  \E \big[ \Psi_{\eps,N} (t,x) \Psi_{\eps+h_1,N} (t+h_2,y)  \big]^\l \Big)  \\
& \quad + \Big( \E \big[ \Psi_{\eps,N} (t,x) \Psi_{\eps,N} (t,y)  \big]^\l \\
& \qquad - \E \big[ \Psi_{\eps+h_1,N} (t+h_2,x) \Psi_{\eps,N} (t,y)  \big]^\l \Big) \\
& =: \1 + \II.
\end{split}
\label{S8}
\end{align}

\noi
We have 

\noi
\begin{align}
\begin{split}
\1 & = \E\big[ \dl_{h_1,h_2} \Psi_{\eps,N} (t,x) \Psi_{\eps+h_1,N} (t+h_2,y)  \big] \\
& \qquad \qquad \times \sum_{j = 0}^{\l - 1} \Big( \E \big[ \Psi_{\eps+h_1,N} (t+h_2,x) \Psi_{\eps+h_1,N} (t+h_2,y)  \big]^j \\
& \qquad \qquad \qquad \E \big[ \Psi_{\eps,N} (t,x) \Psi_{\eps+h_1,N} (t+h_2,y)  \big]^{\l-1 -j} \Big).
\end{split}
\label{S9}
\end{align}

\noi
A similar expression holds for $\II$. Thus, by reasoning as before, in order to estimate $\E \big[  \big( \jb{\nb}^{- \s} \dl_{h_1,h_2} : \! \Psi_{\eps,N}^{\ell} (t,) \! : (x) \big)^2  \big]$ we are led to bound sums of the form 

\noi
\begin{align}
\sum_{ \substack{ n_1, \cdots, n_{\ell} \in \Z^2 \\  \jb {n_j} \les N}} \jb{n_1 + \cdots + n_{\ell}}^{-2 \s} G_1(n_1) \cdots G_{\ell}(n_\ell),
\label{S10}
\end{align}

\noi
with $G_j = G_j(n_j,t,\eps, h_1,h_2)$ ($1 \le j \le \l$). We have 

\noi
\begin{align}
\begin{split}
& G_1(n,t,\eps, h_1,h_2) = \E \Big[ \F( \dl_{h_1,h_2} \Psi_{\eps,N}(n,t) ) \F( \Psi_{\eps^{(1)},N}(n,t^{(1)}) )  \Big] \\
& G_j(n,t,\eps, h_1,h_2) = \E \Big[ \F( \Psi_{\eps_1^{(j)},N}(n,t_1^{(j)}) ) \F( \Psi_{\eps_2^{(j)},N}(n,t_2^{(j)}) )  \Big], \quad 2 \le j \le \l,
\end{split}
\label{S11}
\end{align}

\noi
where $(\eps^{(1)}, \eps_1^{(j)}, \eps_2^{(j)} ) \in \{ \eps, \eps + h_1 \}^3$ and $( t^{(1)}, t_1^{(j)}, t_2^{(j)} ) \in \{t, t+ h_2 \}^3$ for $2 \le j \le \l$. As before, we have

\noi
\begin{align}
G_j(n,t,\eps, h_1,h_2) \les \jb n ^{-2},
\label{S12}
\end{align}

\noi
for $2 \le j \le \l$, uniformly in all parameters. Denoting by $\langle \cdot, \cdot \rangle_{L^2} $ the canonical inner product on $L^2(\R)$, we have

\noi
\begin{align}
\begin{split}
& G_1(n,t,\eps, h_1,h_2) \\
& \qquad = \big\langle  \dl_{h_1,h_2} (\eps^{-2} \ft{\D_{\eps}}(n,t-t')), \ind_{[0,\min (t,  t^{(1)})]}(t') (\eps^{(1)})^{-2} \ft{\D_{\eps^{(1)}}}(n,t^{(1)}-t') \big\rangle_{L^2_{t'}}   \\
& \qquad = \big\langle  \dl^1_{h_1} (\eps^{-2} \ft{\D_{\eps}}(n,t+h_2-t')), \ind_{[0,\min (t,  t^{(1)})]}(t') (\eps^{(1)})^{-2} \ft{\D_{\eps^{(1)}}}(n,t^{(1)}-t') \big\rangle_{L^2_{t'}}   \\
& \qquad \quad + \big\langle \eps^{-2} \dl^2_{h_2}  \ft{\D_{\eps}}(n,t-t'), \ind_{[0,\min (t,  t^{(1)})]}(t')(\eps^{(1)})^{-2} \ft{\D_{\eps^{(1)}}}(n,t^{(1)}-t') \big\rangle_{L^2_{t'}}  \\
& \qquad = \III + \IV.
\end{split}
\label{S13}
\end{align}

We now estimate the terms $\III$ and $\IV$. By \eqref{mul1} we have

\noi
\begin{align}
|\III|, |\IV| \les \jb n ^{-2}.
\label{S14}
\end{align}

\noi
Let us assume that $h_1 \ge 0$ for convenience. By \eqref{mul1}, \eqref{B1} and the mean value theorem, we also have the following crude bound:

\noi
\begin{align}
|\III| & \les \big\langle  \dl^1_{h_1} (\eps^{-2}) \, \ft{\D_{\eps}}(n,t+h_2-t'), \ind_{[0,\min (t,  t^{(1)})]}(t') (\eps^{(1)})^{-2} \ft{\D_{\eps^{(1)}}}(n,t^{(1)}-t') \big\rangle_{L^2_{t'}} \notag \\
& \qquad + \big\langle   \eps^{-2} \, \dl^1_{h_1} \big( \ft{\D_{\eps}}(n,t+h_2-t') \big), \ind_{[0,\min (t,  t^{(1)})]}(t') (\eps^{(1)})^{-2} \ft{\D_{\eps^{(1)}}}(n,t^{(1)}-t') \big\rangle_{L^2_{t'}} \notag \\
&  \les h_1 \eps^{-3} \, \|  \ft{\D_{\eps}}(n,t')  \|_{L^{2} _{t'}} \cdot \big\| (\eps^{(1)})^{-2}  \ft{\D_{\eps ^{(1)}}}(n,t')  \big\|_{L^{2} _{t'}} \notag \\
& \qquad \quad + \eps^{-2} h_1 \|  \sup_{\eps_0 \in (\eps, \eps + h_1)} \partial_\eps \ft{\D_{\eps_0}}(n,t')  \|_{L^{\infty} _{t'}} \cdot \big\| (\eps^{(1)})^{-2}  \ft{\D_{\eps ^{(1)}}}(n,t')  \big\|_{L^{1} _{t'}} \notag \\
& \les h_1 \eps^{-7} \jb n ^2. \label{S15}
\end{align}

\noi
The bound \eqref{S15} blows up when $\eps >0$ is much smaller than other parameters and we now obtain a bound which is acceptable for small values of $\eps$. We note that 

\noi
\begin{align*}
\dl^1_{h_1} (\eps^{-2} \ft{\D_{\eps}}(n,t')) = \big( (\eps+h_1)^{-2} \ft{\D_{\eps+ h_1}}(n,t') - e^{-t' \jb n ^2} \big) + \big( e^{- t' \jb n ^2} - \eps^{-2} \ft{\D_{\eps}}(n,t') \big).
\end{align*}

\noi
Hence, by using \eqref{mul3}, we have

\noi
\begin{align}
|\III| \les_T \eps^{2 \ta} + (\eps + h_1)^{2 \ta},
\label{S16}
\end{align}

\noi
for $\jb n \le \min\big( \eps^{-1 + \ta}, (\eps+h_1)^{- 1 + \ta} \big) = (\eps+h_1)^{- 1 + \ta}$. We claim that we have

\noi
\begin{align}
|\III| \les |h_1|^{\g_1} \jb{n}^{-2 + \g_2},
\label{S17}
\end{align}

\noi
for some $\g_1 > 0$ and some small $0 < \g_2 \ll \s$. We may assume $h_1 \eps^{-7}  > h_1^{\g}$ for $0 < \g \ll 1$, for otherwise, interpolating \eqref{S14} and \eqref{S15} gives \eqref{S17}. We then have $\eps \le h_1 ^{\frac{1-\g}{7}}$. If $\jb n \le (\eps+h_1)^{- 1 + \ta}$, we have 

\noi
\begin{align}
\eqref{S16} \les h_1 ^{\frac{2 \ta (1-\g)}{7}}.
\label{S18}
\end{align}

\noi
Interpolating \eqref{S18} with \eqref{S14} then yields \eqref{S17}. Otherwise $\jb n > (\eps+h_1)^{-1 + \ta}$ and hence 

\noi
\begin{align*}
\eqref{S14} \les \jb{n}^{-2 + \g} (\eps + h_1)^{ \g(1 - \ta) } \les \jb{n}^{-2 + \g} h_1^{ \frac{\g(1-\g)}{7} (1 - \ta) },
\end{align*}

\noi
for $0 < \g \ll \s$. This concludes the proof of \eqref{S17}.

We now estimate $\IV$. By using \eqref{mul2}, we have

\noi
\begin{align}
|\IV| \les |h_2| \eps^{-2}.
\label{S19}
\end{align}

\noi
Since we can write 

\noi
\begin{align}
\begin{split}
\eps^{-2} \dl^2_{h_2}  \ft{\D_{\eps}}(n,t') & = \big( \eps^{-2} \ft{\D_\eps}(n,t'+h_2) - e^{- (t'+h_2) \jb n ^2} \big) \\
& \quad + \big( e^{- t' \jb n ^2} - \eps^{-2} \ft{\D_\eps}(n,t') \big) \\
& \quad + \big(   e^{- (t'+h_2) \jb n ^2} -  e^{- t' \jb n ^2} \big),
\end{split}
\label{S20}
\end{align}

\noi
we have, from \eqref{mul3}, \eqref{S20} and the mean value theorem, the bound

\noi
\begin{align}
|\IV| \les \eps^{2 \ta } + |h_2|,
\label{S21}
\end{align}

\noi
for $\jb n \le \eps^{-1 + \ta}$. Combining \eqref{S14}, \eqref{S19} and \eqref{S21} and arguing as in the estimate of the term $\III$, we deduce

\noi
\begin{align}
| \IV | \les |h_2|^{\g_1} \jb{n}^{-2 + \g_2}
\label{S22}
\end{align}

\noi
for some $\g_1 > 0$ and some small $0 < \g_2 \ll \s$.

Thus, we deduce from \eqref{S13}, \eqref{S17} and \eqref{S22}, the estimate

\noi
\begin{align}
| G_1(n,t,\eps, h_1,h_2) | \les \| (h_1, h_2) \|_2^{\g} \, \jb{n}^{-2 + \g},
\label{S23}
\end{align}

\noi
for $0 < \g \ll \s$. Hence, \eqref{S6} follows from \eqref{S10} with \eqref{S11}, \eqref{S12} and \eqref{S23}.

Arguing as in the computations leading to \eqref{S4} and \eqref{S5}, we deduce that for $\l \in \N$, $M \ge N$, finite $p,q \ge 1$, $\eps \in [0,1]$, $t \in [0,T]$ and $h_1, h_2 \in \R$ such that $\eps + h_1 \in [0,1]$ and $t + h_2 \in [0,T]$, the following bounds hold:

\noi
\begin{align}
\begin{split}
 \| \dl_{h_1,h_2} : \! \Psi_{\eps,N}^{\ell} \! : (t)  \|_{L^p(\O) W^{-\s, \infty}_x } & \les_{p,\l} \| (h_1, h_2) \|_2^{\g}, \\
\| \dl_{h_1,h_2} ( : \! \Psi_{\eps,N}^{\ell} \! :  - : \! \Psi_{\eps,M}^{\ell} \! :  ) (t)  \|_{L^p(\O) W^{-\s, \infty}_x } & \les_{p,\l} N^{- \g} \| (h_1, h_2) \|_2^{\g},
\end{split}
\label{S24}
\end{align}

\noi
for $\g >0 $ small enough. 

Fix $n \in \Z^2$. Given the smoothness of $(\eps,t) \mapsto \ft{\D_\eps}(n,t')$, the following integration by parts formula holds almost-surely:

\noi
\begin{align}
\int_0^t \eps^{-2} \ft{\D_\eps}(n,t') dB_n(t') = \eps^{-2} \ft{\D_\eps}(n,t) B_n(t) -  \int_0^t  \eps^{-2} \dt \ft{\D_\eps}(n,t') B_n(t') dt'.
\label{IPP}
\end{align}

\noi
for any $t \ge 0$ and where the Brownian motion $B_n$ is as in \eqref{W1}. Hence, we infer from \eqref{IPP}, \eqref{convo10} and Lemma \ref{LEM:F3} that for each $N \in \N$, the map $(\eps,t)  \mapsto  \Psi_{\eps,N} $ belongs to $C \big( (0,1] \times  [0,T];  H_x^{\infty} \big)$; whence so does $(\eps, t) \mapsto  :\! \Psi_{\eps,N}^{\ell} \!:$ by \eqref{Herm1}. Thus, by applying Lemma \ref{LEM:kol} on $(0,1] \times [0,T]$, we have the following bounds:

\noi
\begin{align}
\begin{split}
 \sup_{N \in \N} \|:\! \Psi_{\eps,N}^{\ell} \! :\|_{L^p(\O) C ( (0,1] \times [0,T]; W^{-\s, \infty}_x  )} & \les 1, \\
\sup_{N \in \N} \sup_{M \ge N} N^{\g} \|: \! \Psi_{\eps,N}^{\ell} \! : - : \! \Psi_{\eps,M}^{\ell} \! :   \|_{L^p(\O) C ( (0,1] \times [0,T]; W^{-\s, \infty}_x  )} & \les 1.
\end{split}
\label{S71a}
\end{align}

\noi
The bounds \eqref{S71a} show that the sequence $\{: \! \Psi_{\eps,N}^{\ell} \! :\}_{N \in \N}$ is Cauchy and hence converges to a limit in $L^p\big(\O; C ( (0,1] \times [0,T]; W^{-\s, \infty}_x  )\big)$. By uniqueness of the almost sure limit in $L^p \big( \O; L^2 \big( (0,1] \times [0,T];  W^{-\s, \infty}(\T^2) \big) \big) $, this limit is given by $(\eps, t) \mapsto : \! \Psi_{\eps}^{\ell} \! :(t)$. Now, \eqref{S71a} and another application of the Borel-Cantelli lemma produces a set $\O_0$ of full $\PP$-probability such that

\noi
\begin{align}
: \! \Psi_{\eps,N}^{\ell} \! : \too : \! \Psi_{\eps}^{\ell} \! :, \quad \text{in  } C ( (0,1] \times [0,T]; W^{-\s, \infty}_x  ),
\label{S71}
\end{align}

\noi
on $\O_0$.

Let us verify the continuity at $\eps = 0$ of our stochastic objects. Fix $N \in \N$. By following the proof of \eqref{S6}, we can show

\noi
\begin{align*}
\E \big[  \big( \jb{\nb}^{- \s} \dl^2_{h_2} ( : \! \Psi_{\eps,N}^{\ell} (t) \! : - \Psi_{0,N}^{\ell} (t) )(x) \big)^2  \big] \les \eps^\g |h_2|^{\g},
\end{align*}

\noi
for each fixed $\eps \in (0,1]$, $t \in [0,T]$ and $h_2 \in \R$ such that $t + h_2 \in [0,T]$ and some $0 < \g, \sigma \ll 1$. From the usual Kolmogorov continuity criterion, we then obtain the bound

\noi
\begin{align}
\big\| \sup_{\eps \in(0,1]} \eps^{-\g}\|  : \! \Psi_{\eps,N}^{\ell} \!  : - : \! \Psi_{0,N}^{\ell} \!  :  \|_{ C_T W^{-\s, \infty}_x }\big\|_{L^p(\O)} \les_p 1,
\label{S70}
\end{align}

\noi
for any $p \ge 1$. Hence, by \eqref{S70} and Chebyshev's inequality, we get a set of full measure $\O_N$ such that 

\noi
\begin{align}
\|  : \! \Psi_{\eps,N}^{\ell} \!  : - : \! \Psi_{0,N}^{\ell} \!  :  \|_{ C_T W^{-\s, \infty}_x }  \les \eps^\g,
\label{S70b}
\end{align}

\noi
for any $\eps \in (0,1]$, on $\O_N$. We define $\O_{\text{final}}$ to be the full probability set

\noi
\begin{align*}
\O_{\text{final}} := \O_0 \cap \bigcap_{N \in \N} \O_N. 
\end{align*}

\noi
Hence, by \eqref{S70b}, we get that for each $N \in \N$, $: \! \Psi_{\eps,N}^{\ell} \!  :$ is continuous at $\eps = 0$ on $\O_{\text{final}}$. We thus deduce from \eqref{S71} that

\noi
\begin{align}
: \! \Psi_{\eps,N}^{\ell} \! : \too : \! \Psi_{\eps}^{\ell} \! :, \quad \text{in  } C ( [0,1] \times [0,T]; W^{-\s, \infty}_x  ),
\label{S72}
\end{align}

\noi
on $\O_{\text{final}}$. This finishes the proof.
\end{proof}

\noi
\subsection{Local theory}\label{SUBSEC:LWP} In this subection, we prove Theorem \ref{THM1}. Fix an integer $k \ge 2$. First, we consider the following {\it enhanced equation} on functions of the variables $(x, \eps, t)$:

\noi
\begin{align}
\begin{cases}
\eps^2 \dt^2 v + \dt v +(1-\Dl)v  +
 \sum_{\ell=0}^k {k\choose \ell} \, \Xi_\l   \, v^{k-\ell}
=0\\
(v,  \ind_{\eps > 0} \dt v) ) |_{t=0} =(\phi_0, \ind_{\eps > 0} \phi_1),
\end{cases}
\quad (x,\eps,t) \in \T^2 \times [0,1] \times \R_+,
\label{SNLW5}
\end{align}

\noi
for given initial data $(\phi _0, \phi _1)$ and a source $(\Xi_1 , \dots, \Xi_k)$
with the understanding that $\Xi_0 \equiv 1$. 

Here, we present a local well-posedness argument for \eqref{SNLW5} based on Sobolev's inequality as in \cite{GKOT}. More precisely, we prove in Proposition \ref{PROP:LWP} below, the existence of a solution $v = v (\eps,t) $ to \eqref{SNLW5} which belongs to $C\big( [0,1] \times [0,T]; H^{\kk}(\T^2) \big)$, for some $\kk >0$. Note that, for each fixed $\eps \in [0,1]$, $v_\eps = v(\eps, \cdot)$ solves \eqref{SNLW4}. By using the convergence of the stochastic objects in Proposition \ref{PROP:sto}, the proof of Theorem \ref{THM1} essentially reduces to proving that $v_\eps$ converges to $v_{\eps = 0}$ as $\eps \to 0$. However, by construction, this immediately follows from the continuity of $\eps \mapsto v (\eps, \cdot)$ at $\eps =0$. Hence, the convergence in Theorem \ref{THM1} is a consequence of the existence of the solution $v$ to \eqref{SNLW5}. We postpone the proof of Theorem \ref{THM1} to the end of this section.

Given $\ta >0$, $T >0$, and $k \ge 2$ define $\mathcal{X}^{k,\ta}_T(\T^2) = \mathcal{X}^{\ta}_T(\T^2)$  by 

\noi
\begin{align*}
\mathcal{X}^\ta_T(\T^2) := \big(C \big( [0,1] \times [0, T]; W^{-\ta, \infty}(\T^2) \big) \big)^{\otimes k},
\end{align*}

\noi
and set
\[ \|\pmb{\Xi}\|_{\mathcal{X}^\ta_T}
=  \sum_{j = 1}^k \| \Xi_j \|_{C([0,1] \times [0, T]; W_x^{-\ta, \infty})},\]

\noi
for $\pmb{\Xi} = (\Xi_1, \Xi_2, \dots, \Xi_k ) \in \mathcal{X}^\s_T(\T^2)$. In what follows, we use the shorthand notation $\mathcal{X}^\ta(\T^2)$ for $\mathcal{X}^\ta_1(\T^2)$. 

We have the following local well-posedness result for \eqref{SNLW5}.

\begin{proposition}\label{PROP:LWP}
Fix an integer $k \geq 2$ and $\dl \le \frac{1}{2(k-1)} $. Let $0 < \ta \ll \dl$. Then, the equation
\eqref{SNLW5} is locally well-posed
in $ \H^{1-\dl} \times \mathcal{X}^{\ta}(\T^2)$.
More precisely, 
given an enhanced data set: 
\begin{align}
 (\phi _0, \phi _1,  \pmb{\Xi}) \in \H^{1-\dl}(\T^2) \times \mathcal{X}^{\ta}(\T^2), 
\label{data1}
\end{align}

\noi
with $\pmb{\Xi} = (\Xi_1, \Xi_2, \dots, \Xi_k )$, there exist $T = T(\|  (\phi_0, \phi_1) \|_{\H^{ 1- \dl}_x} , \|\pmb{\Xi}\|_{\mathcal{X}^{\s}}) \in (0, 1]$
and a unique solution $v = v(\eps,t)$ to \eqref{SNLW5} in the class
\begin{align}
C \big( [0,1] \times [0, T]; H^{1-\dl}(\T^2) \big).
\label{Z1}
\end{align}
In particular, the uniqueness of $v$ 
holds in the entire class \eqref{Z1}.
Furthermore, the solution map 

\noi
\begin{align*}
(\phi_0, \phi_1,
\pmb{\Xi}) \in \H^{1-\dl}(\T^2) \times \mathcal{X}^{\ta}(\T^2)
\mapsto 
v \in C( [0,1] \times [0, T]; H^{1-\dl}(\T^2))
\end{align*}

\noi
is locally Lipschitz continuous.

\end{proposition}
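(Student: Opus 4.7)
The proof is a contraction mapping argument applied to the Duhamel formulation of \eqref{SNLW5}. Combining \eqref{L6} and \eqref{L8} (the latter for $\eps = 0$), a mild solution $v$ to \eqref{SNLW5} is a fixed point of
\begin{align*}
\Phi[v](\eps, t) = P_\eps(t)(\phi_0, \phi_1) - \sum_{\ell=0}^{k} \binom{k}{\ell} \I_\eps\big( \Xi_\ell(\eps, \cdot)\, v(\eps, \cdot)^{k-\ell}\big)(t),
\end{align*}
with the convention $\Xi_0 \equiv 1$, where $P_\eps$ is defined in \eqref{O1a} and $\I_\eps$ in \eqref{OP1}. I would set up the fixed-point problem in the Banach space $Y_T := C([0,1] \times [0,T]; H^{1-\dl}(\T^2))$ and search for $v$ inside a closed ball around the free evolution $P_\eps(\phi_0,\phi_1)$.

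For the linear terms, Corollary \ref{COR:det} supplies the uniform-in-$\eps$ bounds
\begin{align*}
\|P_\eps(\phi_0, \phi_1)\|_{Y_T} \les \|(\phi_0, \phi_1)\|_{\H^{1-\dl+\ta}_x}, \qquad \|\I_\eps(F)\|_{Y_T} \les T^{1/2} \|F\|_{C_{\eps, T} H^{-\dl+\ta}_x},
\end{align*}
the $\ta$-derivative loss being absorbed by the enhanced regularity of the data. For the nonlinear term with $\ell \ge 1$, I would apply Lemma \ref{LEM:bilin}(ii) with $s = \dl - \ta$, $p = \infty$, $q = r = 2$ (whose scaling condition $\tfrac{1}{2} \le \tfrac{1}{2} + \tfrac{\dl - \ta}{2}$ is met) together with the embedding $W^{-\ta, \infty}(\T^2) \hookrightarrow W^{-(\dl - \ta), \infty}(\T^2)$ (valid since $\dl \ge 2\ta$), yielding
\begin{align*}
\|\Xi_\ell v^{k-\ell}\|_{H^{-\dl+\ta}_x} \les \|\Xi_\ell\|_{W^{-\ta, \infty}_x} \|v^{k-\ell}\|_{H^{\dl-\ta}_x}.
\end{align*}
Iterated application of Lemma \ref{LEM:bilin}(i) with the symmetric choice $p_1 = q_1 = 2(k-\ell)$ then gives $\|v^{k-\ell}\|_{H^{\dl - \ta}_x} \les \|v\|_{L^{2(k-\ell)}_x}^{k-\ell-1} \|v\|_{W^{\dl-\ta, 2(k-\ell)}_x}$, and the Sobolev embedding $H^{1-\dl}(\T^2) \hookrightarrow W^{\dl-\ta, 2(k-\ell)}(\T^2)$ closes the chain precisely under the constraint $\dl \le \frac{1}{2(k-\ell)} + \frac{\ta}{2}$; the worst case $\ell = 1$ recovers the stated hypothesis. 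The $\ell = 0$ term (no $\Xi$ factor) is estimated directly by duality and Sobolev: $\|v^k\|_{H^{-\dl+\ta}_x} \les \|v\|_{L^{2k/(1+\dl-\ta)}_x}^k \les \|v\|_{H^{1-\dl}_x}^k$, whose threshold $\dl \le \frac{1-\ta}{k-1}$ is strictly weaker.

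Combining the above, $\Phi$ sends the ball of radius $R \sim \|(\phi_0,\phi_1)\|_{\H^{1-\dl+\ta}_x} + \|\pmb{\Xi}\|_{\mathcal{X}^\ta}$ in $Y_T$ into itself and is a strict contraction once $T = T(\|(\phi_0,\phi_1)\|_{\H^{1-\dl+\ta}_x}, \|\pmb{\Xi}\|_{\mathcal{X}^\ta})$ is small enough, thanks to the $T^{1/2}$ gain. The multilinear Lipschitz bound needed for contraction (and for local Lipschitz continuity of the solution map) follows by factoring $v_1^{k-\ell} - v_2^{k-\ell}$ into a sum of terms of the form $v_1^a v_2^b (v_1 - v_2)$ with $a + b = k - \ell - 1$ and applying the same product chain to each. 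Banach fixed point then yields a unique solution in the ball; unconditional uniqueness in the full class \eqref{Z1} is obtained by noting that any two solutions in this class satisfy the same contraction estimate on a possibly shorter subinterval, forcing agreement, and bootstrapping.

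The main obstacle is the tight interplay between the $\ta$-derivative loss coming from the uniform-in-$\eps$ Duhamel estimate in Corollary \ref{COR:det} (forced by the need to extend $\I_\eps$ continuously up to $\eps = 0$, where the parabolic smoothing is traded for the weaker wave-type gain of a single derivative) and the product estimate for $v^{k-1}$, which leaves only the slack $\ta/2$ encoded in the hypothesis $\dl \le \frac{1}{2(k-1)} + \frac{\ta}{2}$. A secondary technical point is to verify that the fixed point genuinely lies in $C([0,1] \times [0,T]; H^{1-\dl})$ including the endpoint $\eps = 0$; this is inherited from the continuous-in-$\eps$ extensions of $P_\eps$ and $\I_\eps$ guaranteed by Corollary \ref{COR:det}.
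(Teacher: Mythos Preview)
Your argument is correct and follows the same route as the paper: Banach fixed point on the Duhamel map in $C([0,1]\times[0,T];H^{1-\dl})$, with Corollary~\ref{COR:det} for the linear part and Lemma~\ref{LEM:bilin} plus Sobolev embedding for the nonlinearity, arriving at the identical threshold $\dl \le \frac{1}{2(k-1)}+\frac{\ta}{2}$ from the worst case $\ell=1$. One small technicality: Lemma~\ref{LEM:bilin}(ii) as stated requires $1<p<\infty$, so your choice $p=\infty$ is not literally covered; the paper instead takes $p=\frac{2}{\dl-\ta}$ (finite) and uses the embedding $W^{-\ta,\infty}\hookrightarrow W^{-(\dl-\ta),\,2/(\dl-\ta)}$ on the torus, which you can substitute without changing anything else.
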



\noi
\begin{proof}
By writing \eqref{SNLW5} in the Duhamel formulation, we
have 

\noi
 \begin{align}
 \begin{split}
v(\eps,t) = \G (v)(\eps,t) \deff
\ &  P_\eps(t)(\phi _0,\phi _1)\\
  & -
\,  \sum_{\ell=0}^k  {k\choose \ell} \I_\eps(t) 
\big( \Xi_\l(\eps,t) \,  v(\eps,t)^{k-\ell}\big) 
\end{split}
\label{SNLW15}
\end{align}

\noi
where the map $\G = \G_{\pmb{\Xi}}$ depends on the enhanced data set
$\pmb{\Xi}$ in \eqref{data1} and $P_\eps$ and $\I_\eps$ are as in \eqref{O1a}, \eqref{OP1}.
Fix  $0 < T \le 1$. We have from Corollary \ref{COR:det},

\noi
\begin{align}
\| P_\eps(t) (\phi _0, \phi _1) \|_{C_{\eps,T} H^{1-\dl}_x } \les  \|  (\phi _0, \phi _1)  \|_{\H^{1-\dl}_x}
\label{Z2}
\end{align}

We first treat the case $\l =0$.
From  Corollary \ref{COR:det} and Sobolev's inequality (twice), we obtain

\noi
\begin{align}
\begin{split}
\| \I_\eps(t)( v^k) \|_{ C_{\eps,T} H^{1-\dl}_x}
&   \les T^{\frac12} \| v^k\|_{C_{\eps,T} H^{-\dl}_x}
\les T^{\frac12} \| v^k\|_{C_{\eps,T} L^{\frac{2}{1+\dl}}_x}
 \les T^{\frac12} \| v\|_{C_{\eps,T} L^{\frac{2k}{1+\dl}}_x}^k\\
& \les T^{\frac12} \| v\|_{C_{\eps,T} H^{1-\dl}_x}^k, 
\end{split}
\label{Z3}
\end{align}

\noi
provided that 
\begin{align*}
0 \leq \dl \leq \frac{1}{k-1}.
\end{align*}

\noi
For $1 \leq \l \leq k-1$, 
it follows from Corollary \ref{COR:det}, Lemma \ref{LEM:bilin} (i) and (ii), and Sobolev's inequality that 

\noi
\begin{align}
\begin{split}
\| \I_\eps(t) (\Xi_\l \,  v^{k-\ell} ) \|_{C_{\eps,T} H^{1-\dl}_x}
& \les T^\frac{1}{2}\| \Xi_\l\,  v^{k-\ell}\|_{C_{\eps,T} H^{-\dl}_x}\\
& \les T^\frac{1}{2}\|\jb{\nb}^{-\dl} \Xi_\l
\|_{C_{\eps,T} L^\frac{2}{\dl}_x}
\| \jb{\nb}^{\dl} v^{k-\ell}\|_{C_{\eps,T} L^{2}_x}\\
& \les T^\frac{1}{2}\|\pmb{\Xi}\|_{\mathcal{X}^{\ta}}
\| \jb{\nb}^{\dl} v\|_{C_{\eps,T} L^{2(k-\l)}_x}^{k-\l}\\
& \les T^\frac{1}{2}\|\pmb{\Xi}\|_{\mathcal{X}^{\ta}}
\|  v\|_{C_{\eps,T} H^{1-\dl}_x}^{k-\l}, 
\end{split}
\label{Z4}
\end{align}

\noi
provided that 
\begin{align}
0 \leq \dl \leq \frac 1{2(k-1)}.
\label{Z5}
\end{align}

\noi
Lastly, again from Corollary \ref{COR:det}, we have

\noi
\begin{align}
\begin{split}
\| \I_\eps(t)(
  \Xi_k)  \|_{C_{\eps,T} H^{1-\dl}_x}
&   \les T^\frac{1}{2} \| \Xi_k\|_{C_{\eps,T} H^{-\dl}_x}
\leq
 T^\frac{1}{2}\|\pmb{\Xi}\|_{\mathcal{X}^{\ta}}.
\end{split}
\label{Z6}
\end{align}

Putting
 \eqref{SNLW15}, \eqref{Z2}, \eqref{Z3}, \eqref{Z4} and \eqref{Z6}
 together, we have
 
\noi
\begin{align*}
\|\G(v)\|_{C_{\eps,T} H^{1-\dl}_x}
&   \le
C_1 \| (\phi_0, \phi_1)\|_{\H_x^{1-\dl}}
+ 
C_2  T^\frac{1}{2}
 \big(1 + \|\pmb{\Xi}\|_{\mathcal{X}^{\ta}}\big)
\big( 1 +  \| v\|_{C_{\eps,T} H^{1-\dl}_x}\big)^k, 
\end{align*}

\noi
as long as \eqref{Z5} is satisfied. 

By similar arguments, the following difference estimate holds:

\noi
\begin{align*}
\|\G(v_1)-\G(v_2)\|_{C_{\eps,T} H^{1-\dl}_x}
&   \le
C_2  T^\frac{1}{2} \|v_1-v_2\|_{C_{\eps,T}H^{1-\dl}_x}
 \big(1 + \|\pmb{\Xi}\|_{\mathcal{X}^{\ta}}\big)
\big( 1 +  \| v\|_{C_{\eps,T} H^{1-\dl}_x}\big)^{k-1}, 
\end{align*}

\noi
as long as \eqref{Z5} is satisfied. Therefore, 
by choosing
$T = T(\| (\phi_0, \phi_1)\|_{\H_x^{1-\dl}}, \|\pmb{\Xi}\|_{\mathcal{X}^{\ta}}) >0$ sufficiently small, 
we conclude that $\G$ is a contraction in the ball 
$B_R \subset  C \big( [0,1] \times [0, T]; H^{1-\dl}(\T^2) \big)$ of radius
$R = 2C_1 \| (\phi_0, \phi_1)\|_{\H^{1-\dl}_x} + 1$.
At this point, the uniqueness holds only in the ball $B_R$
but by a standard continuity argument, 
we can extend the uniqueness to hold
in the entire $C \big([0,1] \times [0, T]; H^{1-\dl}(\T^2) \big)$. The regularity of the map $(\phi_0, \phi_1,
\pmb{\Xi}) \in \H^{1-\dl}(\T^2) \times \mathcal{X}^{\ta}(\T^2)
\mapsto 
v \in C( [0,1] \times [0, T]; H^{1-\dl}(\T^2))$ is easily obtained through similar estimates. We omit details. 
\end{proof}

We now prove Theorem \ref{THM1}. We recall that, with a slight abuse of notations, wave equations for which $\eps = 0$ are viewed as heat equations.

\noi
\begin{proof}[Proof of Theorem \ref{THM1}] Fix $k \ge 2$ and $(\phi_0,\phi_1) \in \H^s(\T^2)$ for $\frac{2k-3}{2k-2} \le s < 1$. Let $0 < \ta \ll 1-s$. 

\medskip

\noi
$\bullet$
{\bf Step 1: Construction of solutions.} Let $\pmb{\Xi} = (\Psi_{\eps}, : \! \Psi_{\eps} ^2 \! :, \cdots, : \! \Psi_{\eps} ^k \! : )_{\eps \in [0,1]}$. On the full probability set $\O_0$ constructed in Proposition \ref{PROP:sto}, we have that $\pmb{\Xi}_N, \pmb{\Xi} \in \mathcal{X}^{\ta}(\T^2)$ for any $N \in \N$ and $\pmb \Xi _N \to \pmb \Xi$ in $\mathcal{X}^{\ta}(\T^2)$ as $N \to \infty$.

By Proposition \ref{PROP:LWP} we get, for each $N \in \N$, a function $v_N = v_N(\eps,t)$ (resp. $v = v(\eps,T)$) which belongs to $C \big(  [0,1] \times [0,T] ; H^s(\T^2)  \big)$ for some almost surely positive time $0 <T \le 1$ (which is uniform in $N \in \N$ since $\sup_{N \in \N} \|\pmb{\Xi}_N\|_{\mathcal{X}^\ta} < \infty$) and that solves \eqref{SNLW5} with data given by $\pmb{\Xi}_N$ (resp. $\pmb{\Xi}$). Furthermore, by the continuity of the map $(\phi_0, \phi_1, \pmb{\Xi}) \mapsto v$ proved in Proposition \ref{PROP:LWP}, we deduce that $v_N$ converges to $v$ in $C ([0,T]; H^s(\T^2))$ as $N \to \infty$ on $\O_0$.

For any $\eps \in [0,1]$ and $N \in \N$, define $v_{\eps,N} = v_N(\eps, \cdot)$ and $v_{\eps} = v(\eps, \cdot)$. By construction, $v_{\eps,N}$ (resp. $v_\eps$) solves \eqref{SNLW11} (resp. \eqref{SNLW4}) with initial data $(\phi_0, \ind_{\eps >0} \phi_1 )$\footnote{Here and in what follows, with a slight abouse of notation, we understand $(\phi_0, \ind_{\eps >0} \phi_1 )$ as $(\phi_0, \phi_1)$ for $\eps >0$ and as $\phi_0$ for $\eps =0$.} and belongs to $C ([0,T]; H^s(\T^2))$. For $\eps \in [0,1]$ and $N \in \N$, let $u_{\eps,N} = \Psi_{\eps,N} + v_{\eps,N}$. Then, $u_{\eps,N}$ is the solution to \eqref{SNLW10} with initial data $(\phi_0, \ind_{\eps >0} \phi_1 )$. Then, by the above and Proposition \ref{PROP:sto}, $u_{\eps,N}$ converges to the process $u_\eps := \Psi_\eps + v_\eps$ in $C ([0,T]; H^{-\s}(\T^2))$, $\s >0$, as $N \to \infty$ on $\O_0$.

\smallskip
\noi
$\bullet$
{\bf Step 2: Convergence.} By the continuity of the map $\eps \mapsto v_\eps $ and Proposition \ref{PROP:sto}, we have that $u_\eps$ converges to $u_{\eps = 0}$ in $C ([0,T]; H^{-\s}(\T^2))$, $\s >0$, as $\eps \to 0$ on $\O_0$. This concludes the proof of Theorem \ref{THM1}.
\end{proof}

\noi
\subsection{Asymptotic global well-posedness}\label{SEC6}

The purpose of this subsection is to prove Theorem \ref{THM:GWP}. We recall the following global well-posedness result from \cite{MW1} adapted to our notations. See also \cite[Theorem 3.9]{TW} and \cite[Proposition 6.1]{Trenberth}.

\noi
\begin{lemma}\label{LEM:gwpheat}
Let $k \ge 2$ be an odd integer. Let $0< s <1$ and $\phi_0 \in H^s(\T^2)$. Let $v = v(\eps,t)$ be the solution to \eqref{SNLW15} with data given by $(\phi_0, \phi_1, \pmb{\Xi} )= (\phi_0, \phi_1, \Psi_{\eps}, : \! \Psi_{\eps} ^2 \! :, \cdots, : \! \Psi_{\eps} ^k \! : )_{\eps \in [0,1]}$ constructed in Theorem \ref{THM1}. Then, the function $v_0 = v(0, \cdot)$ exists globally in time. Moreover, for any fixed $T>0$, we have $v_0 \in C \big( [0,T]; H^s(\T^2) \big)$.
\end{lemma}

In order to prove Theorem \ref{THM:GWP}, we have to extend the existence time of the solution $\{v_\eps\}_{\eps \in (0,\eps_0]}$ to \eqref{SNLW5} restricted to the range $(0, \eps_0]$, for some fixed $\eps >0$ to be chosen later. The main idea to achieve this goal is to combine two ingredients: the global existence of $v_0$ provided by Lemma \ref{LEM:gwpheat} and the fact that $v_0$ approximates (locally-in-time) $v_\eps$ for $\eps \in (0, \eps_0]$ if $\eps_0$ is small enough. Hence, for any fixed target time $T>0$, we hope to use the quantity $\| v_0 \|_{C_T H^s_x}$, $0<s<1$, as an a priori bound on the growth in time of the relevant Sobolev norm of $v_\eps$, $\eps \in (0, \eps_0]$. To do so, we have to iterate the local well-posedness argument of Proposition \ref{PROP:LWP} starting from a small time $T>0$ and solve the following fixed point problem:

\noi
\begin{align}
 \begin{split}
v_\eps(t) = &  (\eps^{-2} + \dt) \D_\eps(t) v_\eps(T) + \D_{\eps}(t) \dt v_\eps(T)   \\
  & -
\,  \sum_{\ell=0}^k  {k\choose \ell} \int_{T}^t \eps^{-2} \D_{\eps}(t-t')
\big( \Xi_{\l}(\eps,t') \,  v_\eps(t')^{k-\ell}\big) dt', \quad t \in [T, + \infty).
\end{split}\label{SNLW6}
\end{align}

\noi
Solving \eqref{SNLW6} requires having a bound on $\dt v_\eps (T)$. However, by taking the time-derivative of \eqref{SNLW15} it is easy to see that the expression for $\dt v_\eps(T)$ contains a term of the form $\eps^{-4} \ft{\D_\eps}(T) \phi_0 $ which cannot be bounded uniformly in $\eps >0$ in any Sobolev space. Thus, we have no uniform in $\eps >0$ control over the $\H^s(\T^2)$ norm of $v_\eps$ in Proposition \ref{PROP:LWP}.
\begin{remark}
We note that the issue discussed in the above is a feature of the convergence problem at hand. Indeed, one can easily re-iterate the local well-posedness argument of Proposition \ref{PROP:LWP} for a {\it fixed value of $\eps \in (0,1]$} by working in Sobolev spaces ($\mathcal H^s(\T^2))_{s \in \R}$ .
\end{remark}

Fortunately, it turns out that the term $\D_{\eps}(t) \dt v_\eps(T)$ in \eqref{SNLW6} can be bounded uniformly in $\eps >0$. It order to capture this effect, we introduce the space $\V^s_{\eps_0}(\T^2)$ defined in \eqref{norm10} below which is a modification of the spaces $\H^s(\T^2)$ suitable for the convergence setting at hand.

In what follows, we first state a well-posedness result for an appropriate variation of \eqref{SNLW5}. Namely, for fixed $\eps_0 \in (0, 1]$, we consider the following problem:

\noi
\begin{align}
\begin{cases}
\eps^2 \dt^2 v + \dt v +(1-\Dl)v  +
 \sum_{\ell=0}^k {k\choose \ell} \, \Xi_\l   \, v^{k-\ell}
=0\\
(v,  \dt  v) ) |_{t=0} =(\phi_0(\eps),  \phi_1 (\eps) ),
\end{cases}
\quad (x,\eps,t) \in \T^2 \times (0,\eps_0] \times \R_+,
\label{SNLW7}
\end{align}

\noi
for given initial data $\eps \mapsto (\phi _0 (\eps), \phi _1 (\eps) )$ and a source $(\Xi_1 , \dots, \Xi_k)$
with the understanding that $\Xi_0 \equiv 1$. 

Given $\eps_0 \in (0,1]$ and $s \in \R$, we define the space $\mathcal{V}^s_{\eps_0}(\T^2)$ by the norm\footnote{Here, for a metric space $X$ and a Banach space and $(Y,\| \cdot \|)$, we denote by $C_b(X,Y)$ the Banach space of bounded and continuous functions endowed with the norm $\|f\|_{L^{\infty}} := \sup_{x \in X} \|f(x)\|$.}

\noi
\begin{align}
\| (\phi_0, \phi_1) \|_{\mathcal{V}^s_{\eps_0}} = \| \phi_0(\eps) \|_{C_b ((0,\eps_0]; H^s_x) } + \| \D_\eps(t) \phi_1(\eps) \|_{C_b ( (0,\eps_0] \times \R_+ ; H^s_x )}.
\label{norm10}
\end{align}

\noi
\begin{proposition}\label{PROP:LWP2}
Fix an integer $k \geq 2$, $\eps_0 \in (0,1]$ and $\dl \le \frac{1}{2(k-1)} $. Let $0 < \ta \ll \dl$. Then, the equation
\eqref{SNLW7} is locally well-posed
in $ \V_{\eps_0}^{1-\dl} (\T^2) \times \mathcal{X}^{\ta}(\T^2)$.
More precisely, 
given an enhanced data set: 
\begin{align}
(\phi _0, \phi _1,  \pmb{\Xi}) \in \V_{\eps_0}^{1-\dl}(\T^2) \times \mathcal{X}^{\ta}(\T^2), 
\label{data1}
\end{align}

\noi
with $\pmb{\Xi} = (\Xi_1, \Xi_2, \dots, \Xi_k )$ and any time $T_0 \in [0,1)$, there exist a time $ T(\|  (\phi_0, \phi_1) \|_{\V_{\eps_0}^{1-\dl}} , \|\pmb{\Xi}\|_{\mathcal{X}^{\ta}}) \in (0, 1]$, independent of $\eps_0$,
and a unique solution $\vec v := (v, \dt v)$ to \eqref{SNLW7} with initial data $(\phi_0,\phi_1)$ at time $T_0$ in the class
\begin{align}
C \big( [T_0, T]; \V_{\eps_0}^{1-\dl}(\T^2) \big).
\label{Z1SK}
\end{align}
In particular, the uniqueness of $\vec v$ 
holds in the entire class \eqref{Z1SK}.
Furthermore, the solution map 

\noi
\begin{align*}
(\phi_0, \phi_1,
\pmb{\Xi}) \in \V_{\eps_0}^{1-\dl }(\T^2) \times \mathcal{X}^{\ta}(\T^2)
\mapsto 
v \in C([T_0, T]; \V_{\eps_0}^{1-\dl}(\T^2))
\end{align*}

\noi
is locally Lipschitz continuous.
\end{proposition}

\noi
\begin{proof} Fix $\eps_0 \in (0,1]$. For simplicity, we assume that $T_0 = 0$. The integral formulation of \eqref{SNLW7} reads

\noi
\begin{align}
\vec v = \G (  v ) =  \begin{pmatrix} \G_1 ( v ) \\ \G_2 ( v) \end{pmatrix} \hspace{-1mm} ,
\label{m1}
\end{align}

\noi
where

\noi
 \begin{align}
 \begin{split}
\G_1 (v)(\eps,t) & = (\eps^{-2} + \dt) \D_\eps(t) \phi_0 (\eps) + \D_\eps(t) \phi_1 (\eps) \\
& \qquad  - \sum_{\ell=0}^k  {k\choose \ell} \I_\eps(t) 
\big( \Xi_\l(\eps,t) \,  v(\eps,t)^{k-\ell}\big),
\end{split}
\label{m2}
\end{align} 

\noi
and 

\noi
\begin{align}
\begin{split}
\G_2 ( v) (\eps, t) & =  (\eps^{-2} + \dt) \dt \D_\eps(t) \phi_0 (\eps) + \dt \D_\eps(t) \phi_1 (\eps) \\
& \qquad  - \sum_{\l = 0}^k \binom{k}{\l} \int_{0}^t \eps^{-2} \dt \D_\eps(t - t') (\Xi_{\l}(\eps,t') v(\eps,t')^{k - \l}) dt'.
\end{split}
\label{m3}
\end{align}

We fix $T>0$. By arguing as in the proof of Proposition \ref{PROP:LWP}, we obtain the following bound on $\G_1 (v)$:
\noi
\begin{align}
\|\G_1(v)\|_{C ( [0,T] \times (0,\eps_0 ] ; H^{1-\dl}_x)}
&   \les
 \| (\phi_0, \phi_1)\|_{\V_{\eps_0}^{1-\dl}}
+ 
  T^\frac{1}{2}
 \big(1 + \|\pmb{\Xi}\|_{\mathcal{X}^{\ta}}\big)
\big( 1 +  \| v\|_{C_{T} \V_{\eps_0}^{1-\dl}}\big)^k, 
\label{G110}
\end{align}

\noi
for any $v \in C \big( [0, T]; \V_{\eps_0}^{1-\dl}(\T^2) \big)$. 

We now turn our attention to $\G_2$. Namely, by \eqref{norm10}, we have to estimate ${(t , \eps, t_1) \mapsto \D_\eps(t_1)  \G_2 (v) ( \eps, t)}$ in $C \big([0,T] \times (0,\eps_0] \times \R_+; H^{1-\dl}  (\T^2) \big)$ for any $v$ in $C \big( [0, T]; \V_{\eps_0}^{1-\dl}(\T^2) \big)$. 

Let $v$ in $C \big( [0, T]; \V_{\eps_0}^{1-\dl}(\T^2) \big)$. We now obtain bounds on $\| \D_\eps (t_1) \G_2 (v) ( \eps, t) \|_{H^{1-\dl}_x}$ that are uniform in $(\eps,t, t_1) \in (0,\eps_0] \times [0,T] \times \R_+$. We fix $(\eps,t, t_1) \in (0,\eps_0] \times [0,T] \times \R_+$. By Lemma \ref{COR:OP} (iv) and the fact that $\D_\eps$ and $\dt \D_\eps$ commute, we  have

\noi
\begin{align}
\| \D_\eps(t_1) \dt \D_\eps (t) \phi_1 (\eps) \|_{H^{1-\dl}_x} & \leq \| \D_\eps(t_1) \phi_1 (\eps) \|_{H^{1-\dl}_x}  \les \| ( \phi_0, \phi_1)  \|_{\V^{1-\dl}_{\eps_0}}. \label{G10}
\end{align}

\noi
We have

\noi
\begin{align}
& \big\| \D_\eps(t_1) \int_{0}^t \eps^{-2} \dt \D_\eps(t - t') (\Xi_{\l}(\eps,t') v(\eps,t')^{k - \l})  dt' \big\|_{H^{1- \dl}_x} \notag \\
\begin{split}
& \quad  \leq  \big\| \eps^{-2} \D_\eps(t_1) \int_{0}^t  \Pll \dt \D_\eps(t - t') (\Xi_{\l}(\eps,t') v(\eps,t')^{k - \l}) dt' \big\|_{H^{1-\dl}_x} \\
& \qquad +  \big\| \D_\eps(t_1) \int_{0}^t \eps^{-2} \Phh \dt \D_\eps(t - t') (\Xi_{\l}(\eps,t') v(\eps,t')^{k - \l}) dt' \big\|_{H^{1-\dl}_x} =: \1 + \II
\label{G100}
\end{split}
\end{align} 

\noi
By Remark \ref{RMK:mulstr} and Lemma \ref{COR:OP} (i) and (ii) and by arguing as in the proof of Proposition \ref{PROP:LWP}, we have

\noi
\begin{align}
\begin{split}
 \1 & \les  \big\|  \int_{0}^t \Pll \dt \D_\eps(t - t') (\Xi_{\l}(\eps,t') v(\eps,t')^{k - \l})  dt' \big\|_{H^{1-\dl}_x} \\
& \les T^{\frac12} \| \Xi_{\l}v^{k - \l} \|_{C ( [0,T] \times (0, \eps_0] ; H^{-\dl}_x )} \\
& \les T^{\frac12}  \big(1 + \|\pmb{\Xi}\|_{\mathcal{X}_T^{\ta}}\big)
\big( 1 +  \| v\|_{C_{T} \V^{1-\dl}_{\eps_0}}\big)^k.
\end{split}
\label{G101}
\end{align}

\noi
Similarly, using Remark \ref{RMK:mulstr} along with Lemma \ref{COR:OP} (iii), we get

\noi
\begin{align}
\begin{split}
\II & \les \eps^{-1} \big\| \int_{0}^t  \Phh \dt \D_\eps(T - t') (\Xi_{\l}(\eps,t') v(\eps,t')^{k - \l}) dt' \big\|_{H^{-\dl}_x} \\
& \les \eps^{-1}  \int_{0}^t e^{- \frac{t-t'}{2 \eps^2}} dt' \, \| \Xi_{\l} v^{k - \l} \|_{C ( [0,T] \times (0, \eps_0] ; H^{-\dl}_x )} \\
&  \les \eps \big(1 + \|\pmb{\Xi}\|_{\mathcal{X}_T^{\ta}}\big) \big( 1 +  \| v\|_{C_{T} \V^{1-\dl}_{\eps_0}}\big)^k.
\end{split}
\label{G102}
\end{align}

\noi
Hence, from \eqref{G100}, \eqref{G101} and \eqref{G102}, we have

\noi
\begin{align}
\begin{split}
& \big\| \D_\eps(t_1) \int_{0}^t \eps^{-2} \dt \D_\eps(t - t') (\Xi_{\l}(\eps,t') v(\eps,t')^{k - \l})  dt' \big\|_{H^{1-\dl}_x} \\
& \qquad \les \big(1 + \|\pmb{\Xi}\|_{\mathcal{X}_T^{\ta}}\big) \big( 1 +  \| v\|_{C_{T} \V^{1-\dl}_{\eps_0}}\big)^k.
\end{split}
\label{G103}
\end{align}

\noi
We also have from Lemma \ref{COR:OP} (i), (ii) and (iii),

\noi
\begin{align}
\| \D_\eps(t_1) \eps^{-2} \dt \D_\eps(t) \phi_0 \|_{H^{1-\dl}_x} &  \le \| \dt \D_\eps(t)  \phi_0 \|_{H^{1-\dl}_x} \les \| ( \phi_0, \phi_1) \|_{\V_{\eps_0}^{1-\dl}}.
\label{G104}
\end{align}

At last, we look at the term $\D_\eps(t_1) \dt^2 \D_\eps(t) \phi_0$. By differentiating \eqref{M10} and by combining Lemma \ref{COR:OP} (i), (ii) and \eqref{M11}, we get

\noi
\begin{align}
\| \D_\eps(t_1) \dt^2 \D_\eps(t) \phi_0 \|_{H^{1-\dl}_x} & \les \eps^{-4} \| \D_\eps(t_1) \D_\eps(t) \phi_0 \|_{H^{1-\dl}_x} + \| \eps^{-2} \D_\eps(t_1) e^{-\frac{t}{2 \eps^2}} \dt S_\eps(t) \phi_0 \|_{H^{1-\dl}_x} \notag \\
& \qquad + \| \D_\eps(t_1) e^{-\frac{t}{2 \eps^2}} \dt^2 S_\eps(t) \phi_0 \|_{H^{1-\dl}_x}  \notag \\
& \les \| \phi_0 \|_{H^{1-\dl}_x} + \| \D_\eps(t) e^{-\frac{t}{2 \eps^2}} \dt^2 S_\eps(t) \phi_0 \|_{H^{1-\dl}_x} \label{G12}
\end{align}

\noi
By \eqref{mul1} and \eqref{L3} and by analyzing the symbols $\ft{\D_\eps}(n,t_1) e^{- \frac{t}{2 \eps^2}} \dt ^2\ft{S_\eps}(n,t) $, $n \in \Z^2$, we easily obtain the following bound:

\noi
\begin{align}
\| \D_\eps(t_1) e^{-\frac{t}{2 \eps^2}} \dt^2 S_\eps(t) \phi_0 \|_{H^{1-\dl}_x} \les \| \phi_0 \|_{H^{1-\dl}_x}. \label{G13}
\end{align}

\noi
  Hence, from \eqref{G12} and \eqref{G13}, we have

\noi
\begin{align}
\| \D_\eps(t_1) \dt^2 \D_\eps(t) \phi_0 \|_{H^{1 - \dl}_x}  \les \| (\phi_0, \phi_1) \|_{\V_{\eps_0}^{1- \dl}}.
\label{G105}
\end{align}

\noi
Combining \eqref{G10}, \eqref{G103}, \eqref{G104} and \eqref{G13} yields

\noi
\begin{align}
\| \D_\eps (t_1) \G_2 (v) ( \eps, t) \|_{H^{1-\dl}_x} \les \| (\phi_0, \phi_1) \|_{\V_{\eps_0}^{1- \dl}} + T^{\frac12} \big(1 + \|\pmb{\Xi}\|_{\mathcal{X}_T^{\ta}}\big) \big( 1 +  \| v\|_{C_{T} \V^{1-\dl}_{\eps_0}}\big)^k,
\label{G106}
\end{align}

\noi
uniformly in $(\eps,t, t_1) \in (0,\eps_0] \times [0,T] \times \R_+$. By \eqref{G106}, the dominated convergence theorem and since the map $(\eps,t) \in [0,\eps_0] \times [0,T] \mapsto  \big( \phi_0 (\eps), \phi_1 (\eps), \{ \Xi_\ell (\eps,t) \}_{1 \le \ell \le k}, v(\eps,t)\big)$ is continuous and the map $(\eps,t) \mapsto \D_\eps(t)$ is smooth (Lemma \ref{LEM:F3}), we deduce the estimate

\noi
\begin{align}
\begin{split}
& \| \D_\eps (t_1) \G_2 (v) ( \eps, t) \|_{C ( [0,T] \times (0,\eps_0 ] ; H^{1-\dl}_x)} \\
& \qquad \qquad \quad  \les \| (\phi_0, \phi_1) \|_{\V_{\eps_0}^{1- \dl}} + T^{\frac12} \big(1 + \|\pmb{\Xi}\|_{\mathcal{X}_T^{\ta}}\big) \big( 1 +  \| v\|_{C_{T} \V^{1-\dl}_{\eps_0}}\big)^k.
\end{split}
\label{G107}
\end{align}

Hence, by \eqref{G110} and \eqref{G107}, we get

\noi
\begin{align}
\| \G(v) \|_{C_T \V_{\eps_0}^{1-\dl}} \les  \| (\phi_0, \phi_1)\|_{\V_{\eps_0}^{1-\dl+\ta}}
+ 
  T^\frac{1}{2}
 \big(1 + \|\pmb{\Xi}\|_{\mathcal{X}^{\ta}}\big)
\big( 1 +  \| v\|_{C_{T} \V_{\eps_0}^{1-\dl}}\big)^k,
\label{G111}
\end{align}

\noi
for any $v \in C \big( [0, T]; \V_{\eps_0}^{1-\dl}(\T^2) \big)$. This proves that $\G$ maps balls of $C \big( [0, T]; \V_{\eps_0}^{1-\dl}(\T^2) \big)$ into themselves. We obtain a difference estimate along the same lines. The remaining of the proof follows as in the proof of Proposition \ref{PROP:LWP}.
\end{proof}

In the next lemma, we analyze the behavior near $\eps = 0$ of the (first coordinate of the) solutions to \eqref{SNLW7} on some (possibly large) time interval $[0,T]$, $T>0$, constructed in Proposition \ref{PROP:LWP2}. Namely, we prove that they converge to $v_0$ given by Lemma \ref{LEM:gwpheat}.
 
\noi
\begin{lemma}[long time approximation]\label{LEM:GWP1}
Fix an integer $k \ge 2$ and $\frac{2k-3}{2k-2} \le s <1$. Fix ${\eps_0 \in (0,1]}$ and let $0 < \ta \ll 1- s$.  Consider the data $(\phi_0, \phi_1) \in \H^{s+\ta} (\T^2)$ \textup{(}and hence ${\eps \in (0,\eps_0]  \mapsto (\phi_0, \ind_{\eps >0} \phi_1) \in \V^{s+\ta} _{\eps_0}}$\textup{)} and $\pmb{\Xi} = (\Xi_1, \Xi_2, \dots, \Xi_k ) \in \mathcal{X}^\ta_T(\T^2)$.

Let $T >0$ and assume that there exists a solution $v$ to \eqref{SNLW7} on $(0,\eps_0] \times [0,T]$ such that

\noi
\begin{align*}
v \in C \big( (0,\eps_0] \times [0,T] ; H^s (\T^2)  \big)
\end{align*} 

\noi
with data $\eps \in (0,\eps_0]  \mapsto (\phi_0, \ind_{\eps >0} \phi_1)$ and ${\pmb{\Xi} = (\Psi_{\eps}, : \! \Psi_{\eps} ^2 \! :, \cdots, : \! \Psi_{\eps} ^k \! : )_{\eps \in [0,\eps_0]}  \in \mathcal{X}^\ta_T(\T^2)}$. Let ${v_0 \in C \big([0,T] ; H^s (\T^2)  \big)}$ be the solution to \eqref{SNLW5} on $ \{0\} \times [0,T]$ given by Lemma \ref{LEM:gwpheat}.

Then, there exists a constant ${K = K( \| v_0 \|_{C_T H^s_x} ,\| v  \|_{C ( (0,\eps_0] \times [0,T];  H_x^{s+\ta})}) >0}$ and $\g >0$ such that

\noi
\begin{align}
\| v(\eps,\cdot) - v_0 \|_{C_T H^s_x} \le K \max \big( \eps^{\g} ,  \|  \pmb \Xi (\eps) - \pmb \Xi (0)  \|_{C_T( W^{-\ta}_x)^{\otimes k}}, \|  (\phi_0, \phi_1) \|_{\H^{s+\ta}_x}\big).
\label{G1}
\end{align}

\noi
for any $\eps \in (0, \eps_0]$.
\end{lemma}

\noi
\begin{proof}
We follow an argument in \cite{OOT1, OOT2}; by adding an exponential weight to the local well-posedness norm, one is able to globalize directly a bound that only holds locally in time in the original norm. For $T > 0$ and let $\ld >0$ to be chosen later, we define the norm $\| \cdot \|_{S_{\ld,T}}$ on $C( [0,T], H^s(\T^2))$ by 

\noi
\begin{align}
\| v \|_{S_{\ld,T}} \deff \| e^{-\ld t} v \|_{C_T H^s_x}
\label{G2}
\end{align}

\noi
Note that we have the inequalities

\noi
\begin{align}
\| v \|_{S_{\ld,T}} \le \| v \|_{C_T H^s_x} \le e^{\ld T} \| v \|_{S_{\ld,T}}.
\label{G3}
\end{align}

\noi
Fix $\eps \in (0, \eps_0]$. Note that $v = v(\eps, \cdot)$ and $v_0$ verify \eqref{SNLW15} (with $\eps = 0$ for $v_0$). By \eqref{G3}, we have

\noi
\begin{align}
\| v(\eps, \cdot) - v_0 \|_{S_{\ld,T}} & \le \| (P_\eps - P_0) (\phi_0,\phi_1)  \|_{S_{\ld,T}} \\
& \qquad  + \sum_{\l = 0}^k \binom{k}{\l} \|  \I_\eps \big( \Xi_\l(\eps, \cdot) v(\eps,\cdot) ^{k - \l}\big)  - \I_0 \big( \Xi_\l(0, \cdot) v_0 ^{k - \l}\big) \|_{S_{\ld,T}} \notag \\
& \les \| (P_\eps - P_0) (\phi_0,\phi_1)  \|_{C_T H^s_x} + \max_{0 \le \l \le k} \|  (\I_\eps-\I_0) \big( \Xi_\l(\eps, \cdot) v(\eps,\cdot) ^{k - \l}\big) \|_{C_T H^s_x}  \notag \\
& \qquad +  \max_{0 \le \l \le k}  \| \I_0 \big( \Xi_\l(\eps, \cdot) v(\eps,\cdot) ^{k - \l} - \Xi_\l( 0, \cdot) v_0^{k - \l} \big) \|_{S_{\ld,T}} \notag \\
& =  \1 + \II + \III. \label{G4}
\end{align}

\noi
By \eqref{detconv}, we get

\noi
\begin{align}
\1 \les \eps^{\frac{\ta}{2}} \|  (\phi_0, \phi_1) \|_{\H^{s+\ta}_x}
\label{G5}
\end{align}

\noi
We also have from \eqref{detconv} and computations similar to those in the proof of Proposition \ref{PROP:LWP}, the following bounds:

\noi
\begin{align}
\begin{split}
\II & \les T^{\frac12} \eps^{\frac{\ta}{2}} \| \Xi_\l(\eps, \cdot) v( \eps, \cdot)^{k - \l} \|_{C ( (0,\eps_0] \times [0,T];  H_x^{s+\ta})} \\
& \les T^{\frac12}  \eps^{\frac\ta2} (1 + \| \pmb{\Xi} \|_{\mathcal{X}_T^{\ta}}) (1 + \| v  \|_{C ( (0,\eps_0] \times [0,T];  H_x^{s+\ta})})^k 
\end{split}
\label{G6}
\end{align}

We now estimate the term $\III$ in the case $\l = 0$ for convenience. From Lemma \ref{LEM:heat} and proceeding as in \eqref{Z3}, we have

\noi
\begin{align*}
\| \I_0 \big(v(\eps,\cdot) ^k - v_0 ^k \big)  \|_{S_{\ld,T}} & = \sup_{0 \le t \le T}  \big\|  \int_{0}^t e^{-\ld (t-t') } P_0(t-t') \big( e^{-\ld t'} (v(\eps,t')^{k} - v_0(t')^k ) \big) dt'  \|_{H^s_x}  \\
& \les \sup_{0 \le t \le T} \int_{0}^t e^{- \ld(t-t')  } (t-t')^{-\frac12} dt' \cdot \| e^{-\ld t} (v(\eps,\cdot)^{k} - v_0^k ) \|_{C_T H^{s-1}_x}  \\
& \les \frac{1}{\sqrt{\ld}} \|v(\eps,\cdot) - v_0 \|_{S_{\ld,T}} (1 + \| v_0 \|_{C_T H^s_x})^{k-1} (1 + \| v \|_{C( (0,\eps_0] \times [0,T]  H^s_x )})^{k-1}.
\end{align*}

\noi
By similar arguments, we get the bound

\noi
\begin{align}
\III \les_{v, v_0} \eps^{\frac\ta2} + \| \pmb \Xi (\eps) - \pmb \Xi (0)  \|_{C_T (W^{-\ta}_x)^{\otimes k}} +  \frac{1}{\sqrt{\ld}} \|v(\eps, \cdot) - v_0 \|_{S_{\ld,T}}.
\label{G7}
\end{align}

Thus, combining \eqref{G4}, \eqref{G5}, \eqref{G6} and \eqref{G7}, we deduce the existence of ${K = K( \| v_0 \|_{C_T H^s_x} ,\| v  \|_{C ( (0,\eps_0] \times [0,T];  H_x^{s+\ta})}) >0}$ such that

\noi
\begin{align}
\| v(\eps,\cdot) - v_0 \|_{S_{\ld,T}} \le K  \eps ^{\frac\ta3} + \| \pmb \Xi (\eps) - \pmb \Xi (0)  \|_{C_T (W^{-\ta}_x)^{\otimes k}} + \frac{K}{\sqrt{\ld}}  \| v_\eps - v_0 \|_{S_{\ld,T}}.
\end{align} 

\noi
This leads to

\noi
\begin{align}
\| v(\eps, \cdot) - v_0 \|_{S_{\ld,T}} \le 2 K \max \big( \eps^{\frac \ta 3} ,  \| \pmb \Xi (\eps) - \pmb \Xi (0)\|_{C_T (W^{-\ta}_x)^{\otimes k}} \big),
\label{G8}
\end{align}

\noi
upon choosing $\ld  = (2 K(\eps_0,T))^2$. We hence deduce \eqref{G1} from the second inequality in \eqref{G3} and \eqref{G8}.

\end{proof}

We now prove Theorem \ref{THM:GWP}. 

\noi
\begin{proof}[Proof of Theorem \ref{THM:GWP}]
Fix $k \ge 2$. For convenience, we only prove Theorem \ref{THM:GWP} for $T=1$. Let $(\phi_0, \phi_1) \in \H^s(\T^2)$ with $\frac{2k-3}{2k-2} < s < 1$. Let $0 < \ta \ll 1 - s$. We first fix a set of full $\PP$-probability $\O_0$ such that on $\O_0$, the following conditions hold: (i) ${\pmb{\Xi} = (\Psi_{\eps}, : \! \Psi_{\eps} ^2 \! :, \cdots, : \! \Psi_{\eps} ^k \! : )_{\eps \in [0,1]} \in \mathcal{X}^\ta(\T^2)}$ (by Proposition \ref{PROP:sto}) and (ii) $v_0 \in C \big([0,1] ; H^s (\T^2)  \big)$ the solution to \eqref{SNLW5} on $ \{0\} \times [0,1]$, with data given by $(\phi_0, \pmb{\Xi})$, given by Lemma \ref{LEM:gwpheat}. For the remaining of the proof, we work on $\O_0$ without any mention to it.

%
%
%
%
%
%

In the rest of the proof, we use the following notation: for $\eps' \in (0,1]$, we denote by $v^{\eps'} = v^{\eps'}(\eps,t,x)$ the solution to the problem \eqref{SNLW7} (with some given initial data) on the set $(x, \eps, t) \in \T^2 \times (0,\eps'] \times \R_+$. We fix $\eps_0 =1$. Consider the problem \eqref{SNLW7} with data $\eps \in (0, \eps_0] \mapsto (\phi_0, \ind_{\eps>0}\phi_1, \pmb \Xi (\eps)) \in \V_{\eps_0}^{s+\ta} \times \mathcal{X}^\ta $. By Proposition \ref{PROP:LWP2}, there exists a solution $v^{\eps_0} \in C \big( [0,T_0]; \V_{\eps_0}^s(\T^2) \big)$ to \eqref{SNLW7} on $(0, \eps_0] \times [0,T_0]$, $T_0>0$, with data $\eps \in (0, \eps_0] \mapsto (\phi_0, \ind_{\eps>0}\phi_1, \pmb \Xi (\eps)) \in \V_{\eps_0}^s \times \mathcal{X}^\ta $. By Lemma \ref{LEM:GWP1} and the assumptions (i) and (ii), we have

\noi
\begin{align}
\|v^{\eps_0}\|_{C((0, \eps_1] \times [0,T_0]; H^s_x )} \le \|v_0\|_{C_{T=1} H^s_x} +1,
\label{C11}
\end{align}

\noi
for $\eps_1 >0$ small enough. Denote by $v^{\eps_1}$ the solution to \eqref{SNLW7} on $(0, \eps_1] \times [0,T_0]$ with the same data. Then, we have $v^{\eps_1}  (\eps, \cdot) = v^{\eps_0} (\eps, \cdot)$ for $\eps \in (0, \eps_1$]. Hence, by arguing as in the proof of Proposition \ref{PROP:LWP2}, we have

\noi
\begin{align}
\|v^{\eps_1}\|_{C([0,T_0]; \V^s_{\eps_1} )}  & = \|v^{\eps_0}\|_{C([0,T_0]; \V^s_{\eps_1} )} \notag \\
&  \les \| (\phi_0, \phi_1)\|_{\H^{s}_x}
+ 
 \big(1 + \|\pmb{\Xi}\|_{\mathcal{X}^{\ta}}\big)
\big( 1 +  \| v_0 \|_{C_{T=1} H_x^{s}}\big)^k, \label{C12}
\end{align}

We can now apply the local well-posedness statement of Proposition \ref{PROP:LWP2} to extend $v^{\eps_1}$ to a larger time interval $[T_0, T_1]$ whose size $T_1 - T_0$ only depends on the (fixed) constants $\| (\phi_0, \phi_1)\|_{\H^{s}_x}$, $\|\pmb{\Xi}\|_{\mathcal{X}^{\ta}}$ and $\| v_0 \|_{C_{T=1} H_x^{s}}$. Thus, iterating this argument allows us to reach the target time $T= 1$ and provides $\eps_\star >0$ such that $v^{\eps_\star}  \in C \big( [0,T]; \V_{\eps_\star}^s(\T^2) \big) $ solves \eqref{SNLW7} on $(0, \eps_\star ] \times [0,1]$ with data $\eps \in (0, \eps_\star] \mapsto (\phi_0, \ind_{\eps>0}\phi_1, \pmb \Xi (\eps))$. By Proposition \ref{LEM:GWP1}, we obtain that $v^{\eps_\star} (\eps, \cdot)$ converges to $v_0$ on $[0,1]$ as $\eps \to 0$. Together with Proposition \ref{PROP:sto}, we have that $\Psi_\eps \to \Psi$ in $C\big([0,1]; H^{-\s}(\T^2)\big)$, $\s >0$ as $\eps \to 0$. This proves the convergence of $u_\eps = \Psi_\eps + v_\eps$ to $u_{0}  = \Psi_0 + v_0$ in $C\big([0,1]; H^{-\s}(\T^2)\big)$, $\s >0$ as $\eps \to 0$.
\end{proof}

\newpage

\section{Sine-Gordon models}

In this section, we prove Theorem \ref{THM:sinGWP}. As in the polynomial case, our argument consists in two steps: construction and convergence as $\eps \to 0$ of stochastic objects -- here essentially $\Theta_\eps$ as in \eqref{Ups} and convergence as $\eps \to 0$ at the level of the deterministic remainders \eqref{SdSG3}.

\subsection{Estimates on spatial covariance functions}\label{SUBSEC:sin1}
We aim to construct the stochastic objects $\{\U_{\eps,N}\}_{N \in \N}$ for $\eps \in [0,1]$ defined in \eqref{Ups}. Due to the non-Gaussianity of $\U_{\eps,N}$, we cannot use standard tools such as hypercontractivity (Lemma \ref{LEM:hyp}) to estimate its high moments as in Proposition \ref{PROP:sto} in order to construct the limiting (as $N \to \infty$) object $\U_\eps$. We instead perform a computation ``by hand" relying on precise estimates for the (spatial) covariance function of the stochastic convolution \eqref{convo10} as in \cite{HS} and \cite{ORSW1, ORSW2} in the cases $\eps =0$ and $\eps = 1$, respectively. We defer the construction and convergence of the stochastic objects $\{\U_{\eps,N}\}_{N \in \N}$ to the next subsection.

In this subsection, we prove estimates on the spatial covariance functions. The main technical challenge here is to study this covariance function for all $\eps \in [0,1]$ and its behavior in the limit $\eps \to 0$; see Lemmas \ref{LEM:cov} and Lemma \ref{LEM:cov2} below.

\smallskip

\noi
{\bf $\bul$ Estimates on covariance functions.} Let $N \in \N$. We define the spatial covariance function of the stochastic convolutions \eqref{convo10} and \eqref{sto1} by
 
 \noi
 \begin{align}\label{covfun}
 \Gamma_{\eps,N} (t,x-y) := \E [\Psi_{\eps,N} (t,x) \Psi_{\eps,N}(t,y)].
 \end{align}
 
 \noi
and
 
 \noi
 \begin{align}
 \Gamma_\eps(t,x-y)  := \E [ \Psi_{\eps}(t,x) \Psi_{\eps}(t,y) ],
 \label{covfun2}
 \end{align}
 
 \noi
respectively. Here,  $t \ge 0$ and $\eps \in [0,1]$ and $\Psi_\eps$ and $\Psi_{\eps,N}$ are as in \eqref{sto1} and \eqref{convo10}, respectively. It is easy to see that we have $\Gamma_{\eps,N} = \P_N ^2 \Gamma_{\eps}$.

The aim of this part of the section is to study the precise dependence in the parameter $\eps \in [0,1]$ of the covariance functions defined in \eqref{covfun} and \eqref{covfun2}; see Lemmas \ref{LEM:cov} and \ref{LEM:cov2}. Before proving the aforementioned, we first introduce some notations and prove a preparatory lemma.

 Fix $0 < \al \le 2$ and let $J_\al$ be the convolution kernel of the Fourier multiplier $\jb{\nb}^{-\al}$. Namely, $J_\al$ is the distribution

\noi
\begin{align}
J_\al(x) := \frac{1}{2 \pi} \sum_{n \in \Z^2} \frac{e_n(x)}{ \jb{n}^\al},
\label{bessel}
\end{align} 

\noi
for $x \in \T^2$. The distribution $J_{2}$ is the Green function\footnote{Namely, the solution to the linear equation $(1-\Dl)G = \dl_0$ on $\T^2$, where $\dl_0$ is the Dirac delta function at the origin.} for $1-\Dl$ and will be denoted by $G$, i.e. we write

\noi
\begin{align}
G(x) := \frac{1}{2 \pi} \sum_{n \in \Z^2} \frac{e_n(x)}{ \jb{n}^2},
\label{besselG}
\end{align} 

\noi
for $x \in \T^2$. We also define the distribution $H$ by

\noi
\begin{align}
H(t,x) := \frac{1}{2 \pi} \sum_{n \in \Z^2} \frac{1 - e^{-2 t \jb n ^2}}{ \jb n ^2 } e_n (x),
\label{heatgreen}
\end{align}

\noi
for $t \in \R_+$ and $x \in \T^2$.

The next lemma describes the behavior of the operator $J_\al$ on the spatial physical side; see \cite[Lemma 2.2]{ORSW1} for a proof.

\begin{lemma}\label{LEM:bessel}
Let $0 < \al < 2$. There exists a smooth function $R$ on $\T^2$ and a constant $c_\al$ such that 

\noi
\begin{align}
J_\al (x) = c_\al |x|^{\al - 2} + R(x)
\end{align}

\noi
for all $x \in \T^2 \setminus \{ 0 \} \cong [-\pi, \pi] \setminus \{ 0\}$.
\end{lemma}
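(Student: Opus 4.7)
The plan is to use the heat-kernel subordination identity
$$\jb{n}^{-\al} = \frac{1}{\Gamma(\al/2)}\int_0^\infty t^{\al/2-1}\,e^{-t\jb{n}^2}\,dt$$
for $\al>0$. Substituting into \eqref{bessel} and interchanging the sum with the $t$-integral produces
$$J_\al(x) = \frac{1}{(2\pi)^2 \Gamma(\al/2)}\int_0^\infty t^{\al/2-1}\, e^{-t}\, \Theta_t(x)\,dt,$$
where $\Theta_t(x) = \sum_{n \in \Z^2} e^{-t|n|^2} e^{i n \cdot x}$ is the periodic heat kernel on $\T^2$. The key preparatory move is Poisson summation, which yields $\Theta_t(x) = (\pi/t) \sum_{k\in\Z^2} e^{-|x-2\pi k|^2/(4t)}$. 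Writing the kernel in this Gaussian form is what allows the Euclidean Riesz-type singularity to be extracted cleanly.

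Next I would split the $t$-integral at $t=1$. The tail $\int_1^\infty$ contributes a function that is smooth on all of $\T^2$, since for $t\geq 1$ the series $\Theta_t$ and all its $x$-derivatives decay uniformly (from $e^{-t|n|^2}$), justifying differentiation under the integral to any order. For the short-time part $\int_0^1$, inserting the Poisson form gives a sum over $k \in \Z^2$. The terms with $k \neq 0$ are smooth in a neighborhood of $x = 0$ (where $|x-2\pi k|\geq \pi$ uniformly, making the Gaussian real-analytic in $x$ with $t$-integrable derivatives of all orders) and extend smoothly to the whole torus by periodicity. All of these contributions get absorbed into the smooth remainder $R$. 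Therefore the only singular contribution to $J_\al$ is
$$S(x) := \frac{1}{4\pi \Gamma(\al/2)}\int_0^1 t^{\al/2-2}\, e^{-t}\, e^{-|x|^2/(4t)}\,dt.$$

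Finally I would extract the $c_\al |x|^{\al-2}$ leading behavior from $S(x)$. Writing $e^{-t} = 1 + (e^{-t}-1)$, the factor $(e^{-t}-1) = O(t)$ produces a smooth-on-$\T^2$ contribution since the extra $t$ compensates the $t^{\al/2-2}$ weight and permits $x$-differentiation to all orders. In the surviving piece the substitution $s = |x|^2/(4t)$ converts the integral into
$$\frac{|x|^{\al-2}}{\pi\, \Gamma(\al/2)\, 4^{\al/2}} \int_{|x|^2/4}^\infty s^{-\al/2}\, e^{-s}\,ds.$$
Splitting $\int_{|x|^2/4}^\infty = \int_0^\infty - \int_0^{|x|^2/4}$, the first piece equals $\Gamma(1-\al/2)$ — finite precisely because $0<\al<2$ — and yields $c_\al|x|^{\al-2}$ with $c_\al = \Gamma(1-\al/2)/\bigl(\pi\,\Gamma(\al/2)\, 4^{\al/2}\bigr)$. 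The second piece, after expanding $e^{-s}$ in its Taylor series, contributes $|x|^{\al-2}\sum_{j\geq 0} a_j |x|^{2j+2-\al}$, a real-analytic even function of $|x|$ that is absorbed into $R$. The main obstacle is entirely bookkeeping: one must verify that each piece labeled \emph{smooth} really does extend smoothly across $x = 0$ and then to all of $\T^2$. In every case this follows from uniform-in-$t$ derivative estimates (using either the exponential decay of $e^{-|n|^2 t}$ for $t\geq 1$, or the uniform lower bound $|x-2\pi k|\geq \pi$ for $k\neq 0$) combined with the periodicity that is built into the Poisson-summed form.
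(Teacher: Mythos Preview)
The paper does not give its own proof of this lemma; it simply cites \cite[Lemma~2.2]{ORSW1}. Your argument via the subordination formula $\jb{n}^{-\al}=\Gamma(\al/2)^{-1}\int_0^\infty t^{\al/2-1}e^{-t\jb{n}^2}\,dt$, Poisson summation for the periodic heat kernel, and the short-time/long-time split is the standard route and is correct. The computation of $c_\al$ and the identification of the residual Taylor series $\sum_{j\ge 0}a_j|x|^{2j}$ as a smooth (even real-analytic) function near $x=0$ are both fine, the latter using that $0<\al<2$ ensures $j+1-\al/2\neq 0$ for all $j\ge 0$.

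Two minor points worth tightening if you write this out in full. First, the paper's definition \eqref{bessel} involves the smooth cutoff $\chi_N$, so the interchange of sum and $t$-integral should be justified at the level of the truncated sums and then passed to the limit (for $x\neq 0$ this is routine by dominated convergence once you have the Poisson-summed form). Second, when you say the $k\neq 0$ terms ``extend smoothly to the whole torus by periodicity,'' it is cleaner to observe that the full Poisson sum $\sum_{k\in\Z^2}e^{-|x-2\pi k|^2/(4t)}$ is already $2\pi$-periodic, so after subtracting the $k=0$ term you are left with a periodic function minus a single Gaussian; both pieces are smooth on $[-\pi,\pi]^2$ for $x$ bounded away from $0$, and only the $k=0$ Gaussian carries the singularity at $x=0$.
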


Let $\log_-$ be the function defined on $\R_{>0}$ given by

\noi
\begin{align}
\log_-(r) = \begin{cases} \log(r) & \quad \text{if $0 < r \le 1$}  \\
 0 & \quad \text{if $r >1$}.  \end{cases}
 \label{log_minus}
\end{align}

We record in the following Lemma some useful estimates on the Green function $G$ and the function $H$.

\noi
\begin{lemma}\label{LEM:green}
Let $G$ and $H$ be as in \eqref{besselG} and \eqref{heatgreen}, respectively. Then, the following estimates hold.

\smallskip

\noi
\textup{(i)} Let $N \ge 1$, $0 \le t \le 1$ and $x \in \T^2$. Then, we have

\noi
\begin{align}
& \P_N ^2 G(x) \approx - \frac{1}{2 \pi} \log_- \big( |x| + N^{-1} \big), \label{bb1} \\
& \P_N^2 H(t,x) \approx  - \frac{1}{2 \pi} \log_- \Big( \frac{  |x| + N^{-1}   }{ |x| + t^{\frac12}}  \Big). \label{bb2}
\end{align}

\smallskip

\noi
\textup{(ii)} Let $N_1, N_2 \in \R_+$ such that $N_2 \ge N_1 \ge 1$, $0 \le t \le 1$ and $x \in \T^2$. Then, we have

\noi
\begin{align}
& | (\P_{N_j}^2 G(x) - \P_{N_2} \P_{N_1}) (\P_N^2 G)(x)   | \les \min \big( 1 - \log_-( |x| + N_2^{-1} ) ,  N_1^{-4} |x|^{-4}  \big), \label{bb3} \\
& |( \P_{N_j}^2  -  \P_{N_2} \P_{N_1} ) (\P_N^2 H)(t,x)   | \les \min\Big( 1 - \log_- \Big( \frac{  |x| + N_2^{-2}   }{ |x| + t^{\frac12}}  \Big) ,   N_1^{-4} |x|^{-4}  \Big), \label{bb4}
\end{align}

\noi
uniformly in $N \ge 1$ and for $j=1,2$.
\end{lemma}

\noi
\begin{proof}
The proofs of the estimates in items (i) and (ii) pertaining to the Green function $G$ can be found in \cite[Lemma 2.3, Remark 2.4]{ORSW1}.\footnote{Smooth cutoffs are needed in the argument in \cite{ORSW1} in order to perform integration-by-parts in Fourier space.} Let us prove the estimate \eqref{bb2} on $H$. For each $N \ge 1$, we denote by $G_N$ and $H_N$ the variants of $\P_N^2 G$ and $\P_N^2 H$ given by

\noi
\begin{align*}
& G_N(x) = \frac{1}{2 \pi} \sum_{\substack{n\in \Z^2\\ \jb n \le N}} \frac{1}{ \jb{n}^2}e_n(x) & \text{and} &
& H_N(t,x) = \frac{1}{2 \pi} \sum_{\substack{n\in \Z^2\\ \jb n \le N}} \frac{1-e^{-2t \jb n^2}}{ \jb{n}^2} e_n(x),
\end{align*}

\noi
for $t \in \R_+$ and $x \in \T^2$. Note that $G_N$ (resp. $H_N$) simply consists in $\P_N^2 G$ (resp. $\P_N^2 H$) but where the smooth cutoff function $\chi_N(n)$ is replaced by the sharp cutoff $\ind_{\jb n \le N}$. Moreover, it is easy to see that

\noi
\begin{align}
\big| G_N(x) - \P_N^2 G(x) \big| + \big| H_N(t,x) - \P_N^2 H(t,x) \big| \les 1, \label{same}
\end{align}

\noi
uniformly in $t \ge 0$, $x \in \T^2$ and $N \ge 1$. Hence, in order to get the bound \eqref{bb2}, it suffices to show \eqref{bb2} with $\P_N^2 H$ replaced with $H_N$. Namely, we want to prove 

\begin{align}
H_N(t,x) \approx  -\frac{1}{2\pi} \log_- \Big( \frac{  |x| + N^{-1}   }{ |x| + t^{\frac12} + N^{-1}  }  \Big), \label{bb2b}
\end{align}

\noi
for $t \ge 0$, $x \in \T^2$ and $N \ge 1$.

 We prove the latter in what follows. Fix $0< t \le 1$,\footnote{The claimed bound is evident for $t = 0$.} $x \in \T^2$ and $N \ge 1$. We consider two cases.

\smallskip

\noi
{\bf Case 1: $N \le t^{-\frac12}$.}  Then, by the mean value theorem, we have

\noi
\begin{align*}
|H_N(t,x)| & \les  \Big| \sum_{\substack{n \in \Z^2 \\ \jb n \le N}}    \frac{1 - e^{ - 2 t \jb{n}^2 }}{ \jb{n}^2 } e_n (x) \Big| \\
& \les t \sum_{\substack{n \in \Z^2 \\ \jb n \le N}} 1 \les t N^2 \les 1.
\end{align*}

\noi
This shows \eqref{bb2b} in this case as both sides of the equality are of size $O(1)$.

\smallskip

\noi
{\bf Case 2: $t^{-\frac12} \le N$.} We now break up $H_N(t,x)$ as follows.

\noi
\begin{align}
\begin{split}
H_N(t,x) & = \sum_{\substack{n \in \Z^2 \\ \jb n < t^{-\frac12}}}   \frac{1 - e^{ - 2 t \jb{n}^2 }}{ \jb{n}^2 } e_n (x) + \sum_{\substack{n \in \Z^2 \\ t^{-\frac12} \le \jb n \le N}}   \frac{1 - e^{ - 2 t \jb{n}^2 }}{ \jb{n}^2 } e_n (x)\\
& =: \1 + \II.
\end{split}
\label{alo0}
\end{align}

\noi
By Case 1, we have the estimate

\noi
\begin{align}
| \1 | \les 1.
\label{alo1}
\end{align}

\noi
In order to bound $\II$, we write 

\noi
\begin{align}
\begin{split}
\II & = \sum_{\substack{n \in \Z^2 \\ t^{-\frac12} \le \jb n \le N}}   \frac{1}{ \jb{n}^2 } e_n (x) - \sum_{\substack{n \in \Z^2 \\ t^{-\frac12} \le \jb n \le N}}   \frac{e^{ - 2 t \jb{n}^2 }}{ \jb{n}^2 } e_n (x) \\
& =: \III + \IV.
\end{split}
\label{alo2}
\end{align}

\noi
By writing the sum $\sum_{t^{-\frac12} \le \jb n \le N}$ as the difference $\sum_{\jb n \le N} - \sum_{\jb n < t^{-\frac12}}$ and using \eqref{bb1}, we deduce that

\noi
\begin{align}
\III \approx - \frac{1}{2 \pi} \log \Big( \frac{  |x| + N^{-1}   }{ |x| + t^{\frac12}}  \Big) \approx - \frac{1}{2 \pi} \log_- \Big( \frac{  |x| + N^{-1}   }{ |x| + t^{\frac12}}  \Big).
\label{alo3}
\end{align}

\noi
Here, we have replaced $\log$ by $\log_-$ by noting that $ \frac{  |x| + N^{-1}   }{ |x| + t^{\frac12}}  \ll 1 $ in the regime $t^{-\frac12} \ll N$. Lastly, we estimate $\IV$. Let $n = (n_1, n_2) \in \Z^2$. Let $C(n) \subset \R^2$ be the only square which has $n$ as a corner and lies inside the disc of radius $|n|$. For instance, if $n_1,n_2 >0$, then $C(n) = [n_1 -1, n_1] \times [n_2 - 1, n_2]$. Then, by the fact that the function $r \mapsto \frac{e^{-2t(1+r^2)}}{1+r^2}$ is non-increasing, that $|C(n)| =1$ and a change of variable, we get

\noi
\begin{align}
\begin{split}
|\IV| & = \Big|\sum_{\substack{n \in \Z^2 \\ t^{-\frac12} \le \jb n \le N}}  e_n (x)  \int_{C(n)} \frac{e^{ - 2 t \jb{n}^2 }}{ \jb{n}^2} dy \Big| \\
& \les \sum_{\substack{n \in \Z^2 \\ t^{-\frac12} \le \jb n \le N}}   \int_{C(n)} \frac{e^{ - 2 t \jb{y}^2 }}{ \jb{n}^2} dy \les \int_{t^{-\frac12} \les \jb x \les N}  \frac{e^{ - 2 t \jb{y}^2 }}{ \jb{y}^2 } dx \\
& \les e^{-2t } \int_{t^{-\frac12} \les r \les N} \frac{e^{ - 2 t r^2}}{ r } dr \les \int_{1 \les y \les t^{\frac12} N } \frac{e^{-s^2}}{s} ds \les 1,
\end{split}
\label{alo4}
\end{align}

\noi
uniformly in $N \ge 1$. By collecting \eqref{alo0}, \eqref{alo1}, \eqref{alo2}, \eqref{alo3} and \eqref{alo4}, we have

\noi
\begin{align*}
H_N(t,x) \approx - \frac{1}{2 \pi} \log_- \Big( \frac{  |x | + N^{-1}   }{ |x| + t^{\frac12}}  \Big),
\end{align*}

\noi
which also corresponds to \eqref{bb2b} in this case. 

We now prove \eqref{bb4}. The first bound in \eqref{bb4} is easily proved by arguing as in the above proof of \eqref{bb2}. We hence aim at proving 

\noi
\begin{align}
| (\P_{N_j}^2 H(t,x) -  \P_{N_2} \P_{N_1})(\P_N^2 H)(t,x)   | \les  N_1^{-4} |x|^{-4}, \label{bb5}
\end{align} 

\noi
uniformly in $N \ge 1$.

We fix $j = 2$ for convenience. By the Poisson formula \eqref{poisson}, we have that

\noi
\begin{align}
\begin{split}
& (\P_{N_j}^2 H(t,x) -  \P_{N_2} \P_{N_1})(\P_N H)(t,x) \\
& \qquad \qquad  = \frac{1}{2 \pi} \sum_{n \in \Z^2} (\chi_{N _2}^2(n) - \chi_{N_2}(n) \chi_{N_1}(n)) \chi_N^2(n) \frac{1 - e^{-2 t \jb n ^2}}{ \jb n ^2 } e_n (x) \\
& \qquad \qquad  = \frac{1}{2 \pi} \sum_{m \in \Z^2} f_{N_1, N_2}(t,x + 2\pi m),
\end{split}
\label{bb5b}
\end{align}

\noi
with 

\noi
\begin{align*}
\ft{f_{N_1,N_2}}(t,x) = \chi_N^2(n) (\chi_{N _2}^2(\xi) - \chi_{N_2}(\xi) \chi_{N_1}(\xi)) \frac{1 - e^{-2 t \jb \xi ^2}}{ \jb \xi ^2 }.
\end{align*}

\noi
Moreover, by integration by parts, we have

\noi
\begin{align}
\begin{split}
|f_{N_1, N_2}(t,x)| & \sim | x |^{-4} \Big| \int_{\R^2} e^{i x \cdot \xi} \Dl^2_{\xi} \Big( \chi_N^2(n) (\chi_{N _2}^2(\xi) - \chi_{N_2}(\xi) \chi_{N_1}(\xi)) \frac{1 - e^{-2 t \jb \xi ^2}}{ \jb \xi ^2 } \Big) d \xi \Big| \\
& \les |x|^{-4} N_1^{-4}. 
\end{split}
\label{bb6}
\end{align}

\noi
In the above, we used the fact that the integrand in the \eqref{bb6} is supported in the set $\{ \xi \in \R^2: N_1 \les |\xi| \les N_2 \}$ and the bound $e^{-2t \jb \xi ^2} \les_{p} (t \jb \xi ^2)^{-p}$ to prove that the contribution of derivatives of order $p$ hitting the factor $1 - e^{-2 t \jb \xi ^2}$ is bounded by $N_1^{-p}$ for any $p \in \N$.

Summing in $m \in \Z^2$ for $x \in \T^2\setminus \{0\} = (-\pi, \pi] \setminus \{0\}$ in \eqref{bb5b} together with \eqref{bb6} yields the desired bound \eqref{bb5}.
\end{proof}

%

We first record the following estimates on $\G_{\eps,N}$ as in \eqref{covfun}.

 \noi
 \begin{lemma}\label{LEM:cov}
Fix $\eps \in [0,1]$. We have the following bounds on the covariance function \eqref{covfun}:

\smallskip

\noi
\textup{(i)} For any $N \in \N$, $0 \le t \le 1$ and $x,y \in \T^2$, we have

\noi
\begin{align*}
\Gamma_{0,N}(t,x-y) \approx - \frac{1}{2\pi} \log_- \Big( \frac{  |x - y| + N^{-1}   }{ |x-y| + t^{\frac12}}  \Big).
\end{align*}

\smallskip

\noi
\textup{(ii)} For any $N \le (2 \eps)^{-1}$, $0 \le t \le 1$ and $x,y \in \T^2$, we have

\noi
\begin{align*}
\hspace{1mm} \Gamma_{\eps,N}(t,x-y) \approx -\frac{1}{2\pi} \log_- \Big( \frac{  |x - y| + N^{-1}   }{ |x-y| + t^{\frac12}}  \Big).
\end{align*}

\smallskip

\noi
\textup{(iii)} For any $N > (2 \eps)^{-1}$, $0 \le t \le 1$ and $x,y \in \T^2$, we have

\noi
\begin{align*}
\hphantom{XXXX}  \Gamma_{\eps,N}(t,x-y) &  \approx -\frac{1}{2\pi} \log_- \Big( \frac{  |x - y| + 2 \eps   }{ |x-y| + t^{\frac12}}  \Big)  \\
& \qquad - \frac{1 - e^{- \frac{t}{\eps^2} } }{2\pi} \log_- \Big( \frac{  |x - y| + N^{-1}  }{ |x-y| + 2 \eps}  \Big).
\end{align*}

\smallskip

\noi
\textup{(iv)} For any $\eps > 0$, $0 \le t \le 1$ and $N \les \eps^{-1 + \ta}$, we have

\noi
\begin{align*}
\hspace{-10mm} \big| \Gamma_{\eps,N}(t,x-y) - \Gamma_{0,N}(t,x-y)  \big| \les \eps^{2\ta},
\end{align*}

\noi
where the bound is uniform in $0 \le t \le 1$ and $x,y \in \T^2$.
\end{lemma}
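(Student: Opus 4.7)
The starting point is a Fourier-side representation of $\Gamma_{\eps,N}$. Combining \eqref{sto1}, \eqref{convo}, \eqref{proj1b} with the It\^o isometry applied to the white noise $W$ in \eqref{W1} (using the reality condition $B_{-n}=\overline{B_n}$), one obtains, for $z=x-y\in\T^{2}$,
\begin{equation*}
\Gamma_{\eps,N}(t,z)\;=\;\frac{1}{(2\pi)^{2}}\sum_{n\in\Z^{2}}\chi_{N}(n)^{2}\,\big\|\eps^{-2}\ft{\D_{\eps}}(n,\cdot)\big\|_{L^{2}([0,t])}^{2}\,e^{in\cdot z},
\end{equation*}
with the convention $\eps^{-2}\ft{\D_{\eps}}(n,s)\big|_{\eps=0}:=e^{-s\jb n^{2}}$. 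The plan is to compute the time $L^{2}$-norm in closed form, split the sum at the transition $\jb n=(2\eps)^{-1}$, and then convert the resulting trigonometric sums into the claimed pointwise bounds via Lemma~\ref{LEM:green} and the identity $H=G-\widetilde{G}$, where $\widetilde{G}(t,x):=\frac{1}{(2\pi)^{2}}\sum_{n}\jb n^{-2}e^{-2t\jb n^{2}}e^{in\cdot x}$.

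For $\jb n<(2\eps)^{-1}$, substituting the sinh-representation \eqref{L1}--\eqref{L4} and using $\sinh^{2}=\tfrac12(\cosh(2\cdot)-1)$ together with the Vieta relations $\Lambda_{\eps}^{+}+\Lambda_{\eps}^{-}=-\eps^{-2}$ and $\Lambda_{\eps}^{+}\Lambda_{\eps}^{-}=\jb n^{2}\eps^{-2}$ coming from \eqref{ld1}, a direct integration yields
\begin{equation*}
\big\|\eps^{-2}\ft{\D_{\eps}}(n,\cdot)\big\|_{L^{2}([0,t])}^{2}=\frac{1}{1-4\jb n^{2}\eps^{2}}\Big[\tfrac{1-e^{2\Lambda_{\eps}^{+}(n)t}}{-2\Lambda_{\eps}^{+}(n)}+\tfrac{1-e^{2\Lambda_{\eps}^{-}(n)t}}{-2\Lambda_{\eps}^{-}(n)}-2\eps^{2}(1-e^{-t/\eps^{2}})\Big].
\end{equation*}
The Taylor expansion \eqref{ld4}--\eqref{taylor1} identifies the $\Lambda^{+}$-term with the heat covariance $\tfrac{1-e^{-2t\jb n^{2}}}{2\jb n^{2}}$ up to an $O(\jb n^{2}\eps^{2})$ multiplicative correction, while the $\Lambda^{-}$- and $\eps^{2}$-terms contribute summable remainders of order $O(\eps^{2}+\eps^{2}e^{-t/\eps^{2}})$. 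For $\jb n>(2\eps)^{-1}$, an analogous computation based on \eqref{L2} and $\sin^{2}=\tfrac12(1-\cos(2\cdot))$, combined with the identity $\zeta_{\eps}(n)^{2}+(2\eps^{2})^{-2}=\jb n^{2}\eps^{-2}$, produces
\begin{equation*}
\big\|\eps^{-2}\ft{\D_{\eps}}(n,\cdot)\big\|_{L^{2}([0,t])}^{2}=\frac{1-e^{-t/\eps^{2}}}{2\jb n^{2}}+O\big(\jb n^{-4}\eps^{-2}\big)+\text{bounded oscillating remainder}.
\end{equation*}

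Plugging these two expressions into the Fourier series and splitting at $\jb n=(2\eps)^{-1}$ reduces the proofs of (i)--(iii) to applying Lemma~\ref{LEM:green}(i) to the resulting smooth-cutoff kernels. In (ii), the restriction $N\le(2\eps)^{-1}$ confines the sum to the low-frequency regime and the leading order is exactly $\tfrac{1}{2}\P_{N}^{2}H(t,z)$; writing $H=G-\widetilde{G}$ and applying Lemma~\ref{LEM:green}(i) to each piece separately yields the claimed ratio of logarithms. Item (i) is the $\eps=0$ specialization of this computation. For (iii) I would split into a low-frequency block, whose leading order is $\tfrac{1}{2}\P_{(2\eps)^{-1}}^{2}H(t,z)$ and produces the first logarithm with length scale $\eps$ via the $G/\widetilde{G}$ decomposition, and a high-frequency block with leading order $\tfrac{1-e^{-t/\eps^{2}}}{2}\big(\P_{N}^{2}G(z)-\P_{(2\eps)^{-1}}^{2}G(z)\big)$, delivering the second logarithm via two more applications of Lemma~\ref{LEM:green}(i). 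Finally, for (iv) the hypothesis $N\lesssim\eps^{-1+\ta}$ keeps the entire support of $\chi_{N}^{2}$ in the low-frequency regime, so I would apply \eqref{mul3} from Lemma~\ref{LEM:F2} pointwise in $s$ and square-integrate, obtaining the Fourier-coefficient bound $\big|\,\|\eps^{-2}\ft{\D_{\eps}}(n,\cdot)\|_{L^{2}}^{2}-\tfrac{1-e^{-2t\jb n^{2}}}{2\jb n^{2}}\big|\lesssim \eps^{2\ta}\jb n^{-2}$; summing against $\chi_{N}(n)^{2}$ yields $|\Gamma_{\eps,N}-\Gamma_{0,N}|\lesssim \eps^{2\ta}\log N$, which is absorbed into the claimed $\eps^{2\ta}$ at the price of a slight decrease of $\ta$.

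The main obstacle I anticipate is the careful treatment of the transition annulus $\jb n\approx(2\eps)^{-1}$, where both $\lambda_{\eps}(n)$ and $\zeta_{\eps}(n)$ vanish and the closed-form sinh/sin expressions degenerate: the prefactor $(1-4\jb n^{2}\eps^{2})^{-1}$ blows up while the bracketed quantities vanish at the same rate, producing delicate $0/0$-type cancellations. The saving grace is that $(\eps,t,\eta)\mapsto\ft{\D_{\eps}}(\eta,t)$ is smooth across the threshold by Lemma~\ref{LEM:F3}, and Lemma~\ref{LEM:F1} provides the uniform decay $|\eps^{-2}\ft{\D_{\eps}}(n,t)|\lesssim e^{-\ta t\jb n^{2}}$ on the enlarged set $\jb n\le(1+\ta)(2\eps)^{-1}$, so that the transition band contributes at most the same order as a thin-annulus heat-kernel contribution and can be absorbed into the $\approx$-equivalence without perturbing the logarithmic leading terms.
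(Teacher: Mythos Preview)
Your proposal is correct and follows essentially the same route as the paper: explicit computation of the time-$L^{2}$ norm of $\eps^{-2}\ft{\D_\eps}(n,\cdot)$, splitting at $\jb n=(2\eps)^{-1}$, Taylor expansion on the low-frequency side, extraction of the $(1-e^{-t/\eps^{2}})G$ leading term on the high-frequency side, a direct $O(1)$ bound on the transition annulus, and conversion via Lemma~\ref{LEM:green}. For (iv) the paper avoids your $\log N$ loss by reusing the explicit expansion bounds established in the proof of (ii) with scale parameter $\Lambda=\eps^{\ta}$ rather than invoking \eqref{mul3}, but your route is equally valid after the slight decrease of $\ta$ you already note.
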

 
 \begin{remark}\rm
 In \cite{HS} and \cite{ORSW1, ORSW2}, the authors (essentially) consider the following variants of the stochastic convolutions \eqref{sto1} and \eqref{sto2}:
 
 \noi
 \begin{align*}
 \wt \Psi_{1}(t) = \sqrt{2} \int_{-\infty}^t \D_1(t -t') dW(t'),
 \end{align*}
 
 \noi
 and
 
 \noi
 \begin{align*}
 \wt \Psi_{0}(t) = \sqrt{2} \int_{-\infty}^t P_0(t-t') dW(t').
 \end{align*} 
 
 \noi
 These choices of stochastic convolutions lead to covariant estimates which are slightly different from those in Lemma \ref{LEM:cov}. Namely, they prove in \cite[Lemma 3.9]{HS} and \cite[Lemma 2.7]{ORSW1} that
 
 \noi
 \begin{align*}
 \E [ \wt \Psi_{\eps}(t,x)  \wt \Psi_{\eps}(t,y) ] \approx - \frac{1}{2\pi} \log( |x-y| ),
 \end{align*}
 
 \noi
 for $\eps \in \{0,1\}$.
 \end{remark} 
  
 \begin{proof} Throughout this proof, we fix $0 \le t \le 1$ and $x, y \in \T^2$. By \eqref{covfun} and \eqref{convo10}, we have that
 
 \noi
 \begin{align}
 \Gamma_{0,N}(t,x-y) = \frac{1}{2 \pi} \sum_{n \in \Z^2} \chi_N(n)^2 \frac{1 - e^{- 2 t \jb n ^2} }{\jb n ^2} e_n(x- y) = \P_N ^2H(t,x-y),
 \label{cov50}
 \end{align}
 
 \noi
 where $H(t,x)$ is as in \eqref{heatgreen}. Hence, (i) follows immediately from \eqref{bb2} in Lemma \ref{LEM:green} (i).
 
We now turn to the proof of (ii). Let us fix an absolute constant $0 < c \ll 1$.

\smallskip

\noi
{\bf $\bul$ Case 1: $N < c (2 \eps)^{-1}$.} In this case, we keep track in the dependence on the constant $c$ of our estimates as this will be useful for the proof of item (iv) below. By \eqref{covfun}, \eqref{convo10} with \eqref{L3} and \eqref{L4}, we have that 
 
\noi
\begin{align}
\begin{split}
 \Gamma_{\eps,N} (t,x-y) & =  \frac{1}{\pi}\sum_{n \in \Z^2} \chi_N(n)^2 \eps^{-4} \int_0 ^t e^{- \frac{t - t'}{\eps^2} } \frac{ \sinh ( (t-t') \ld_\eps(n))^2 }{ \ld_\eps(n) ^2} dt' e_n (x-y) \label{cov1}
  \\
& =  \frac{1}{2\pi} \sum_{n \in \Z^2}  \chi_N(n)^2 \eps^{-4} \int_0 ^t e^{- \frac{t - t'}{\eps^2} } \frac{ \cosh( 2 (t-t') \ld_\eps(n))}{ \ld_\eps(n) ^2} dt' e_n (x-y)  \\
& \quad -   \frac{1}{2\pi} \sum_{n \in \Z^2} \chi_N(n)^2 \eps^{-2} \frac{ 1 - e^{- \frac{t}{\eps^2} } }{ \ld_\eps(n) ^2} e_n (x-y)  \\
& = \1 - \II.
\end{split}
 \end{align}

\noi
Note that in the above, we used the fact that $\chi_N(n) = 0$ if $\jb n > (2\eps)^{-1}$ (provided the constant $c$ is chosen small enough) so that $\P_N^2 S_\eps(t) =  \P_N^2 \frac{\sinh (\ld_\eps(\nb) t) }{\ld_\eps(\nb)}$ for $N < c (2 \eps)^{-1}$.
 
Since we have that 

\noi
\begin{align}
\ld_\eps (n) \sim \eps^{-2},
\label{condi1}
\end{align}

\noi
for $\jb n \le 10 c (2 \eps)^{-1}  $, we get 
 
 \noi
 \begin{align}
  |\II | \les   N^2 \eps^2 \les c^2.
  \label{cov2}
 \end{align}
 
On the other hand, further expanding $\1$ using the formula \eqref{ld2} for $\ld_\eps(n)$ gives
 
 \begin{align}
 \1 & =  \frac{\eps^{-4}}{4 \pi}  \sum_{n \in \Z^2}  \chi_N(n)^2 \frac{1 - e^{ - ( \eps^{-2} - 2 \ld_\eps(n) ) t }}{ \ld_\eps(n)^2 ( \eps^{-2} - 2 \ld_\eps(n) )}e_n (x-y) \nonumber \\
 &  \qquad + \frac{\eps^{-4}}{4 \pi}  \sum_{n \in \Z^2} \chi_N(n)^2 \frac{1 - e^{ - ( \eps^{-2} + 2 \ld_\eps(n) ) t }}{ \ld_\eps(n)^2 ( \eps^{-2} + 2 \ld_\eps(n) )} e_n (x-y) =: \III + \IV \label{cov3}
 \end{align}
 
 \noi
 As in the estimate on $\II$, we also have 
 
 \noi
\begin{align}
| \IV | \les N^2 \eps^2 \les c^2.
\label{cov4}
\end{align}

We now compare $\III$ to $\Gamma_{0,N}(t,x-y)$. By using the Taylor expansion \eqref{taylor1} and the condition $\jb n \ll \eps^{-1}$, we get

\noi
\begin{align}
\eps^{-2} - 2 \ld_\eps(n) = 2 \jb{n}^2 \big(1 + O ( \jb{n}^2 \eps^2 ) \big),
\label{condi4}
\end{align}

\noi
and in particular, it holds that

\noi
\begin{align}
\eps^{-2} - 2 \ld_\eps(n) \sim \jb n^2 
\label{condi2}
\end{align}

\noi
Thus, by the mean value theorem, we have

\noi
\begin{align}
\big|e^{-(\eps^{-2} - 2 \ld_\eps(n))t} - e^{-2 \jb n ^2 t} \big| \les t \jb n ^4 \eps^2 e^{-c' \jb n ^2 t} \les \jb n ^2 \eps^2,
\label{condi3}
\end{align}

\noi
for some constant $c' >0$. In view of the above and \eqref{cov50}, we have that

\noi
\begin{align}
\begin{split}
&  | \III - \Gamma_{0,N}(t,x-y) | \\
 &  = \Big| \sum_{n \in \Z^2}  \chi_N(n)^2 e_n(x-y) \Big( \frac{\eps^{-4}}{4 \pi} \frac{1 - e^{ - ( \eps^{-2} - 2 \ld_\eps(n) ) t }}{ \ld_\eps(n)^2 ( \eps^{-2} - 2 \ld_\eps(n) )} - \frac{1 - e^{- 2 t \jb n ^2} }{2\pi \jb n ^2} \Big) \Big| \\
 &  \le \frac{\eps^{-4}}{4 \pi}  \sum_{n \in \Z^2}  \chi_N(n)^2 \frac{\big|e^{-2t \jb n ^2} - e^{ - ( \eps^{-2} - 2 \ld_\eps(n) ) t }\big|}{ \ld_\eps(n)^2 ( \eps^{-2} - 2 \ld_\eps(n) )} \\
 & \qquad \quad + \frac{\eps^{-4}}{4 \pi}  \sum_{n \in \Z^2}  \chi_N(n)^2 \frac{1 - e^{-2t \jb n ^2}}{ \ld_\eps(n)^2} \Big| \frac{1}{( \eps^{-2} - 2 \ld_\eps(n) )} - \frac{1}{2\jb n^2}\Big| \\
 & \qquad \quad + \frac{\eps^{-4}}{4 \pi}  \sum_{n \in \Z^2}  \chi_N(n)^2 \frac{1- e^{-2t \jb n ^2}}{ 2 \jb n ^2 } \Big| \frac{1}{\ld_\eps(n)^2} - \frac{4}{\eps^{-4}}\Big| =: A + B + C.
\end{split}
\label{cov5}
\end{align}

\noi
By \eqref{condi1}, \eqref{condi4}, \eqref{condi2}, \eqref{condi3}, we have that

\noi
\begin{align}
A + B \les \eps^2 N^2 \les c^2.
\label{XXX1}
\end{align}

\noi
As for the term $C$, we simply note that

\noi
\begin{align*}
\Big| \frac{1}{\ld_\eps(n)^2} - \frac{4}{\eps^{-4}}\Big| = \frac{|\eps^{-4} - (2 \ld_\eps(n))^2|}{\ld_\eps(n)^2 \eps^{-4}} \les  \frac{|\eps^{-2} - 2 \ld_\eps(n)| \cdot \eps^{-2} }{\ld_\eps(n)^2 \eps^{-4}} \ll \jb n^2 \eps^{6},
\end{align*}

\noi
where we used \eqref{condi1} and \eqref{condi2} with $\jb n \ll \eps^{-1}$ in the above. Hence, this easily yields

\noi
\begin{align}
C \les \eps^2 N^2 \les c^2.
\label{XXX2} 
\end{align}

\noi
Thus, by \eqref{cov5}, \eqref{XXX1} and \eqref{XXX2}, we deduce that

\noi
\begin{align}
 | \III - \Gamma_{0,N}(t,x-y) | \les c^2.
 \label{XXX3}
\end{align}

Putting \eqref{cov1}, \eqref{cov2}, \eqref{cov3}, \eqref{cov4}, \eqref{XXX3} and (i) together gives (ii) for ${1 \le N \le c (2\eps)^{-1}}$.

\smallskip

\noi
{\bf $\bul$ Case 2: $c (2 \eps)^{-1} \le N \le (2 \eps)^{-1}$.} We show that the contribution of the frequencies ${c (2\eps)^{-1}  < \jb n  \le c^{-1} (2\eps)^{-1}}$ to $\Gamma_{\eps,N}(t,x-y)$\footnote{Here, we make use of the fact that $\chi_N(n) = 0$ if $\jb n > c^{-1} (2 \eps)^{-1}$ for $c$ small enough.} is bounded by $O(1)$; which concludes the proof of (ii) since the contribution of the modes $ \jb n  < c (2\eps)^{-1}$ can be estimated as in the proof of Case 1. More precisely, we prove that

\noi
\begin{align}
& \Big| \frac{\eps^{-4}}{2 \pi} \sum_{\substack{ n \in \Z^2 \\ c (2\eps)^{-1}  \le \jb{n} \leq (2 \eps)^{-1}}} \chi_N(n)^2 \int_0 ^t e^{- \frac{t - t'}{\eps^2} } \frac{ \sinh ( (t-t') \ld_\eps(n))^2 }{ \ld_\eps(n) ^2} dt' e_n (x-y) \Big| \les 1,
\label{cov9} \\
& \Big| \frac{\eps^{-4}}{2 \pi} \sum_{\substack{ n \in \Z^2 \\ (2\eps)^{-1}  < \jb{n} \le c^{-1} (2 \eps)^{-1}}} \chi_N(n)^2\int_0 ^t e^{- \frac{t - t'}{\eps^2} } \frac{ \sin ( (t-t') \ld_\eps(n))^2 }{ \ld_\eps(n) ^2} dt' e_n (x-y) \Big| \les 1.
\label{XXX4}
\end{align}

\noi
In what follows, we only show \eqref{cov9} as \eqref{XXX4} follows from similar arguments. We divide the proof of the bound \eqref{cov9} in two (sub-)cases as in the proof as in the proof of Lemma \ref{LEM:F1}.

\smallskip

\noi
$\bullet$
{\bf Subcase 1:} $\ld_\eps(n)(t-t') \le 1$.
\quad
In this case, using the inequalities $|\sinh(x) | \les |x|$ for $|x| \les $ 1 and $e^{-y} \les y^{-2}$ for $y > 0$, we get

\noi
\begin{align*}
& \Big| \frac{\eps^{-4}}{2 \pi} \sum_{\substack{ n \in \Z^2 \\ c (2\eps)^{-1}  \le \jb{n} \leq (2 \eps)^{-1}}} \chi_N(n)^2 \int_0 ^t e^{- \frac{t - t'}{\eps^2} } \frac{ \sinh ( (t-t') \ld_\eps(n))^2 }{ \ld_\eps(n) ^2} dt' e_n (x-y) \Big| \\
& \qquad \quad \les \sum_{\substack{ n \in \Z^2 \\ c (2\eps)^{-1}  \le \jb{n} \leq (2 \eps)^{-1}}}  \chi_N(n)^2\eps^{-4} \int_0 ^t e^{- \frac{t - t'}{\eps^2} } (t-t')^2 dt'  \\
& \qquad \quad \les \sum_{\substack{ n \in \Z^2 \\ c (2\eps)^{-1}  \le \jb{n} \leq (2 \eps)^{-1}}}  \chi_N(n)^2 \int_0 ^t e^{- \frac{t - t'}{2 \eps^2} } dt'  \les \sum_{\substack{ n \in \Z^2 \\ c (2\eps)^{-1}  \le \jb{n} \leq (2 \eps)^{-1}}} \chi_N(n)^2 \eps^2 \les N^2 \eps^2 \les 1.
\end{align*}

\noi
$\bullet$
{\bf Subcase 2:} $\ld_\eps(n)(t-t') > 1$.
\quad
Fix $0 < \g \ll c$. We have the following inequality

\noi
\begin{align*}
\sqrt{1 - x} \le 1 - \g ( 1 + \frac{x}{2} )
\end{align*}

\noi
for $c \le x \le 1$. This entails the following bound

\noi
\begin{align}
e^{-(1 - \g) \frac{t}{2 \eps^2}} \sinh( t \ld_\eps(n) ) \les e^{- \g t \jb n ^2}
\label{cov89}
\end{align}

\noi
Hence, as in the previous case and using the inequalities $\ld_\eps(n)(t-t') > 1$, \eqref{cov89} and $e^{-y} \les y^{-2}$, for $y >0$, we have

\noi
\begin{align*}
& \Big| \frac{\eps^{-4}}{2 \pi} \sum_{\substack{ n \in \Z^2 \\ c (2\eps)^{-1}  \le \jb{n} \leq (2 \eps)^{-1}}} \chi_N(n)^2 \int_0 ^t e^{- \frac{t - t'}{\eps^2} } \frac{ \sinh ( (t-t') \ld_\eps(n))^2 }{ \ld_\eps(n) ^2} dt' e_n (x-y) \Big| \\
& \qquad \quad \les \sum_{\substack{ n \in \Z^2 \\ c (2\eps)^{-1}  \le \jb{n} \leq (2 \eps)^{-1}}} \chi_N(n)^2  \eps^{-4} \int_0 ^t (t-t')^2 e^{- \frac{(t - t') \g}{\eps^2} } \big(  e^{- \frac{(t - t')(1- \g)}{\eps^2} }  \sinh ( (t-t') \ld_\eps(n) ) \big)^2 dt'  \\ 
& \qquad \quad \les \sum_{\substack{ n \in \Z^2 \\ c (2\eps)^{-1}  \le \jb{n} \leq (2 \eps)^{-1}}} \chi_N(n)^2  \eps^{-4} \int_0 ^t (t-t')^2 e^{- \frac{(t - t') \g}{\eps^2} } e^{-\g (t-t') \jb n^2} dt' \\ 
& \qquad \quad \les \sum_{\substack{ n \in \Z^2 \\ c (2\eps)^{-1}  \le \jb{n} \leq (2 \eps)^{-1}}} \chi_N(n)^2  \frac{\eps^{-4}}{\jb n^4} \int_0 ^t  e^{- \frac{(t - t') \g}{\eps^2} } dt' \\ 
& \qquad \quad \les \sum_{\substack{ n \in \Z^2 \\ c (2\eps)^{-1}  \le \jb{n} \leq (2 \eps)^{-1}}}  \chi_N(n)^2 \int_0 ^t  e^{- \frac{(t - t') \g}{2 \eps^2} }  dt' \les N^2\eps^2 \les 1.
\end{align*}

\noi
This shows \eqref{cov9} and concludes the proof of (ii).

We now prove (iii). Let us fix $0 < c \ll 1$. We have that

\noi
\begin{align}
\begin{split}
 \Gamma_{\eps,N} (t,x-y) & =  \frac{1}{\pi}\sum_{n \in \Z^2} \chi_N(n)^2 \eps^{-4} \int_0 ^t e^{- \frac{t - t'}{\eps^2} } \ft S_\eps (t-t',n)^2 dt' e_n (x-y) \label{XXX10}
  \\
& =  \frac{\eps^{-4}}{\pi} \sum_{\substack{n \in \Z^2 \\ \jb n < c^{-1} (2\eps)^{-1}}}  \chi_N(n)^2 \eps^{-4} \int_0 ^t e^{- \frac{t - t'}{\eps^2} } \ft S_\eps(t-t',n)^2 dt' e_n (x-y)  \\
& \quad  +  \frac{\eps^{-4}}{\pi} \sum_{\substack{n \in \Z^2 \\  \jb n \ge c^{-1}(2\eps)^{-1}}}  \chi_N(n)^2 \int_0 ^t e^{- \frac{t - t'}{\eps^2} } \frac{\sin(\ze_\eps(n) (t-t'))^2}{\ze_\eps(n)^2} dt' e_n (x-y)  \\
& =: \1 + \II.
\end{split}
\end{align}

\noi
Here, $S_\eps(t)$ and $\ze_\eps(n)$ are as in \eqref{L3} and \eqref{ld3}, respectively and $\ft S_\eps(t,n)$ denotes the Fourier symbol of $S_\eps(t)$. 

By arguing as in the proof of (ii) above, we have
\noi
\begin{align}
\1 \approx -\frac{1}{2\pi} \log_- \Big( \frac{  |x - y| + 2\eps   }{ |x-y| + t^{\frac12}}  \Big).
\label{XXX11}
\end{align}

\noi
as $\1$ essentially corresponds to $\Gamma_{\eps, c^{-1}(2\eps)}(t,x)$. More precisely, the contribution of the modes $\jb n < c (2\eps)^{-1}$ and $ c (2\eps)^{-1} \le \jb n < c^{-1} (2\eps)^{-1}$ to $\1$ may be estimated as in Case 1 and Case 2 in the proof of (ii), respectively.

We now bound $\II$. By using trigonometric formulas and integrating in time, we get 

\noi
\begin{align}
\begin{split}
\II & = \frac{\eps^{-4}}{\pi} \sum_{\substack{n \in \Z^2 \\  \jb n \ge c^{-1}(2\eps)^{-1}}}  \chi_N(n)^2 \int_0 ^t e^{- \frac{t - t'}{\eps^2} } \frac{\sin(\ze_\eps(n) (t-t'))^2}{\ze_\eps(n)^2} dt' e_n (x-y) \\
& = \frac{\eps^{-4}}{2\pi} \sum_{\substack{n \in \Z^2 \\  \jb n \ge c^{-1}(2\eps)^{-1}}}  \frac{\chi_N(n)^2}{\ze_\eps(n)^2}  e_n (x-y) \Big( \eps^2 (1-e^{-\frac{t}{\eps^2}}) \\
& \qquad \qquad \qquad \qquad \qquad \qquad - \int_0 ^t e^{- \frac{t - t'}{\eps^2} } \cos(2\ze_\eps(n) (t-t')) dt' \Big) \\
& = \frac{\eps^{-2}}{2\pi} (1-e^{-\frac{t}{\eps^2}})  \sum_{\substack{n \in \Z^2 \\  \jb n \ge c^{-1}(2\eps)^{-1}}}  \frac{\chi_N(n)^2}{\ze_\eps(n)^2}  e_n (x-y)  \\
& \qquad \qquad \qquad \qquad  - \frac{\eps^{-4}}{2\pi} \Im \sum_{\substack{n \in \Z^2 \\  \jb n \ge c^{-1}(2\eps)^{-1}}}  \frac{\chi_N(n)^2(1- e^{- \frac{t}{\eps^2} + i \ze_\eps(n) t)} )}{\ze_\eps(n)^2(\eps^{-2} - i \ze_\eps(n))} e_n (x-y) \\
& =: \III - \IV.
\label{XXX12}
\end{split}
\end{align}

\noi
Note that by \eqref{ld3}, it holds that 

\noi
\begin{align*}
\ze_\eps(n) \sim \frac{\jb n}{\eps} \gg \eps^{-2},
\end{align*}

\noi
when $\jb n \ge c^{-1}(2\eps)^{-1}$. Hence, we have

\noi
\begin{align}
|\IV| \les \eps^{-1} \sum_{\substack{n \in \Z^2 \\ c^{-1}(2\eps)^{-1} \le \jb n \les N }} \jb n ^{-3} \les c.
\label{XXX13}
\end{align}

On the other hand, by arguing as in \eqref{cov5}-\eqref{XXX3} in Case 1 in the above proof of (ii), one can show that

\noi
\begin{align}
| \III - (1- e^{-\frac{t}{\eps^2}}) (\P_N ^2 - \P_{(2\eps)^{-1}}^2) G(t,x-y) | \les c,
\label{XXX14}
\end{align}

\noi
where $G$ is as in \eqref{besselG}. Thus, (iii) follows from the estimate \eqref{bb1} in Lemma \ref{LEM:green} together \eqref{XXX10}, \eqref{XXX11}, \eqref{XXX12}, \eqref{XXX14} and \eqref{XXX14}, 

Lastly, we briefly discuss the proof of item (iv). For $\eps \sim 1$, then (iv) is immediate in view of (i) and (ii) since $N \les 1$ in this case. Otherwise, for $0 < \eps \ll 1$, the estimate (iv) is a consequence of the bounds \eqref{cov2}, \eqref{cov4} and \eqref{XXX3} with $c = \eps^{\ta} \ll 1$ for $0 < \ta \ll 1$.
 \end{proof}

\begin{lemma}\label{LEM:cov2}
 Let $1 \le N_1 \le N_2$. We have the following bound:
 
 \noi
 \begin{align}
 | \P_{N_j}^2 \Gamma_{\eps,N} (t,x) - \P_{N_1} \P_{N_2} \Gamma_{\eps,N}(t,x)  | \les  N_1^{-\dl} |x|^{- 2 \dl},
 \label{fun1a}
 \end{align}
 
 \noi
uniformly in $N \ge 1$ and for any $\eps \in [0,1]$, $0 \le t \le 1$, $x \in \T^2 \setminus \{ 0 \}$, $j = 1,2$ and $\dl > 0$ small enough.
 \end{lemma}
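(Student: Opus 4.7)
The plan is to prove the bound via Fourier analysis combined with a case analysis on $|x|$ versus $N_1^{-1}$, mimicking the strategy used for the Green function $G$ in \cite[Lemma 2.3]{ORSW1} but adapted to the $\eps$-dependent covariance $\Gamma_\eps$. I would start from the Fourier series representation
\begin{align*}
(\P_{N_j}^2 - \P_{N_1}\P_{N_2})\Gamma_\eps(t, x) = \sum_{n \in \Z^2} m_j(n)\,\ft{\Gamma_\eps}(n, t)\,e_n(x),
\end{align*}
where $m_j(n) := \chi_{N_j}^2(n) - \chi_{N_1}(n)\chi_{N_2}(n)$ is uniformly bounded and vanishes for $\jb{n} \lesssim N_1$, while the bound $|\ft{\Gamma_\eps}(n, t)| \lesssim \jb{n}^{-2}$ uniform in $\eps, t \ge 0$ is an immediate consequence of the computation leading to \eqref{S3}. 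Setting $K_j := \F^{-1}(m_j)$, a dyadic decomposition of $m_j$ into Littlewood--Paley pieces at scales $\gtrsim N_1^{-1}$ would yield the structural properties $\int K_j = 0$, $\int |y|\,|K_j(y)|\,dy \lesssim N_1^{-1}$, and the rapid decay $|K_j(y)| \lesssim N_1^2(1 + N_1|y|)^{-L}$ away from $y = 0$ for any $L \ge 0$.

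To obtain the refined bound in the regime $|x| \gtrsim N_1^{-1}$, I would use the mean-zero property to write
\begin{align*}
(\P_{N_j}^2 - \P_{N_1}\P_{N_2})\Gamma_\eps(t, x) = \int_{\T^2} K_j(y)\bigl[\Gamma_\eps(t, x-y) - \Gamma_\eps(t, x)\bigr]\,dy,
\end{align*}
and split the integral at $|y| = |x|/2$. In the near region, I would combine the uniform-in-$(\eps, t)$ Lipschitz-type bound $|\Gamma_\eps(t, x-y) - \Gamma_\eps(t, x)| \lesssim |y|/|x|$ (which follows from Lemma \ref{LEM:cov}, as the $-\log|x|$-coefficient in $\Gamma_\eps$ is uniformly bounded by $1/(4\pi)$ thanks to the factor $1 - e^{-t/\eps^2} \le 1$) with the first-moment bound on $K_j$ to obtain a contribution $\lesssim N_1^{-1}|x|^{-1}$. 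In the far region, the rapid decay of $K_j$ for $L$ large combined with the log bound $|\Gamma_\eps(t, x)| \lesssim 1 + |\log|x||$ (itself a consequence of Lemma \ref{LEM:cov}) would yield an acceptable contribution whenever $|x| \gtrsim N_1^{-1}$. For $|x| < N_1^{-1}$, I would instead invoke the cruder bound $|(\P_{N_j}^2 - \P_{N_1}\P_{N_2})\Gamma_\eps(t, x)| \lesssim 1 + |\log(N_1|x|)|$, obtained by a direct dyadic estimate on the Fourier side.

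Combining the two estimates would finish the proof: in the regime $|x| \ge N_1^{-1}$, the refined bound gives $(N_1|x|)^{-1} \le (N_1|x|)^{-\dl} \le N_1^{-\dl}|x|^{-2\dl}$ (using $|x|^\dl \lesssim 1$ on $\T^2$); in the regime $|x| < N_1^{-1}$, the crude bound together with $|\log(N_1|x|)| \lesssim_\dl (N_1|x|)^{-\dl}$ and the elementary inequality $|x|^\dl \le N_1^{-\dl}$ yields $\lesssim N_1^{-\dl}|x|^{-2\dl}$. Taking $\dl > 0$ sufficiently small then concludes. The main technical obstacle, I expect, is rigorously justifying the uniform-in-$(\eps, t)$ Lipschitz-type estimate on $\Gamma_\eps$; while Lemma \ref{LEM:cov}(iii) strongly suggests it (with leading $-\log|x|$ contribution having bounded coefficient), extracting a pointwise bound of the form $|\nabla_x \Gamma_\eps(t, x)| \lesssim |x|^{-1}$ uniformly would require a careful analysis of the regularity of the error terms in the asymptotic expansions of the covariance, particularly at the transition between the heat-like regime $\jb{n} \lesssim (2\eps)^{-1}$ and the wave-like regime $\jb{n} \gtrsim (2\eps)^{-1}$.
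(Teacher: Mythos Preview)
Your approach differs substantially from the paper's. You aim for a direct physical-space argument: write the difference as convolution with a kernel $K_j = \F^{-1}(m_j)$, exploit mean-zero and first-moment control on $K_j$ in the near region $|y| < |x|/2$, and rapid decay in the far region. The linchpin is the uniform Lipschitz-type bound $|\Gamma_\eps(t,x-y) - \Gamma_\eps(t,x)| \lesssim |y|/|x|$, which you correctly flag as the main obstacle. The difficulty is real: Lemma~\ref{LEM:cov} only determines $\Gamma_{\eps,N}$ up to additive constants (the $\approx$ relation), and those $O(1)$ remainders carry no a~priori gradient control. In particular, the contribution from the transition zone $\jb{n} \sim (2\eps)^{-1}$ produces terms whose $x$-derivatives are not obviously bounded by $|x|^{-1}$ uniformly in $\eps$. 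Establishing this would force you back into a Fourier-side computation of essentially the same difficulty as the lemma itself.

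The paper sidesteps any Lipschitz bound on $\Gamma_\eps$ altogether. It splits into three frequency regimes according to the position of $N_1,N_2$ relative to $\eps^{-1}$: (i) both $\lesssim \eps^{-1+\ta}$, (ii) both in the narrow window $\eps^{-1\pm\ta}$, (iii) both $\gtrsim \eps^{-1-\ta}$, with a telescoping identity \eqref{fun88} handling the mixed cases. In regimes (i) and (iii), $\Gamma_\eps$ is approximated by the known functions $H$ and $(1-e^{-t/\eps^2})G$ respectively (via Lemma~\ref{LEM:cov}(iv) and its high-frequency analogue), so \eqref{fun1a} is inherited from Lemma~\ref{LEM:green}(ii). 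Only in the single-scale transition regime (ii), where $N_1 \sim N_2 \sim \eps^{-1}$, does one compute directly: Poisson summation followed by integration by parts, feeding in the derivative bounds on $\ft{\D_\eps}$ from Lemma~\ref{LEM:F3}, yields $|x|^{-2m}N_1^{-2m+c\ta}$; interpolating this with the log bound from Lemma~\ref{LEM:cov} gives \eqref{fun1a}.

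What each approach buys: the paper's route recycles the two-point bounds for $G$ and $H$ rather than re-deriving them for $\Gamma_\eps$, and confines the genuinely new work to a frequency band where the multiplier is essentially single-scale (so Poisson summation is efficient). Your route, if the Lipschitz bound were available, would be cleaner and closer to the classical singular-integral template --- but as written it defers the hard $\eps$-uniform estimate to an input that the existing lemmas do not supply. A minor point: your crude bound in the regime $|x| < N_1^{-1}$ should read $1 + |\log|x||$ rather than $1 + |\log(N_1|x|)|$ (the difference can be as large as $\log N_2$ when $j=2$), but the final interpolation still closes since $|x|^{-\dl} \le N_1^{-\dl}|x|^{-2\dl}$ holds in that regime.
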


 \begin{proof}
Fix $j \in \{1,2\}$, $0 \le t \le 1$ and $x \in \T^2 \setminus \{0\}$. The bound \eqref{fun1a} is a consequence of \eqref{cov50} and the bound \eqref{bb4} on $H$ in Lemma \ref{LEM:green}. Indeed, by interpolation and the inequality $| \log_-( y ) | \les_\dl y^{-\dl}$ for any $0 < y \les 1 $ and $\dl >0$, we have that

\noi
\begin{align}
\begin{split}
 | (\P_{N_j}^2 H(t,x) - \P_{N_1} \P_{N_2})(\P_N^2H)(t,x)  | & \les \Big( 1- \log_- \Big( \frac{  |x| + N_2^{-1}   }{ |x| + t^{\frac12}}  \Big) \Big)^{1- \dl} (N_1 ^{-1} |x|^{-1} )^\dl \\
 & \les  \Big( 1 +\Big( \frac{  |x| + t^{\frac12}  }{ |x| + N_2^{-1}  }  \Big)^{-\dl} \Big)^{1- \dl} (N_1 ^{-1} |x|^{-1} )^\dl \\
 & \les N_1^{-\dl} |x|^{-2 \dl}.
 \end{split}
 \label{XXX15}
\end{align} 

Fix $0 < \eps \le 1$. It suffices to prove the bound \eqref{fun1a} in the following four cases:

\begin{itemize}
\item[{\bf (a)}] $0< \eps \le 10^{-\frac{10}{\ta}}$ and $1 \le N_1 , N_2 \le 10^3 \eps^{-1 + \ta}$,

\smallskip

\item[{\bf (b)}] $0< \eps \le 10^{-\frac{10}{\ta}}$ and $N_1, N_2 \ge 10^{-3} \eps^{-1 - \ta}$,

\smallskip

\item[{\bf (c)}] $0< \eps \le 10^{-\frac{10}{\ta}}$ and $10^{-3}  \eps^{-1 + \ta} \le N_1 , N_2 \le 10^3 \eps^{-1 - \ta}$.

\smallskip

\item[{\bf (d)}] $10^{-\frac{10}{\ta}}< \eps \le 1$.
\end{itemize}

\noi
Let us now argue that proving \eqref{fun1a} in the above four cases allows us to obtain \eqref{fun1a} in full generality. For instance, let us assume that $0<\eps \le 10^{-\frac{10}{\ta}}$ and that we have $1 \le N_1 \le 10^{-3} \eps^{-1 + \ta} < 10^{3} \eps^{-1+\ta} < N_2 \le 10^3 \eps^{-1-\ta}$ so that the couple $(N_1,N_2)$ does not belong to the cases (a), (b), (c) or (d). Then, pick $M = \eps^{-1+\ta}$ such that $(N_1,M)$ belongs to Case (a) while $(M,N_2)$ belongs to Case (c). Note that by assumption, the Fourier support of the symbols associated with $\P_{N_1}$, $\P_M$ and $\P_{N_2}$ are included in each other (in that order). This leads to the following identity:

\noi
\begin{align}
\begin{split}
\P_{N_2} - \P_{N_1} \P_{N_2} & = \P_{N_2} - \P_{N_1} = (\P_{N_2} - \P_M) + (\P_M - \P_{N_1}) \\
& = (\P_{N_2} - \P_M \P_{N_2} ) + (\P_M - \P_{N_1} \P_M ).
\end{split}
\label{fun88}
\end{align}

\noi
Hence, from \eqref{fun88} and Case (a) and Case (c), we get (say for $j=2$ in \eqref{fun1a})

\noi
\begin{align*}
 |\P_{N_j}^2 \Gamma_{\eps,N} (t,x) - \P_{N_1} \P_{N_2} \Gamma_{\eps,N}(t,x)  | \les (M^{-\dl} + N_1 ^{-\dl}) |x|^{-\dl} \les N_1^{-\dl} |x|^{-\dl},
\end{align*}

\noi
which proves \eqref{fun1a} in this case. The other scenarios follow in a similar fashion.

\smallskip

We now prove \eqref{fun1a} in the cases (a), (b), (c) and (d).

\smallskip

\noi
{\bf $\bul$ Proof of \eqref{fun1a} in Case (a).} The bound \eqref{fun1a} in this case is a consequence of the \eqref{cov50}, \eqref{XXX15} and Lemma \ref{LEM:cov} (iv) by noting that a small power of $\eps$ can be converted to a small power of $N_1$ in Case (a).   

\smallskip

\noi
{\bf $\bul$ Proof of \eqref{fun1a} in Case (b).} By arguing as in the computations leading to the bounds \eqref{XXX12}-\eqref{XXX14} in the proof of Lemma \ref{LEM:cov} (iii) with $c = N_1^{-\ta^2}$, we have that\footnote{Note that when applying the frequency projection $\P_{N_j}^2 - \P_{N_1} \P_{N_2}$ to $\G_{\eps,N}$, the term $\1$ in \eqref{XXX10} disappears with our current choice for the parameter $c$ since $c^{-1} (2 \eps)^{-1} \ll N_1 $.} 

\noi
\begin{align*}
\big| ( \P_{N_j}^2 - \P_{N_1} \P_{N_2}) \big( \Gamma_{\eps,N}(t,x) - (1 - e^{- \frac{t}{\eps^2}}) G(x) \big)  \big| \les N_1^{- \ta^2}.
\end{align*}

\noi
Thus, \eqref{fun1a} in this case follows from the above and the estimate 

\noi
\begin{align*}
 | (\P_{N_j}^2 - \P_{N_1} \P_{N_2})(\P_N^2G)(x)   |  \les N_1^{-\dl} |x|^{-2 \dl},
\end{align*} 

\noi
for $\dl >0$ small enough, which in turn follows from \eqref{bb3} in Lemma \ref{LEM:green} and  an interpolation argument as in \eqref{XXX15}.
\smallskip

\noi
{\bf $\bul$ Proof of \eqref{fun1a} in Case (c).} By \eqref{covfun2} with \eqref{sto1}, we have
 
 \noi
 \begin{align}
 \begin{split}
  \P_{N_j}^2 \Gamma_{\eps,N} (t,x) - \P_{N_1} \P_{N_2} \Gamma_{\eps,N}(t,x) & = \frac{1}{\pi} \sum_{n \in Z^2} \eps^{-4} \chi_N^2(n)( \chi_{N_j}^2(n) - \chi_{N_1}(n) \chi_{N_2}(n) ) \\
  & \qquad \quad \times  \int_0 ^t  \ft{\D_\eps}( n, t-t') ^2  dt' e_n (x),
 \end{split}
 \label{fun10a}
 \end{align}

\noi
where $\ft{\D_\eps}$ is as in \eqref{De1}. From \eqref{fun10a} and the Poisson summation formula \eqref{poisson}, we get

\noi
\begin{align}
  \P_{N_j}^2 \Gamma_\eps (t,x) - \P_{N_1} \P_{N_2} \Gamma_\eps(t,x) = \frac{1}{\pi} \sum_{\ell \in \Z^2} \rho^\eps_{N_1,N_2}( x + 2 \pi \ell),
 \label{fun15}
\end{align}

\noi
where $\rho^\eps_{N_1,N_2}: \R^2 \to \R^2$ is the smooth function defined on the Fourier side by

\noi
\begin{align*}
\ft{\rho^\eps_{N_1,N_2}}(t,\eta) =  \eps^{-4} \chi_N^2(\eta) ( \chi_{N_j}^2(\eta) - \chi_{N_1}(\eta) \chi_{N_2}(\eta) ) \int_0 ^t \ft{\D_\eps}(\eta,t-t') ^2 dt'
\end{align*}

\noi
Moreover, by noting that $\eta \mapsto \chi_N^2(\eta) \chi_{N_j}(\eta)^2 - \chi_{N_1}(\eta) \chi_{N_2}(\eta)$ is supported on ${\{ N_1 \les |\eta | \les N_2 \}}$, integration by parts, Lemma \ref{LEM:F3} and the Leibniz formula, we have that

\noi
\begin{align}
\begin{split}
|\rho^\eps_{N_1,N_2}(t,x)| & = \frac{1}{2\pi} \Big| \int_{\R^2} \ft{\rho^\eps_{N_1,N_2}}(t,\eta) e^{i \eta \cdot x} d\eta \Big|  \\
& \sim |x|^{- 2m} \Big| \int_{\R^2} \Dl^{m}_{\eta} \big( \ft{\rho^\eps_{N_1,N_2}}(t,\eta) \big) e^{i \eta \cdot x} d\eta \Big| \\
& \les  \eps^{-4} |x|^{- 2m} \sum_{ \substack{ \al_1, \al_2, \al_3 \\ |\al_1| + |\al_2| + |\al_3| = 2m  }} \int_{\R^2} d\eta  \int_0 ^t dt' | \partial^{\al_1}_\eta \ft{D_\eps}(\eta,t-t') | \,  \\
& \qquad \quad \times | \partial^{\al_2}_\eta  \ft{D_\eps}(\eta,t-t')| \big| \partial^{\al_3}_\eta \big( \chi_N^2(\eta)  ( \chi_{N_j}^2(\eta) - \chi_{N_1}(\eta) \chi_{N_2}(\eta) ) \big) \big| \\
& \les \eps^{-4} |x|^{- 2m} \sum_{ \substack{ \al_1, \al_2, \al_3 \\ |\al_1| + |\al_2| + |\al_3| = 2m  }} \int_{ N_1 \les |\eta | \les N_2 } N_1^{-  |\al_3|} \\
& \qquad \quad \times \int_0 ^t e^{- \frac{t'}{\eps^2} } (t')^{|\al_1| + |\al_2| + 2} \eps^{- |\al_1| - |\al_2|} \sum_{1 \le p\le |\al_1| + |\al_2|} (1 + | t' \eps^{-1} \eta |^{p}) \\
& \qquad \quad \times \big( \ind_{\jb \eta \le (2\eps)^{-1}}  \, e^{ 2 t' \ld_\eps(\eta)}  + \ind_{\jb \eta > (2\eps)^{-1}} \big) dt' d\eta.
\end{split}
\label{fun16}
\end{align}

\noi
for any $m \in \N$.

Let us denote by $\1 = \1 (\al_1, \al_2, \al_3,p)$ and $\II = \II (\al_1, \al_2, \al_3,p)$ the contribution of $\jb \eta \le (2\eps)^{-1}  $ and $\jb \eta > (2\eps)^{-1}$ to the (right-most) right-hand-side of \eqref{fun16}, respectively, for a fixed tuple $(\al_1,\al_2,\al_3,p)$. From \eqref{upheat} and the inequality $e^{-y} \les y^{-a}$ for any $y > 0$ and $a >0$, we have that

\noi
\begin{align}
\begin{split}
\1 &  \les  \eps^{-4 - |\al_1| - |\al_2|} |x|^{- 2m}  N_1^{-  |\al_3|}  \int_{ N_1 \les |\eta | \les N_2 } \int_0 ^t e^{-  2 t' \jb \eta ^2} \\
&  \hspace{55mm} \times (t')^{|\al_1| + |\al_2| + 2}(1 + | t' \eps^{-1} \eta |^{p}) dt' d\eta \\
& \les \eps^{-4 - |\al_1| - |\al_2|} |x|^{- 2m}  N_1^{-  |\al_3|} \int_{ N_1 \les |\eta | \les N_2 } \jb{\eta}^{-2 ( |\al_1| + |\al_2| )- 4 } \\
& \hspace{60mm} \times  (1 + ( \eps \jb{\eta})^{-p})  \int_0 ^t e^{- \frac12 t' \jb \eta ^2} dt' d\eta \\
& \les \eps^{-4 - |\al_1| - |\al_2|} |x|^{- 2m}  N_1^{-  |\al_3|} \int_{ N_1 \les |\eta | \les N_2 } \jb{\eta}^{-2 ( |\al_1| + |\al_2| )-6} (1 + ( \eps \jb{\eta})^{-p}) \\
& \les \eps^{-4 - |\al_1| - |\al_2|} |x|^{- 2m}  N_1^{-  |\al_3| - 2 ( |\al_1| + |\al_2|) -  4 + c_1 \ta } \les |x|^{- 2m}  N_1^{- 2m + c_2 \ta},
\end{split}
\label{fun17}
\end{align}

\noi
in view of the the conditions $|\al_1| + |\al_2| + |\al_3| = 2m$ and the condition ${\eps^{-1 + \ta} \les N_1 \le N_2 \les \eps^{-1 - \ta}}@$. Here $c_j = c_j(p)$, $j=1,2$ are fixed constants. We bound similarly

\noi
\begin{align}
\II \les |x|^{- 2m}  N_1^{- 2m +c \ta} \label{fun18},
\end{align}

\noi
for some constant $c = c(p)$. Performing the summation over $(\al_1,\al_2,\al_3,p)$ with \eqref{fun16}, \eqref{fun17} and \eqref{fun18} yields

\noi
\begin{align}
|\rho^\eps_{N_1,N_2}(x)|  \les |x|^{- 2m}  N_1^{- 2m + c \ta},
\label{fun19}
\end{align}

\noi
for any $m \in \N$ and $x \in \R^2 \setminus \{0\}$. Hence, summing in $\ell \in \Z^2$ with $m=2$ in \eqref{fun15} with \eqref{fun19} gives

\noi
\begin{align}
\big|   \P_{N_j}^2 \Gamma_{\eps,N} (t,x) - \P_{N_1} \P_{N_2} \Gamma_{\eps,N}(t,x) \big| \les |x|^{-4} N_1^{- 4 + c \ta},
\label{fun20}
\end{align}

\noi 
for any $x \in \T^2 \setminus \{0\}$ and uniformly in $N \ge 1$.

Thus, interpolating \eqref{fun20} with Lemma \ref{LEM:cov} (ii) or (iii) (depending on the relative sizes of $N$ and $(2\eps)^{-1}$) as in \eqref{XXX15} shows \eqref{fun1a} in this regime and concludes the proof of Case (c).

\smallskip

\noi
{\bf $\bul$ Proof of \eqref{fun1a} in Case (d).} The proof of this case follows from arguments that are similar to the proof of Case (c) above by noting that $\eps \sim_\ta 1$ in the current scenario.
 \end{proof} 
 
 \subsection{Construction and covariance of stochastic objects}\label{SUBSEC:sin2} In the current section, we construct the stochastic objects $\{\U_{\eps,N}\}_{N \in \N}$ for all $\eps \in [0,1]$ and study the convergence of the constructed limits as $\eps \to 0$; see Proposition \ref{PROP:stosin2} below. 
 
 Fix $\eps \in [0,1]$ and $N \in \N$. We define the following functions - which call potentials - for $\eps > 0$ and $N \in \N$,
 
 \noi
 \begin{align}
 \mathcal J_{\eps,N} (t,x) = e^{-2\pi \Gamma_{\eps,N}(t,x)},
 \label{pot1}
 \end{align}
 
 \noi
 for any $0 \le t \le 1$ and $x \in \T^2$. Here $\G_{\eps,N}$ is as in \eqref{covfun}.
 
 In view of the definition of the function $\log_{-}$ in \eqref{log_minus} and Lemma \ref{LEM:cov}, the following identities hold:
 
 \noi
 \begin{align}
  \mathcal J_{\eps,N} (t,x) \sim \begin{cases} \frac{|x| + N^{-1}}{|x| + t^{\frac12}} \quad & \text{if } t^{\frac12} > N^{-1}\\
  1 \quad & \text{if } t^{\frac12} \le N^{-1}  \end{cases},
   \label{pot2}
 \end{align}
 
 \noi
 for $N \le (2\eps)^{-1}$ and 
 
  \noi
 \begin{align}
  \mathcal J_{\eps,N} (t,x) \sim \begin{cases}\Big( \frac{ |x| +2 \eps }{ |x| + t^{\frac12} } \Big) \Big( \frac{ |x| + N^{-1} }{ |x| +  2\eps } \Big)^{1- e^{-\frac{t}{\eps^2}}} \quad & \text{if } t^{\frac12} > 2\eps\\
 \Big( \frac{ |x| + N^{-1} }{ |x| + 2\eps } \Big)^{1- e^{-\frac{t}{\eps^2}}}  \quad & \text{if } t^{\frac12} \le 2\eps  \end{cases},
  \label{pot3}
 \end{align}
 
 \noi
 for $N > (2\eps)^{-1}$.

For $\s_1, \s_2 \in \{1,-1\}$ and $\eps \in [0,1]$, we also define $\mathcal{J}^{\s_1 \s_2}(t,x) := (\mathcal{J}(t,x))^{\s_1 \s_2}$ for $t \ge 0$ and $x \in \T^2$.

The next lemma highlights a ``cancellation of charge" property of the potentials defined above as first noted in \cite{HS} in the context of the parabolic sine-Gordon model ($\eps = 0$) and in \cite{ORSW1} for the hyperbolic sine-Gordon equation ($\eps =1$).

\noi
\begin{lemma}\label{LEM:charge}
Let $\ld > 0$ and $p \in \N$. Given $j \in \{ 1, \cdots, 2p\}$, we set $\s_j = 1$ if $j$ is even and $\s_j = -1$ is odd. Let $\mathfrak S_p$ be the set of permutations of the set $\{1, \cdots, p \}$. Fix $0<t\le 1$, $\eps \in [0,1]$ and $N \in \N$. Then, the following estimate holds:

\noi
\begin{align}
\prod_{1 \le j < k \le 2p } \mathcal{J}_{\eps,N}^{\s_1 \s_2}(t,x_j - x_k) ^\ld  \les \max_{\tau \in \mathfrak S_p} \prod_{1 \le j \le p} \big( |x_{2j} - x_{2 \tau(j) -1 }| + N^{-1} \big)  ^{-\ld},
\label{Ebd}
\end{align}

\noi
for any set of $2p$ points $\{ x_j \}_{1 \le j \le 2p}$ in $\T^2$.
\end{lemma}

\begin{proof}
By proceeding by induction as in \cite[Lemma 2.5]{ORSW1} and by using \eqref{pot2} and \eqref{pot3}, we may get the bound

\noi
\begin{align}
\prod_{1 \le j < k \le 2p } \mathcal{J}_{\eps,N}^{\s_1 \s_2}(t,x_j - x_k) ^\ld  \les \max_{\tau \in \mathfrak S_p} \prod_{1 \le j \le p} \mathcal{J}_{\eps,N}(t,x_j - x_k)  ^{-\ld},
\label{Ebd1}
\end{align}

\noi
for any $0 \le t \le 1$ and any set of $2p$ points $\{ x_j \}_{1 \le j \le 2p}$ in $\T^2$. The estimate \eqref{Ebd} then follows from \eqref{Ebd1} and the bound

\noi
\begin{align*}
\mathcal{J}_{\eps,N}(t,x) ^{-\ld}  \les \big( |x| + N^{-1} \big)  ^{-\ld},
\end{align*}

\noi
for $x \in \T^2$, which follows easily from \eqref{pot2} and \eqref{pot3}.
  \end{proof}

\begin{proposition}\label{PROP:stosin2}
Let $0 < \be^2 < 4 \pi$. Fix any finite $p,q \geq 1$, $T >0$, $\al > \frac{\be^2}{4 \pi}$. Then, the following holds:

\smallskip

\noi
\textup{(i)} Let $\eps \in [0,1]$. The sequence $   \{ \U_{\eps,N}  \}_{N \in \N}$ defined in \eqref{Ups} is a Cauchy sequence in $L^p \big( \O;  L^q ( [0,T]; W^{-\al, \infty}(\T^2 ) \big)$ and thus converges to a limiting stochastic process in $L^p \big( \O;  L^q ( [0,T]; W^{-\al, \infty}(\T^2 ) \big) $, denoted by $ \U_\eps $ as $N \to \infty$.

\smallskip

\noi
\textup{(ii)} The sequence $ \{ (\eps,t) \mapsto \U_{\eps,N} (t)  \}_{N \in \N}$ also converges to $(\eps, t) \mapsto  \U_\eps (t)$ ${L^p \big( \Omega; L^q\big( [0,1] \times [0,T]; W^{-\s, \infty}(\T^2)\big)  \big)}$ and almost surely in $C([0,1] \times [0,T]; W^{-\al, \infty}(\T^2 ))$ as $N \to \infty$.
\end{proposition}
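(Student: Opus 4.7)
The strategy is the standard moment/Wick method for imaginary Gaussian multiplicative chaos, but with two twists: all bounds must be uniform in $\eps\in[0,1]$, and we need joint continuity in $(\eps,t)$ in order to apply a bi-parameter Kolmogorov criterion as in the proof of Proposition \ref{PROP:sto}. The central computation is that for any integer $p\geq 1$, points $y_1,\dots,y_{2p}\in\T^2$ and signs $\s_j=(-1)^{j+1}$, the Gaussian identity gives
\begin{align*}
\gamma_{\eps,N}^{2p}\,\E\bigg[\prod_{j=1}^{2p} e^{i\be \s_j \Psi_{\eps,N}(t,y_j)}\bigg]
= \exp\Big(-\be^2 \sum_{1\le j<k\le 2p}\s_j\s_k\,\G_{\eps,N}(t,y_j-y_k)\Big),
\end{align*}
where the prefactor $\gamma_{\eps,N}^{2p}=e^{p\be^2\s_{\eps,N}}$ exactly cancels the diagonal contribution $-p\be^2\s_{\eps,N}$. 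By Lemma \ref{LEM:cov} (ii)-(iii) we have $-\be^2\G_{\eps,N}(t,x)\approx \frac{\be^2}{4\pi}\log\mathcal{J}_{\eps,N}(t,x)$ up to a bounded constant, so the right-hand side is comparable to $\prod_{j<k}\mathcal{J}_{\eps,N}^{\s_j\s_k\l}(t,y_j-y_k)$ with $\l=\frac{\be^2}{4\pi}$.

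I would then write $\jb{\nb}^{-\al}\U_{\eps,N}(t,x)=\int J_\al(x-y)\U_{\eps,N}(t,y)\,dy$ using the Bessel kernel of Lemma \ref{LEM:bessel} and compute $\E\big[|\jb{\nb}^{-\al}\U_{\eps,N}(t,x)|^{2p}\big]$ by expanding $|\cdot|^{2p}=(\U\cj\U)^p$ and integrating in the $2p$ dummy variables. After applying the charge-cancellation bound of Lemma \ref{LEM:charge} with exponent $\l$, the alternating product collapses to a sum over pairings:
\begin{align*}
\E\big[|\jb{\nb}^{-\al}\U_{\eps,N}(t,x)|^{2p}\big]
\les p!\,\bigg(\iint J_\al(x-v)J_\al(x-u)\,\mathcal{J}_{\eps,N}(t,v-u)^{-\l}\,dv\,du\bigg)^{p}.
\end{align*}
Using $J_\al(z)\les|z|^{\al-2}+1$ (Lemma \ref{LEM:bessel}) and $\mathcal{J}_{\eps,N}(t,z)^{-\l}\les 1+|z|^{-\l}$, the 2D singular integral on $\T^2$ is finite uniformly in $\eps,N,t$ precisely when $\al>\l=\be^2/(4\pi)$, which is the hypothesis. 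Combined with Sobolev embedding $W^{-\al+\frac{2}{r},r}\hookrightarrow W^{-\al,\infty}$ for large $r$ and Minkowski, this yields the uniform $L^p(\O;L^q_T W^{-\al,\infty}_x)$ bound.

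The Cauchy property in $N$ follows from the same scheme applied to $\U_{\eps,M}-\U_{\eps,N}$: expanding the $2p$-th moment as a telescoping sum and using Lemma \ref{LEM:cov2} to gain a factor $N^{-\dl}|y_j-y_k|^{-2\dl}$ on at least one edge of each pairing produces a decay rate $N^{-\g}$ for some $\g>0$ (one absorbs the extra $|y_j-y_k|^{-2\dl}$ against the integrable singularity provided $\al>\l+\dl$, which is acceptable since we have room in $\al$). The same moment computation applied to $\dl_{h_1,h_2}\U_{\eps,N}$ (cf.\ the operators in \eqref{diff}) gives H\"older continuity in $(\eps,t)$ with polynomial rate $\|(h_1,h_2)\|_2^\g$, using that $\G_{\eps,N}$ is differentiable in $(\eps,t)$ for $\eps>0$ by Lemma \ref{LEM:F3} and that Lemma \ref{LEM:cov} (iv) provides the needed comparison at $\eps=0$.

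The main obstacle is the last step: obtaining a H\"older modulus in $\eps$ which is uniform down to $\eps=0$. The difficulty is that the qualitative behaviour of $\G_{\eps,N}$ jumps across the threshold $\jb n\sim(2\eps)^{-1}$ -- Lemma \ref{LEM:cov} gives two genuinely different formulas for $\jb n\le(2\eps)^{-1}$ versus $\jb n>(2\eps)^{-1}$ -- so one cannot directly differentiate in $\eps$ on the full frequency range. The plan is to handle the continuity at $\eps=0$ separately via Lemma \ref{LEM:cov} (iv), producing an $\eps^{2\ta}$-type error uniformly in $N\les \eps^{-1+\ta}$, and for $(\eps,\eps+h_1)\subset(0,1]^2$ to use the smoothness bound \eqref{B1} on $\partial_\eps\ft{\D_\eps}(\eta,t)$ together with interpolation against the uniform bound, exactly as done with $\Psi_{\eps,N}^\ell$ in the proof of Proposition \ref{PROP:sto}. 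A full-probability set $\O_1$ on which the almost-sure convergence in $C([0,1]\times[0,T];W^{-\al,\infty}_x)$ holds is then constructed by combining the bi-parameter Kolmogorov criterion \cite[Theorem 2.1]{Baldi} on $(0,1]\times[0,T]$ with a countable intersection of one-parameter Kolmogorov sets at each rational $\eps$, analogous to \eqref{S28}.
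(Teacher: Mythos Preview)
Your proposal is correct and follows essentially the same route as the paper: the $2p$-th moment computation via the Gaussian identity and the charge-cancellation Lemma \ref{LEM:charge}, the Cauchy-in-$N$ estimate through Lemma \ref{LEM:cov2}, and the bi-parameter Kolmogorov argument for joint $(\eps,t)$-continuity are all exactly what the paper does. The only point worth flagging is the mechanism by which the crude $\eps$-difference bound is made uniform in $N$: the paper first proves $\E\big[|\jb{\nb}^{\dl-\al}(\U_{\eps,N}-\U_{0,N})|^2\big]\les N^C\eps^\kappa$ (for $N\les\eps^{-1+\ta}$) by a direct mean-value/Bernstein argument, and then removes the $N^C$ by splitting at an intermediate scale $M$ with $M^C\sim\eps^{-\kappa/2}$ and invoking the already-established Cauchy bound \eqref{stosin2} on the tail $\U_{\eps,N}-\U_{\eps,M}$; the same reduction is used for the $(h_1,h_2)$-increment. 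Your phrase ``interpolation against the uniform bound'' is the right idea but this concrete two-scale splitting is what makes it work.
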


\begin{proof} Fix $T > 0$, $\al > \frac{\be^2}{4 \pi}$, and $0 < \dl \ll 1$. By using Lemma \ref{LEM:charge}, Lemma \ref{LEM:cov2} and by a straightforward adaptation of the argument in the proof of \cite[Proposition 1.1]{ORSW1}, we get

\noi
\begin{align}
& \E \big[ \big| \jb{ \nb }^{\dl - \al} \U_{\eps,N} (t,x)  \big|^{2p} \big] \les_T 1 \label{stosin1} \\
& \E \big[ \big| \jb{ \nb }^{\dl - \al} \big( \U_{\eps,N_2} (t,x) - \U_{\eps,N_1} (t,x)  \big) \big|^{2} \big] \les_T  N_1 ^{- \kappa}, \label{stosin2}
\end{align}

\noi
uniformly in $(\eps,t) \in [0,1] \times [0,T]$, $x \in \T^2$, $N \in \N$ and for $0 < \kappa \ll 1$, and $N_2 \geq N_1$ two integers. 

By arguing as in the proof of Proposition \ref{PROP:sto}, the bounds \eqref{stosin1} and \eqref{stosin2} show that for any fixed $\eps \in [0,1]$, $   \{ \U_{\eps,N}  \}_{N \in \N}$ and $ \{ (\eps,t) \mapsto \U_{\eps,N} (t)  \}_{N \in \N}$ are Cauchy sequences in $L^p \big( \O;  L^q ( [0,T]); W^{-\al, \infty}(\T^2 ) \big) $ and in $L^p \big( \Omega; L^q\big( [0,1] \times [0,T]; W^{-\s, \infty}(\T^2)\big)  \big)$ respectively. This proves item (i) and the first part of item (ii).

We now look at the almost sure convergence of the sequence $ \{ (\eps,t) \mapsto  \Theta_{\eps,N}(t) \}_{N \in \N}$ in item (ii). We claim that the following estimate holds:

\noi
\begin{align}\label{stosin3}
 \E \big[ \big| \jb{ \nb }^{\dl - \al} \big( \U_{\eps,N} (t,x) - \U_{0,N} (t,x)  \big) \big|^{2} \big] \les \eps^{\g} 
\end{align}

\noi
for some small $\g >0$.  In order to prove \eqref{stosin3}, it suffices to show

\noi
\begin{align}\label{stosin4}
 \E \big[ \big| \jb{ \nb }^{\dl - \al} \big( \U_{\eps,N} (t,x) - \U_{0,N} (t,x)  \big) \big|^{2} \big] \les N^{C}  \eps^{\kappa}
 \end{align}

\noi
fo some large absolute constant $C>0$ (depending on $\be$, see below), some small $\kappa >0$ and $N \les \eps^{- 1  + \ta}$. Indeed, if $N^C \les \eps^{- \frac \kappa 2}$, then \eqref{stosin4} yields \eqref{stosin3}. Otherwise, in the case $N^C \gg \eps^{- \frac \kappa 2}$, by combining \eqref{stosin2} and \eqref{stosin4} with an integer $M$ such that $M^{C} \sim \eps^{-\frac \kappa 2}$ gives

\noi
\begin{align*}
 \E \big[ \big| \jb{ \nb }^{\dl - \al} \big( \U_{\eps,N} (t,x) - \U_{0,N} (t,x)  \big) \big|^{2} \big] &  \les \E \big[ \big| \jb{ \nb }^{\dl - \al} \big( \U_{\eps,M} (t,x) - \U_{0,M} (t,x)  \big) \big|^{2} \big] \\
& \quad + \sup_{\eps_0 \in [0,1]} \E \big[ \big| \jb{ \nb }^{\dl - \al} \big( \U_{\eps_0,N} (t,x) - \U_{\eps_0, M} (t,x)  \big) \big|^{2} \big] \\
& \les \eps ^{\frac \kappa 2} + M^{- \kappa} \les \eps^{\gamma},
\end{align*}

\noi
for some small $\g >0$. This proves that \eqref{stosin3} reduces to \eqref{stosin4}.

We now prove \eqref{stosin4}. By Lemma \ref{LEM:bessel}, the H\"older and Cauchy-Schwarz inequalities, we have that

\noi
\begin{align} 
\begin{split}
& \E \big[ \big| \jb{ \nb }^{\dl - \al} \big( \U_{\eps,N} (t,x) - \U_{0,N} (t,x)  \big) \big|^{2} \big] \\
& \quad = \int_{\T^2} \int_{\T^2} J_{\al - \dl} (x-y) J_{\al - \dl}(x-z)   \\
& \qquad  \quad \times \E \big[ \big(  \U_{\eps,N} (t,y)  -  \U_{0,N} (t,y)  \big) \big(  \U_{\eps,N}(t,z) -  \U_{0,N}(t,z) \big)  \big] dy dz  \\
& \quad \les  \big\| \U_{\eps,N} (t,y)  -  \U_{0,N} (t,y)  \big\|_{L^{\infty}_y L^2 (\O)}^2,
\end{split}
\label{stosin5}
\end{align}

\noi
assuming $\al - \dl > 0$. In view of \eqref{Ups}, one can write

\noi
\begin{align*}
\U_{\eps,N} (t,y)  -  \U_{0,N} (t,y) & = e^{ \frac{\be^2}{2} \s_{\eps,N}(t)} \big(  e^{i \be \Psi_{\eps,N}(t,y)} -  e^{i \be \Psi_{0,N}(t,y)}  \big) \\
& \qquad  + \big( e^{ \frac{\be^2}{2} \s_{\eps,N}(t)} - e^{ \frac{\be^2}{2} \s_{0,N}(t)} \big) e^{i \be \Psi_{0,N}(t,y)} \\
& \qquad =: \1 + \II.
\end{align*}

\noi
for any $y \in \T^2$. Here, $\s_{\eps,N}$ is as in \eqref{sig}. Note that we have $\s_{\eps,N} = \Gamma_{\eps,N}(t,0)$, where $\Gamma_{\eps,N}$ is the covariance function \eqref{covfun}. By this observation, Lemma \ref{LEM:cov} (ii)-(iv), the condition $N \les \eps^{-1 + \ta}$, and the mean value theorem, we have

\noi
\begin{align}
\|  \II \|_{L^{\infty}_y L^2 (\O)} \les N^{100\be^2} \eps^{\ta}.
\label{stosin52}
\end{align}

\noi
Besides, by the mean value theorem, Minkowski's and Bernstein's inequalities, and by proceeding as in the proof of \eqref{S24} (for $\l = 1$), we have

\noi
\begin{align}
\begin{split}
\| \1 \|_{L^{\infty}_y L^2 (\O)} & \les N^{100 \be^2} \big\| \Psi_{\eps,N} (t,y)  -  \Psi_{0,N} (t,y)  \big\|_{L^2 (\O) L^{\infty}_y } \\
& \les N^{ 100 \be^2 + 10} \big\| \Psi_{\eps,N} (t,y)  -  \Psi_{0,N} (t,y)  \big\|_{ L^2 (\O) W^{-1,\infty}_y} \les N^C \eps^\ta,
\end{split}
\label{stosin53}
\end{align}

\noi
for some $C > 0$. The bounds \eqref{stosin5}, \eqref{stosin52} and \eqref{stosin53} imply \eqref{stosin4}.

We now claim that the following estimates hold:

\noi
\begin{align}
& \E \big[ \big| \jb{ \nb }^{\dl - \al}  ( \dl_{h_1, h_2} \U_{\eps,N} (t,\cdot)) (x)   \big|^{2} \big] \les \| (h_1, h_2) \|_2 ^{\g} \label{stosin7} \\
& \E \big[ \big| \jb{ \nb }^{\dl - \al} (  \dl_{h_1, h_2} \big( \U_{\eps,N_2} (t,\cdot) -  \U_{\eps,N_1} (t,\cdot) \big)) (x)   \big|^{2}  \big] \les  N_1 ^{- \g}  \| (h_1, h_2) \|_2 ^{\g} \label{stosin8}
\end{align}

\noi
uniformly in $(\eps,t) \in [0,1] \times [0,T]$, $x \in \T^2$ and $N \geq 1$ such that $(\eps + h_1, t + h_1) \in [0,1] \times [0,T]$ and $N_2 \geq N_1$. Since, \eqref{stosin8} follows from \eqref{stosin2} and \eqref{stosin7}, we focus on \eqref{stosin7}. By arguing as we did in the above when reducing \eqref{stosin3} and \eqref{stosin4}, it suffices to prove

\noi
\begin{align}
\E \big[ \big| \jb{ \nb }^{\dl - \al}  ( \dl_{h_1, h_2} \U_{\eps,N} (t,\cdot)) (x)   \big|^{2} \big] \les N^{ 100(1 + \be^2) } \| (h_1, h_2) \|_2 ^{\kappa}, 
\label{stosin9}
\end{align}

\noi
for some small $\kappa > 0$ and uniformly in all parameters. Here, $\dl_{h_1,h_2}$ is as in \eqref{diff}. By arguing as in the proof of \eqref{stosin4} and applying the mean value theorem, the estimate \eqref{stosin9} essentially reduces to proving estimates on the stochastic convolution \eqref{convo10} which are similar to \eqref{S6}; we omit details.

Interpolating the bounds \eqref{stosin7} and \eqref{stosin8} with \eqref{stosin1} and applying the bi-parameter Kolmogorov continuity criterion in Lemma \ref{LEM:kol} together with the Borel-Cantelli lemma as in Proposition \ref{PROP:sto} concludes the proof 

\end{proof}

\subsection{Well-posedness and convergence of the dynamics}\label{SUBSEC:sin3}
In this subsection, we sketch the proof of Theorem \ref{THM:sinGWP}.

We first prove uniform (in $\eps$) local well-posedness in $C([0,1] \times [0,T]; H^{1-\al}_x)$ of $(\eps,t) \mapsto u(\eps,t)$ the solution to \eqref{rSdSG}.

After making the ansatz \eqref{H1}, we are led to study \eqref{SdSG11} and \eqref{SdSG3}. It thus suffices to consider the following system with unknown $(\eps,t) \mapsto v(\eps,t)$.

\noi
\begin{align}
\begin{cases}
\eps^2 \dt^2 v + \dt v +(1-\Dl)v  +
\Im\big( e^{i \be v} \U \big)     
=0\\
(v,  \ind_{\eps > 0} \dt v) |_{t=0} =  (\phi_0, \ind_{\eps > 0} \dt \phi_1),
\end{cases}
\quad (x,\eps,t) \in \T^2 \times [0,1] \times \R_+.
\label{sin}
\end{align}

\noi
for a given (deterministic) distribution $(\eps,t) \mapsto \U(\eps,t)$. We prove the following local well-posedness result:

\noi
\begin{proposition}\label{PROP:sinlwp}
Let  $\, \! 0 \! < \! \al \! < \! \frac12$, $(\phi_0, \phi_1) \in \H^{1- \al}(\T^2)$, and $(\eps,t) \mapsto \U_\eps(t)$ be a distribution in $C ( [0,1]^2 ; W^{ - \al, \infty }( \T^2) )$. Then, there exists a random time $T = T( \| \U \|_{ C( [0,1]^2; W^{- \al, \infty }_x ) }) \in (0,1] $ and a unique solution $v$ to \eqref{sin} in the class ${C( [0,1]\times [0,T] ; H^{1- \al}(\T^2))}$ with initial data $(\phi_0,\phi_1)$. Moreover, the solution map $(\phi_0, \phi_1, \U) \mapsto v$ is continuous.
\end{proposition}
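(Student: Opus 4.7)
The strategy is a direct contraction mapping argument, in the spirit of the proof of Proposition~\ref{PROP:LWP}, carried out simultaneously in the parameter $\eps$ and time $t$ so that the solution is obtained as an element of $C([0,1]\times [0,T];H^{1-\al}(\T^2))$ with no separate continuity-in-$\eps$ analysis. Writing \eqref{sin} in Duhamel form, I would look for a fixed point of
\[
\Gamma(z)(\eps,t) \;=\; P_\eps(t)(\phi_0,\phi_1) \;-\; \I_\eps(t)\!\left(\Im\bigl(e^{i\be z(\eps,t)}\,\U(\eps,t)\bigr)\right),
\]
with $P_\eps$ and $\I_\eps$ as in \eqref{O1a} and \eqref{OP1}. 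The linear piece is handled immediately by Corollary~\ref{COR:det}, which gives
$\|P_\eps(\phi_0,\phi_1)\|_{C_{\eps,T}H^{1-\al}_x}\les\|(\phi_0,\phi_1)\|_{\H^{1-\al+\ta}_x}$, at the cost of the usual $\ta$-derivative loss.

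The core of the argument is the nonlinear estimate
\[
\big\|\,e^{i\be z}\,\U\,\big\|_{C_{\eps,T}H^{-\al}_x}
\;\les\;\|\U\|_{C_{\eps,T}W^{-\al+\ta,\infty}_x}\,\bigl(1+\|z\|_{C_{\eps,T}H^{1-\al}_x}\bigr),
\]
since combined with the Duhamel bound from Corollary~\ref{COR:det} this gives $\|\I_\eps(\Im(e^{i\be z}\U))\|_{C_{\eps,T}H^{1-\al}_x}\les T^{1/2}\|\U\|\,(1+\|z\|)$, and a $T$ chosen small in terms of $\|\U\|$ and $\|(\phi_0,\phi_1)\|$ closes the contraction on a ball $B_R\subset C([0,1]\times[0,T];H^{1-\al}(\T^2))$ of radius $R\sim\|(\phi_0,\phi_1)\|_{\H^{1-\al+\ta}_x}+1$. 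To prove the nonlinear estimate I would apply the fractional product rule Lemma~\ref{LEM:bilin}(ii) with $s=\al$, $r=2$ and a pair $(p,q)$ with $p$ very large and $\frac{1}{p}+\frac{1}{q}\le\frac12+\frac{\al}{2}$, to obtain
\[
\big\|\jb{\nb}^{-\al}\!\bigl(e^{i\be z}\,\U\bigr)\big\|_{L^2}
\;\les\;\big\|\jb{\nb}^{-\al}\U\big\|_{L^p}\,\big\|\jb{\nb}^{\al} e^{i\be z}\big\|_{L^q}.
\]
The first factor is controlled by $\|\U\|_{W^{-\al+\ta,\infty}}$ through Bessel-potential / Sobolev embedding (losing $\ta/2$ in the exponent to pass from $L^\infty$ to large $L^p$). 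For the second factor, I would apply the Lipschitz fractional chain rule Lemma~\ref{LEM:bilin}(iii) to $F(x)=e^{i\be x}$ to get $\||\nb|^{\al}e^{i\be z}\|_{L^q}\les\be\,\||\nb|^{\al}z\|_{L^q}$, and then Sobolev embedding $H^{1-\al}\hookrightarrow W^{\al,q}$ to dominate this by $\|z\|_{H^{1-\al}}$; the embedding requires $\tfrac{1}{q}\ge\al$, which together with the scaling constraint $\tfrac{1}{q}\le\tfrac{1+\al-\eps'}{2}$ forces precisely the threshold $\al<\tfrac12-\tfrac{\ta}{2}$ assumed in the statement.

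The difference estimate needed for the contraction and for continuous dependence is handled by writing $e^{i\be z_1}-e^{i\be z_2}=i\be(z_1-z_2)\!\int_0^1 e^{i\be(\theta z_1+(1-\theta)z_2)}d\theta$, so that
$\Im(e^{i\be z_1}\U)-\Im(e^{i\be z_2}\U)$ has a clean multiplicative factor $(z_1-z_2)\in H^{1-\al}$, and the same product plus chain-rule scheme applies uniformly for $z_1,z_2\in B_R$; this also gives the locally Lipschitz dependence of the solution map on $(\phi_0,\phi_1,\U)$. Uniqueness in the full class $C([0,1]\times[0,T];H^{1-\al}(\T^2))$ (beyond the ball $B_R$) follows from a standard continuity-in-time argument. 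Finally, continuity in $\eps\in[0,1]$ of the fixed point comes for free from the fact that $\Gamma$ maps $C([0,1]\times[0,T];H^{1-\al}(\T^2))$ into itself: the bi-parameter space is preserved because the operator bounds of Corollary~\ref{COR:det} already incorporate continuity in $\eps$.

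The main obstacle is the fractional chain-rule step combined with the product estimate: one has to be careful enough with the exponents to simultaneously accommodate (i) the Sobolev embedding $H^{1-\al}\hookrightarrow W^{\al,q}$, which costs $2\al$ derivatives, (ii) the scaling condition in Lemma~\ref{LEM:bilin}(ii), which after sending $p$ to $\infty$ forces $q\ges 2/(1+\al)$, and (iii) the small loss $\ta$ coming both from Corollary~\ref{COR:det} and from trading $L^\infty$ for a large $L^p$ against $\U$. These three constraints compatibly close exactly when $\al<\tfrac12-\tfrac{\ta}{2}$, which is the threshold in the statement; everything else is then a routine replay of the contraction and continuity argument used in Proposition~\ref{PROP:LWP}.
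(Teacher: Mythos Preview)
Your proposal is correct and follows essentially the same scheme as the paper: Duhamel formulation, Corollary~\ref{COR:det} for the linear and Duhamel bounds, Lemma~\ref{LEM:bilin}(ii) to separate $e^{i\be z}$ from $\U$, the fractional chain rule Lemma~\ref{LEM:bilin}(iii) on $e^{i\be z}$, and the fundamental theorem of calculus for the difference estimate.

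The only discrepancy is in the bookkeeping of Lebesgue exponents and, consequently, in where the threshold $\al<\tfrac12$ actually shows up. The paper places $e^{i\be z}$ in $H^{\al-\ta}=W^{\al-\ta,2}$ and $\U$ in $W^{-\al+\ta,2/(\al-\ta)}$, so that the embedding $H^{1-\al}\hookrightarrow H^{\al-\ta}$ already forces $\al<\tfrac12$ at the level of the main nonlinear estimate. With your allocation ($p$ large for $\U$, $q$ near $2/(1+\al)$ for $e^{i\be z}$), the constraints $\tfrac1q\ge\al$ and $\tfrac1q\le\tfrac{1+\al}{2}$ only give $\al\le 1$, not $\al<\tfrac12$ as you claim; the genuine restriction $\al<\tfrac12-\tfrac{\ta}{2}$ instead emerges from the \emph{triple} product $(z_1-z_2)\,F(z_1,z_2)\,\U$ in the difference estimate, exactly where the paper records it (``provided $0<1-2\al-\ta-\dl$''). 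Either allocation of exponents closes under the stated hypothesis, so this is a cosmetic misattribution rather than a gap.
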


\begin{proof}
The proof of Proposition \ref{PROP:sinlwp} essentially follows that of \cite[Proposition 3.1]{ORSW2} upon very minor modifications. The key observation which allows us to globalize the solutions that are constructed in Proposition \ref{PROP:sinlwp} is that the local existence time $T$ only depends on the distribution $(\eps,t) \mapsto \Theta_\eps(t)$ and not on the initial data since the bounds in the perturbative argument are in fact linear in the nonlinear remainder $v$ solution to \eqref{sin}.
\end{proof}

As in Subsection \ref{SEC6}, the term $\dt v_\eps(t)$, for any fixed $t \in \R_+$, becomes unbounded as $\eps \to 0$. This causes issues when trying to extend the solution constructed in Proposition \ref{PROP:sinlwp} to larger times. The next result aims at fixing this problem.

Recall the definition of the space $\mathcal V^s_{1}(\T^2)$, $s \in \R$, in \eqref{norm10}.

\noi
\begin{proposition}\label{PROP:sinlwp2}
Let $ \! 0 \! < \! \al \! < \! \frac12$, $(\phi_0, \phi_1) \in \H^{1- \al}(\T^2)$, and $(\eps,t) \mapsto \U_\eps(t)$ be a distribution in $C ( [0,1]^2 ; W^{ - \al, \infty }( \T^2) )$. Fix $T_0 \in (0,1]$. Then, there exist a time $T = T( \| \U \|_{ C( [0,1]^2; W^{- \al, \infty }_x ) }) \in (0,1] $ and a unique solution $\vec v := (v, \dt v)$ to \eqref{sin} on $(0,1] \times [T_0,T]$ with initial data $(\phi_0,\phi_1)$ at time $T_0$, in the class $C \big( [T_0, T]; \V_{1}^{1-\al}(\T^2) \big).$ Moreover, the solution map $(\phi_0, \phi_1, \Theta) \mapsto \vec v$ is continuous.
\end{proposition}

\begin{proof} The proof follows from computations that are similar to those written in the proofs of Proposition \ref{PROP:LWP2} and \cite[Proposition 3.1]{ORSW2}; we omit details.
\end{proof}

We now conclude this section with the proof of Theorem \ref{THM:sinGWP}.

\begin{proof}[Proof of Theorem \ref{THM:sinGWP}] Let $\be^2 \in (0,2\pi)$ and fix $\al \in \big( \frac{\be^2}{4\pi}, \frac12 \big)$. We only prove the result for $T=1$ for convenience and consider a fixed inital data $(\phi_0,\phi) \in \H^{1-\al}(\T^2)$. By Proposition \ref{PROP:stosin2} and Proposition \ref{PROP:sinlwp}, we may construct $v$ solution to \eqref{SdSG3} on $[0,1] \times [0,T_0]$ for some small time $T_0 \in (0,1]$. 

By applying Proposition \ref{PROP:stosin2} Proposition \ref{PROP:sinlwp2} (possibly repeatedly) allows us to extend $\{v_{\eps}\}_{\eps \in (0,1]}$ to the whole time interval $[0,1]$ as the existence time in Proposition \ref{PROP:sinlwp2} only depend on the noise $\{\Theta_{\eps}\}_{\eps \in [0,1]}$. 

Similarly, it easy to show that $v_0$, the solution to \eqref{SdSG3} on the slice $\{ \eps = 0 \}$ can also be extended to the time interval $[0,1]$ by repeating the computations in the proof of \cite[Proposition 3.1]{ORSW2} and noting that the local existence time only depends on the noise $\Theta_0$.

We can then prove the convergence of $v_\eps$ to $v_0$ in $C\big([0,1]; H^{1-\al}(\T^2)\big)$ as $\eps \to 0$ by adapting the argument of Lemma \ref{LEM:GWP1} to the current setting. The convergence of $u_\eps := \Psi_\eps + v_\eps$ to $u_0 := \Psi_0 + v_0$ in $C\big([0,1]; H^{-\s}(\T^2)\big)$, $\s >0$, as $\eps \to 0$ then follows from the convergence of the nonlinear remainders and Proposition \ref{PROP:sto}. This finishes the proof.
\end{proof}

\begin{ackno}\rm
The author would like to thank Tadahiro Oh, for suggesting this problem and his support throughout its completion. The author would also like to thank the anonymous referees for their helpful comments. The author was supported by the European Research Council (grant no.~864138 ``SingStochDispDyn'') and by the chair of probability and PDEs at EPFL.
\end{ackno}


\end{document}